\documentclass[a4paper,12pt]{amsart}

\usepackage{amsmath,amsthm,amsfonts,amssymb,stmaryrd,mathrsfs,enumitem}
\usepackage{latexsym}
\usepackage{fullpage}
\usepackage{a4,color,palatino,fancyhdr}
\usepackage{graphicx}
\usepackage{float}  
\usepackage[small, bf, margin=90pt, tableposition=bottom]{caption}
\RequirePackage{amssymb}
\RequirePackage[T1]{fontenc}
\usepackage{amscd}
\setlength{\topmargin}{0cm} \setlength{\oddsidemargin}{0cm}
\setlength{\evensidemargin}{0cm} \setlength{\textheight}{23cm}
\setlength{\textwidth}{16cm}
\usepackage{xypic}
\input{xy}
\xyoption{all}
\xyoption{poly}
\usepackage[all]{xy}
\usepackage{tikz}
\newtheorem{theorem}{Theorem}[section]
\newtheorem{proposition}[theorem]{Proposition}

\newtheorem{lemma}[theorem]{Lemma}
\newtheorem{corollary}[theorem]{Corollary}
\newtheorem{remark}{Remark}
\newtheorem{example}{Example}

\numberwithin{equation}{section}



\def\im{\mathop{\rm Im}\nolimits}

\newcommand\Z{{\mathbb{Z}}}
\newcommand\R{{\mathbb{R}}}
\newcommand\C{{\mathbb{C}}}

\begin{document}

\title{Elementary symmetric polynomials in Stanley--Reisner face ring}
\author{Zhi L\"u, Jun Ma and Yi Sun}

 \subjclass[2010]{13F55, 57R91, 57R22,  52B70.}
\thanks{Supported in part by grants from NSFC (No. 11371093, and 11431009).}
\address{School of Mathematical Sciences, Fudan University, Shanghai,
200433, P.R. China. } \email{zlu@fudan.edu.cn}
\address{School of Mathematical Sciences, Fudan University, Shanghai,
200433, P.R. China. Current address: Department of Mathematics, Capital Normal University, Beijing, 100048, P. R. China. }
\email{tonglunlun@gmail.com}
\address{School of Mathematical Sciences, Fudan University, Shanghai,
200433, P.R. China.}
\email{09210180013@fudan.edu.cn}

\date{}
\begin{abstract}
Let $P$ be a simple polytope of dimension $n$ with $m$ facets. In this paper we  pay our attention on those elementary symmetric polynomials in the
Stanley--Reisner face ring of $P$ and  study how the decomposability of the $n$-th elementary symmetric polynomial influences on the combinatorics of $P$ and the topology and geometry of toric spaces over $P$. We give  algebraic criterions of detecting the decomposability of $P$ and determining when $P$ is $n$-colorable in terms of the $n$-th elementary symmetric polynomial. In addition, we define the Stanley--Reisner {\em exterior} face ring $\mathcal{E}(K_P)$ of $P$, which is non-commutative in the case of ${\Bbb Z}$ coefficients, where $K_P$ is the boundary complex of dual of $P$. Then we obtain a criterion
for the (real) Buchstaber invariant of $P$ to be $m-n$ in terms of the $n$-th elementary symmetric polynomial in $\mathcal{E}(K_P)$.
Our results as above can directly associate with the topology and geometry of toric spaces over $P$. In particular, we show that  the decomposability of the $n$-th elementary symmetric polynomial in $\mathcal{E}(K_P)$ with ${\Bbb Z}$ coefficients  can  detect the existence of the almost complex structures of quasitoric manifolds over $P$, and if the (real) Buchstaber invariant of $P$ is $m-n$, then  there exists an essential relation between the $n$-th equivariant characteristic class  of the (real) moment-angle manifold over $P$ in $\mathcal{E}(K_P)$ and the characteristic functions of $P$.
\end{abstract}
\maketitle


\section{Introduction}

The Stanley--Reisner face ring is  a fundamental  tool in algebraic combinatorics and combinatorial commutative algebra (\cite{ms}, \cite{st}). In addition, it is well-known that the Stanley--Reisner face ring can also be realizable as the equivariant cohomology of many toric spaces over a simple polytope $P$, such as toric varieties, (quesi-)toric manifolds, small covers and (real) moment-angle manifolds.
Moreover, elementary symmetric polynomials in the Stanley--Reisner face ring can be realized as the  equivariant Chern classes or equivariant Stiefel--Whitney classes of those toric spaces. Thus, the Stanley--Reisner face ring
also plays an essential role on algebraic geometry, toric geometry and toric topology (\cite{bp1}).

\vskip .1cm

Let $K$ be an abstract simplicial complex of dimension $n-1$ on vertex set $[m]=\{1, ..., m\}$, and let $R$ be a commutative ring with unit. Then the  {\em Stanley--Reisner face ring} $R(K)$ of $K$ is defined as the quotient ring of
 the polynomial ring $R[x_1, ..., x_m]$
$$R(K)=R[x_1, ..., x_m]/\mathcal{I}_K$$
where $\mathcal{I}_P$ is the ideal generated by those square-free monomials $x_{i_1}\cdots x_{i_r}$ for which $\{i_1, ..., i_r\}\not\in K$.
Let
$$\sigma_i^K(x_1, ..., x_m)=\sum x_{j_1}\cdots x_{j_i}$$
be the {\em $i$-th elementary symmetric polynomial function} in Stanley--Reisner face ring $R(K)$, which is the image
 of
the standard $i$-th elementary symmetric polynomial function $\sigma_i(x_1, ..., x_m)$ in $R[x_1, ..., x_m]$ under the natural projection
$$p: R[x_1, ..., x_m]\longrightarrow R(K)=R[x_1, ..., x_m]/\mathcal{I}_K.$$
In general, $\sigma_i^K(x_1, ..., x_m)$ is a  deficient function in $R(K)$ since  some monomials of $\sigma_i(x_1, ..., x_m)$ in $R[x_1, ..., x_m]$ may be missing in $R(K)$. However,
 these $\sigma_i^K(x_1, ..., x_m)$ record the complete combinatorics of $K$. In fact, for two abstract simplicial complexes $K_1, K_2$ of dimension $n-1$ on $[m]$,  we can regard $R(K_1)$ and $R(K_2)$ as two quotient rings of $R[x_1, ..., x_m]$. Then it is easy to see that $K_1$ and  $K_2$ are combinatorially isomorphic if and only if there is a permutation $s$ on $\{x_1, ..., x_m\}$ such that
 $\sigma_i^{K_1}(x_1, ..., x_m)=\sigma_i^{K_2}(s(x_1), ..., s(x_m))$ for all $i$.

 \vskip .1cm

In this paper we shall investigate  the internal implications (e.g., decomposability) of those polynomials  $\sigma_i^{K}(x_1, ..., x_m)$ 
 in some different rings,  such as $R(K), R[x_1, ..., x_m]$ and the  Stanley--Reisner exterior face ring etc.  Our main purpose  is to study how the internal implications  of those polynomials  $\sigma_i^{K}(x_1, ..., x_m)$
  can produce essential influences on the combinatorics of $K$ and the topology and geometry of toric spaces.
 Here we will pay more attention on the case in which $K$ is the boundary complex $K_P$ of the dual polytope of a simple $n$-polytope $P$ with $m$ facets although our many arguments can still be carried on for the case of more general $K$. This is because in this case we can directly associate with the topology and geometry of toric spaces over $P$, and in particular, $\sigma_n^{K_P}(x_1, ..., x_m)$ completely determines the combinatorics of $K_P$.

\vskip .1cm
Given an  $n$-dimensional simple polytope $P$ with $m$ facets,
  we shall carry out our work in the following some aspects:
 \begin{enumerate}
 \item  We show that the decomposability of $P$ agrees with that of $\sigma_n^{K_P}(x_1, ..., x_m)$ in $R[x_1, ..., x_m]$ (see Theorem~\ref{main1}), where the decomposability of $P$ means whether $P$ is a product of some polytopes or not,
  and  we note that there is a natural module embedding $R(K_P)\hookrightarrow R[x_1, ..., x_m]$, so $\sigma_n^{K_P}(x_1, ..., x_m)$ in $R(K_P)$ can be regarded as a polynomial in $R[x_1, ..., x_m]$.
         This gives an algebraic criterion of detecting the decomposability of $P$ in terms of $\sigma_n^{K_P}(x_1, ..., x_m)$ in $R[x_1, ..., x_m]$. It should be pointed out that the decomposability of $\sigma_n^{K_P}(x_1, ..., x_m)$ in $R(K_P)$ is different from that of $\sigma_n^{K_P}(x_1, ..., x_m)$ in $R[x_1, ..., x_m]$. In other words, we cannot use the decomposability of $\sigma_n^{K_P}(x_1, ..., x_m)$ in $R(K_P)$
     to detect that of $P$. This work is motivated by \cite[Section 7, Problem (P2)]{lt}. Here we give an answer to  \cite[Section 7, Problem (P2)]{lt} in spite of the existence of  characteristic functions on $P$.
 \item With a combinatorial argument, we give a criterion  for $P$ to be $n$-colorable in terms of the decomposability of $\sigma_n^{K_P}(x_1, ..., x_m)$ in $R(K_P)$ (see Theorem~\ref{n-co}). More generally, for each integer $\ell\geq n$,  we can also give a  criterion for $P$ to be $\ell$-colorable in terms of $\sigma_n^{K_P}(x_1, ..., x_m)$ in $R(K_P)$ (see Theorem~\ref{l-co}).
     With a topological argument, Notbohm in~\cite{n1, n2} has given a criterion for $P$ to be $\ell$-colorable in terms of the splitting property or total characteristic class of a vector bundle.
     However, our criterion is a little bit weaker than Notbohm's one. In fact,
      Notbohm's criterion needs to involve all elementary symmetric polynomials   $\sigma_i^{K_P}(x_1, ..., x_m), i\leq n$,
      but  our criterion only needs the $n$-th elementary symmetric polynomial $\sigma_n^{K_P}(x_1, ..., x_m)$.
     \item We study the Buchstaber invariant $s_{\C}(P)$ (or $s_{\R}(P)$) of $P$, which is a combinatorial invarant, introduced by Buchstaber, and has also the geometrical meaning (\cite{bp}). Actually,  $s_{\C}(P)$ (or $s_{\R}(P)$) is related to the existence of the free actions of the maximal subtori on moment-angle manifold $\mathcal{Z}^{\C}_P$ (or real moment-angle manifold $\mathcal{Z}^{\R}_P$) over $P$.
      In a similar way to the Stanley--Reisner face ring, we introduce the  Stanley--Reisner {\em exterior} face ring
     $\mathcal{E}_{\Bbb F}(K_P)$ over $R_{\Bbb F}$ of $P$, where  ${\Bbb F}=\C$ or $\R$, and $R_{\Bbb F}$ is $\Z$ if ${\Bbb F}=\C$ and $\Z_2$ if ${\Bbb F}=\R$. This ring $\mathcal{E}_{\Bbb F}(K_P)$ is non-commutative if ${\Bbb F}=\C$.
     Then we give a necessary and sufficient condition for $s_{\Bbb F}(P)=m-n$ in terms of the decomposability of $\sigma_n^{K_P}(x_1, ..., x_m)$ in $\mathcal{E}_{\Bbb F}(K_P)$ (see Theorem~\ref{cpx} and Theorem~\ref{real}), where if ${\Bbb F}=\C$,  $\sigma_n^{K_P}(x_1, ..., x_m)$ will be an oriented polynomial associated with an orientation of $K_P$.
\end{enumerate}

Our work as above can directly associate with the topology and the geometry of toric spaces over $P$  although most of our results and proofs are combinatorial and algebraic, so that we can obtain more understandings from the viewpoints of topology and geometry. In fact, $\sigma_n^{K_P}(x_1, ..., x_m)$ in $R_{\Bbb F}(K_P)$
is exactly the $n$-th equivariant Chern class (or equivariant Stiefel--Whitney class) of $\mathcal{Z}^{\C}_P$ (or $\mathcal{Z}^{\R}_P$). Thus, $\sigma_n^{K_P}(x_1, ..., x_m)$ in $R_{\Bbb F}[x_1, ..., x_m]$ is the pullback of the
$n$-th equivariant characteristic class $\sigma_n^{K_P}(x_1, ..., x_m)$ in $R_{\Bbb F}(K_P)$ via the embedding $R_{\Bbb F}(K)\hookrightarrow R_{\Bbb F}[x_1, ..., x_m]$. We will also see that if ${\Bbb F}=\C$, the decomposability of $\sigma_n^{K_P}(x_1, ..., x_m)$ can  detect the existence of the almost complex structures of quasitoric manifolds over $P$ (see Proposition~\ref{al-com}), and in particular, we give a simple criterion for determining whether a quasitoric manifold admits an equivariant almost complex structure (see Corollary~\ref{cycle}). In addition, our necessary and sufficient condition for $s_{\Bbb F}(P)=m-n$ also implies that there exists an essential relation between the $n$-th equivariant characteristic class $\sigma_n^{K_P}(x_1, ..., x_m)$ of $\mathcal{Z}^{\C}_P$ (or  $\mathcal{Z}^{\R}_P$) in $\mathcal{E}_{\Bbb F}(K_P)$ and the characteristic functions of $P$ (see Corollary~\ref{ch-pol1} and Corollary~\ref{ch-pol2}). An application to the Buchstaber invariants of cyclic polytopes is also given.


\section{Geometric realization of the Stanley--Reisner face ring in toric topology}

\subsection{Moment-angle manifolds}
An $n$-dimensional polytope $P$ is said to be {\em simple} if each vertex of $P$ meets exactly $n$ facets.  Simple polytopes have played essential roles in the theory of  toric varieties, toric geometry and toric topology.
 Let $P$ be an $n$-dimensional simple polytope with $m$ facets $F_1, ..., F_m$. Then its dual $P^*$ is a simplicial polytope of dimension $n$, and the boundary $\partial P^*$ of $P^*$, denoted by $K_P$,  is a simplicial complex of dimension $n-1$
 on vertex set $[m]$ (corresponding to the facet set $\{F_1, ..., F_m\}$ of $P$). The complex $K_P=\partial P^*$ is also called the {\em polytopal sphere} of $P$. It is well-known that over $P$,
the moment-angle manifold $\mathcal{Z}^{\C}_P$ and  real moment-angle manifold $\mathcal{Z}^{\R}_P$ can be defined with different ways.
Following \cite[Construction 6.38]{bp}, for each simplex $\sigma$ in
$K_P$, set
$B_\sigma=\prod_{i=1}^m A_i$
such that $$A_i=\begin{cases} D^2 & \text{if } i\in \sigma\\
S^1 & \text{if } i\in [m]\setminus\sigma
\end{cases}$$
where  $D^2=\big\{z\in {\Bbb C}\big| |z|\leq 1\big\}$ is the unit disk in ${\Bbb
 C}$, and $S^1=\partial D^2$. Then one can define the moment-angle manifold $\mathcal{Z}^{\C}_P$ as the following subspace of the product
space $(D^2)^m$:
$$\mathcal{Z}^{\Bbb C}_P:=\bigcup_{\sigma\in K_P}B_\sigma\subset (D^2)^m.$$
The manifold $\mathcal{Z}^{\Bbb C}_P$ admits a natural action of $\mathbb{T}_{\Bbb C}^m=(S^1)^m$, which is the restriction to $\mathcal{Z}^{\Bbb C}_P$ of the standard $\mathbb{T}_{\Bbb C}^m$-representation on ${\Bbb C}^m$
given by $$\big((g_1,...,
g_m), (z_1, ..., z_m)\big)\longmapsto (g_1z_1, ..., g_mz_m).$$ The complex conjugation on ${\Bbb C}^m$ gives a conjugation involution on $\mathcal{Z}^{\Bbb C}_P$ whose fixed point set is exactly the real moment-angle manifold $\mathcal{Z}^{\Bbb R}_P\subset (D^1)^m\subset {\Bbb R}^m$ and admits an action of the elementary 2-group $\mathbb{T}_{\Bbb R}^m=(S^0)^m\cong {\Bbb Z}_2^m$, where $\mathbb{T}_{\Bbb R}^m$ is the fixed point set of the complex conjugation on $\mathbb{T}_{\Bbb C}^m\subset {\Bbb C}^m$. As shown in~\cite{dj}, $\mathcal{Z}^{\Bbb F}_P$ plays an important role on the study of quasitoric manifolds and small covers where ${\Bbb F}={\Bbb C}$ or ${\Bbb R}$.
We note that each $\mathcal{Z}^{\Bbb R}_P$ itself can be defined in a similar way to $\mathcal{Z}^{\Bbb C}_P$, but it can always be realized as the fixed point set of the conjugation involution on $\mathcal{Z}^{\Bbb C}_P$. However, generally both $\mathcal{Z}^{\Bbb C}_P$ and $\mathcal{Z}^{\Bbb R}_P$  have  quite differences, as shown in~\cite[Lemma 6.5]{dj}.
One also
knows from  \cite{dj} that both $\mathbb{T}_{\Bbb C}^m$-action  on
$\mathcal{Z}^{\Bbb C}_P$ and $\mathbb{T}_{\Bbb R}^m$-action  on
$\mathcal{Z}^{\Bbb R}_P$ have the same orbit space  $P$.

\subsection{Equivariant cohomology and equivariant characteristic classes of $\mathcal{Z}^{\Bbb F}_P$}
Davis and Januszkiewicz showed  in~\cite{dj} that the cohomology of the Borel construction $E\mathbb{T}_{\Bbb F}^m\times_{\mathbb{T}_{\Bbb F}^m}\mathcal{Z}^{\Bbb F}_P$ of
$\mathcal{Z}^{\Bbb F}_P$ only depends upon $P$, and it is isomorphic to the Stanley--Reisner face ring of $P$.
\begin{theorem}  [{\cite[Theorem 4.8]{dj}}] \label{face ring}  There is a ring isomorphism
$$H^*_{\mathbb{T}_{\Bbb F}^m}(\mathcal{Z}^{\Bbb F}_P; R_{\Bbb F})\cong R_{\Bbb F}(K_P)=R_{\Bbb F}[x_1, ..., x_m]/\mathcal{I}_{K_P}$$
with $\deg x_i=\dim {\Bbb F}$,
where $R_{\Bbb F}$ is $\Z$ if ${\Bbb F}=\C$ and $\Z_2$ if ${\Bbb F}=\R$.
\end{theorem}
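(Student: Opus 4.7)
The plan is to identify the Borel construction $E\mathbb{T}^m_{\mathbb{F}}\times_{\mathbb{T}^m_{\mathbb{F}}}\mathcal{Z}^{\mathbb{F}}_P$ with the Davis--Januszkiewicz space
$$DJ(K_P):=\bigcup_{\sigma\in K_P}B\mathbb{T}^\sigma_{\mathbb{F}}\subset B\mathbb{T}^m_{\mathbb{F}},$$
where $B\mathbb{T}^\sigma_{\mathbb{F}}=\prod_{i\in\sigma}B\mathbb{T}_{\mathbb{F}}$ is the coordinate subspace indexed by $\sigma$. Once this identification is in hand, the ring isomorphism should drop out from the inclusion $DJ(K_P)\hookrightarrow B\mathbb{T}^m_{\mathbb{F}}$ combined with the standard computation $H^*(B\mathbb{T}^m_{\mathbb{F}};R_{\mathbb{F}})=R_{\mathbb{F}}[x_1,\ldots,x_m]$ with $\deg x_i=\dim\mathbb{F}$.

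The first step is to produce the homotopy equivalence between the two sides using the cover $\mathcal{Z}^{\mathbb{F}}_P=\bigcup_\sigma B_\sigma$ given in the excerpt. For a fixed $\sigma$, I would compute the Borel construction of $B_\sigma=\prod_{i\in\sigma}A_i\times\prod_{j\notin\sigma}\mathbb{T}_{\mathbb{F}}$ factor by factor: the disk factors $A_i$ for $i\in\sigma$ retract onto the fixed origin and each contribute a copy of $B\mathbb{T}_{\mathbb{F}}$, while the free factors $A_j=\mathbb{T}_{\mathbb{F}}$ for $j\notin\sigma$ make $E\mathbb{T}_{\mathbb{F}}\times_{\mathbb{T}_{\mathbb{F}}}\mathbb{T}_{\mathbb{F}}$ contractible and therefore collapse out of the Borel construction. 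This yields $E\mathbb{T}^m_{\mathbb{F}}\times_{\mathbb{T}^m_{\mathbb{F}}}B_\sigma\simeq B\mathbb{T}^\sigma_{\mathbb{F}}$; compatibility with the face inclusions $B_\sigma\hookrightarrow B_\tau$ for $\sigma\subset\tau$ then lets me paste these local equivalences, after thickening to an open cover, to a global equivalence between the two homotopy colimits indexed by the face poset of $K_P$.

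The second step is the cohomology calculation. The inclusion $DJ(K_P)\hookrightarrow B\mathbb{T}^m_{\mathbb{F}}$ gives a restriction map from $R_{\mathbb{F}}[x_1,\ldots,x_m]$, and any square-free monomial $x_{i_1}\cdots x_{i_r}$ whose index set fails to lie in $K_P$ restricts to zero on every $B\mathbb{T}^\sigma_{\mathbb{F}}$, hence lies in the kernel; so the restriction map factors through $R_{\mathbb{F}}(K_P)=R_{\mathbb{F}}[x_1,\ldots,x_m]/\mathcal{I}_{K_P}$. The main obstacle is to show that the induced map $R_{\mathbb{F}}(K_P)\to H^*(DJ(K_P);R_{\mathbb{F}})$ is an isomorphism, i.e.\ that no extra relations appear. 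I would handle this by induction on the number of maximal simplices of $K_P$, gluing them in one at a time via Mayer--Vietoris, and cross-check by comparing Poincar\'e series on both sides against the $f$-vector of $K_P$. The real case $\mathbb{F}=\mathbb{R}$ with $\mathbb{Z}_2$ coefficients runs in complete parallel with the complex case because $\mathbb{Z}_2$ coefficients suppress the orientation subtleties coming from $B\mathbb{T}_{\mathbb{R}}=\mathbb{R}P^\infty$.
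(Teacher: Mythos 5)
This is a cited result --- the paper attributes it to Davis--Januszkiewicz \cite[Theorem 4.8]{dj} and gives no proof of its own --- so there is nothing in the paper to compare your argument against line by line. What you have written is the now-standard ``Davis--Januszkiewicz space'' argument (identify the Borel construction of $\mathcal{Z}^{\Bbb F}_P$ with $DJ(K_P)=\bigcup_{\sigma\in K_P}B\mathbb{T}^\sigma_{\Bbb F}$ and compute from the inclusion into $B\mathbb{T}^m_{\Bbb F}$), which is how the result is usually presented in Buchstaber--Panov. It is worth noting that this is not quite the route the original source takes: Davis and Januszkiewicz work directly with their basic construction over a panel structure and prove the face-ring computation by an explicit analysis of the associated chain complex; the homotopy-colimit description of $DJ(K)$ was crystallized later. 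Both routes are valid; yours has become the conventional one.

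As a proof sketch, the outline is sound, but two steps need firming up. First, showing that a square-free monomial with index set not in $K_P$ dies in $H^*(DJ(K_P))$ via ``it restricts to zero on each $B\mathbb{T}^\sigma_{\Bbb F}$'' presupposes that the restriction to the pieces is jointly injective, which is part of what must ultimately be proved; the cleaner move is to observe that the generators are detected on the pieces only after the colimit computation is in hand, or to produce the vanishing directly from the cofibre of $DJ(K_P)\hookrightarrow B\mathbb{T}^m_{\Bbb F}$. Second, ``induction on the number of maximal simplices via Mayer--Vietoris'' does not close without some control on how the new facet meets what has already been glued; for $K_P$ a polytopal sphere this is handled by choosing a shelling order (polytopal spheres are shellable), which you should state explicitly, or by replacing the ad hoc induction with the collapse of the limit spectral sequence for the diagram $\sigma\mapsto H^*(B\mathbb{T}^\sigma_{\Bbb F})$. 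The remark about ``cross-checking Poincar\'e series against the $f$-vector'' is a consistency check, not a substitute for either of these.
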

In~\cite{dj}, Davis and Januszkiewicz  constructed $m$ canonical line bundles $\mathbb{L}_i (i=1, ..., m)$ over $E\mathbb{T}_{\Bbb F}^m\times_{\mathbb{T}_{\Bbb F}^m}\mathcal{Z}^{\Bbb F}_P$, which are stated as follows. First, let
$\rho_i: \mathbb{T}_{\Bbb F}^m\longrightarrow \mathbb{T}_{\Bbb F}$ be the projection onto the $i$-th factor and let ${\Bbb F}(\rho_i)$ denote the corresponding
1-dimensional $\mathbb{T}_{\Bbb F}^m$-representation space. Then, define a trivial equivariant line bundle $\widetilde{\mathbb{L}}_i$ over $\mathcal{Z}^{\Bbb F}_P$ by $\widetilde{\mathbb{L}}_i={\Bbb F}(\rho_i)\times \mathcal{Z}^{\Bbb F}_P$. Finally, consider the Borel construction on $\widetilde{\mathbb{L}}_i$, we
may obtain the required line bundle
$$\mathbb{L}_i=E\mathbb{T}_{\Bbb F}^m\times_{\mathbb{T}_{\Bbb F}^m}\widetilde{\mathbb{L}}_i$$
and the first Stiefel--Whitney class $w_1(\mathbb{L}_i)=x_i$ if ${\Bbb F}=\R$, and first Chern class $c_1(\mathbb{L}_i)=x_i$ if ${\Bbb F}=\C$.
 In fact, $\mathbb{L}_i$ exactly corresponds to the facet $F_i$ of $P$. Furthermore,  Davis and Januszkiewicz showed in~\cite[Theorem 6.6]{dj} that the Borel construction $E\mathbb{T}^m_{\Bbb F}\times_{\mathbb{T}^m_{\Bbb F}}\mathcal{T}\mathcal{Z}_P^{\Bbb F}$ of the tangent bundle $\mathcal{T}\mathcal{Z}_P^{\Bbb F}$
is stably isomorphic to the Whitney sum $\mathbb{L}_1\oplus\cdots\oplus \mathbb{L}_m$ as real vector bundles.  Thus, if ${\Bbb F}=\R$,
the total equivariant Stiefel--Whitney class of $\mathcal{T}\mathcal{Z}_P^{\Bbb R}$ is
$$w^{\mathbb{T}^m_{\Bbb R}}(\mathcal{T}\mathcal{Z}_P^{\Bbb R})=\prod_{i=1}^m(1+x_i) \text{ in } H^*_{\mathbb{T}^m_{\Bbb R}}(\mathcal{Z}_P^{\Bbb R}; {\Bbb Z}_2)\cong R_{\Bbb R}[x_1, ..., x_m]/\mathcal{I}_{K_P}$$
(see also~\cite[Corollary 6.7]{dj}). If ${\Bbb F}=\C$, following the arguments of \cite[Theorem 7.3.15]{bp1},
there is the following $\mathbb{T}^m_{\C}$-equivariant decomposition by restricting the tangent bundle $\mathcal{T}\C^m$ to $\mathcal{Z}_P^{\C}$
\begin{equation}\label{bundle-iso}
\mathcal{T}\mathcal{Z}_P^{\Bbb C}\oplus \nu(i_{\mathcal{Z}})\cong \mathcal{Z}_P^{\C}\times \C^m
\end{equation}
where $i_{\mathcal{Z}}: \mathcal{Z}_P^{\C}\hookrightarrow \C^m$ is a $\mathbb{T}^m_{\C}$-equivariant embedding, and $\nu(i_{\mathcal{Z}})$ is the normal bundle of $\mathcal{Z}_P^{\C}$ in $\C^m$ which is $\mathbb{T}^m_{\C}$-equivariantly trivial. Since $\mathcal{Z}_P^{\C}\times \C^m$ is isomorphic to $\widetilde{\mathbb{L}}_1\oplus\cdots\oplus \widetilde{\mathbb{L}}_m$ as $\mathbb{T}^m_{\C}$-equivariant vector bundles,  applying the Borel construction to the bundle isomorphism~(\ref{bundle-iso}), one has that
$$\big(E\mathbb{T}_{\Bbb C}^m\times_{\mathbb{T}_{\Bbb C}^m}\mathcal{T}\mathcal{Z}_P^{\Bbb C}\big)\oplus
\big(E\mathbb{T}_{\Bbb C}^m\times_{\mathbb{T}_{\Bbb C}^m} \nu(i_{\mathcal{Z}})\big)\cong \mathcal{Z}_P^{\C}\times_{\mathbb{T}_{\C}^m} \C^m=\mathbb{L}_1\oplus\cdots\oplus \mathbb{L}_m$$
so the total equivariant Chern class of $\mathcal{T}\mathcal{Z}_P^{\Bbb C}$ is
$$c^{\mathbb{T}^m_{\Bbb C}}(\mathcal{T}\mathcal{Z}_P^{\Bbb C})=\prod_{i=1}^m(1+x_i) \text{ in } H^*_{\mathbb{T}^m_{\Bbb C}}(\mathcal{Z}_P^{\Bbb C}; {\Bbb Z})\cong R_{\Bbb C}[x_1, ..., x_m]/\mathcal{I}_{K_P}$$
and thus $w_i^{\mathbb{T}^m_{\Bbb R}}(\mathcal{T}\mathcal{Z}_P^{\Bbb R})$ and $c_i^{\mathbb{T}^m_{\Bbb C}}(\mathcal{T}\mathcal{Z}_P^{\Bbb C})$
can be written as $\sigma_i^{K_P}(x_1, ..., x_m)$.

\vskip .1cm

We note that since $H^*(B\mathbb{T}_{\Bbb F}^m; R_{\Bbb F})=R_{\Bbb F}[x_1, ..., x_m]$,  the natural projection $$p: R_{\Bbb F}[x_1, ..., x_m]\longrightarrow R_{\Bbb F}(K_P)=R_{\Bbb F}[x_1, ..., x_m]/\mathcal{I}_{K_P}$$ is actually induced by the fiberation $\pi: E\mathbb{T}_{\Bbb F}^m\times_{\mathbb{T}_{\Bbb F}^m}\mathcal{Z}^{\Bbb F}_P\longrightarrow B\mathbb{T}_{\Bbb F}^m$, so $\sigma_i^{K_P}(x_1, ..., x_m)$ is the image of
the $i$-th universal characteristic class $\sigma_i(x_1, ..., x_m)$ in $R_{\Bbb F}[x_1, ..., x_m]$ under the homomorphism $\pi^*$.
Generally, each polynomial $\overline{f}$ in $R_{\Bbb F}(K_P)$ is a coset $f+\mathcal{I}_{K_P}$ where $f\in R_{\Bbb F}[x_1, ..., x_m]$. Since
the ideal $\mathcal{I}_{K_P}$ is exactly generated by square free monomials, each coset $f+\mathcal{I}_{K_P}$ contains a unique representative that has no any monomial in $\mathcal{I}_{K_P}$. Such representative will be said to be {\em prime}. Then we have
\begin{lemma}
There is a module embedding
$e: R_{\Bbb F}(K_P)\hookrightarrow R_{\Bbb F}[x_1, ..., x_m]$ defined by mapping $\overline{f}$ to its prime representative.
\end{lemma}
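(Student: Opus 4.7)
The plan is to exploit the fact that $\mathcal{I}_{K_P}$ is a monomial ideal in order to exhibit an explicit $R_{\Bbb F}$-module complement of $\mathcal{I}_{K_P}$ inside $R_{\Bbb F}[x_1, \ldots, x_m]$, and then identify the required embedding $e$ with the section of the quotient map coming from this complement. The ``prime representative'' prescription in the statement will then be a direct unwinding of this construction.

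First I would verify the key dichotomy on monomials: a monomial $x^\alpha = x_1^{\alpha_1} \cdots x_m^{\alpha_m}$ with support $\mathrm{supp}(\alpha) = \{i : \alpha_i > 0\}$ lies in $\mathcal{I}_{K_P}$ if and only if $\mathrm{supp}(\alpha) \not\in K_P$. One direction is immediate, because if $\mathrm{supp}(\alpha) \not\in K_P$ then the square-free generator $\prod_{i \in \mathrm{supp}(\alpha)} x_i$ of $\mathcal{I}_{K_P}$ divides $x^\alpha$. The other direction uses crucially that $K_P$ is a simplicial complex, hence closed under taking subsets: if $\mathrm{supp}(\alpha) \in K_P$, then every subset of $\mathrm{supp}(\alpha)$ is also in $K_P$, so no square-free generator of $\mathcal{I}_{K_P}$ can divide $x^\alpha$. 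From this dichotomy, splitting the monomial $R_{\Bbb F}$-basis of $R_{\Bbb F}[x_1, \ldots, x_m]$ into those with support in $K_P$ and those without, one obtains an internal direct sum decomposition of $R_{\Bbb F}$-modules
\begin{equation*}
R_{\Bbb F}[x_1, \ldots, x_m] = J \oplus \mathcal{I}_{K_P},
\end{equation*}
where $J$ is the $R_{\Bbb F}$-submodule spanned by the monomials whose support is a simplex of $K_P$.

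Consequently the restriction of the projection $p$ to $J$ is an $R_{\Bbb F}$-module isomorphism $J \xrightarrow{\cong} R_{\Bbb F}(K_P)$, and I would define $e$ as the composition of its inverse with the inclusion $J \hookrightarrow R_{\Bbb F}[x_1, \ldots, x_m]$. Unwinding shows $e(\overline{f})$ is precisely the unique element of the coset $f + \mathcal{I}_{K_P}$ lying in $J$, i.e.\ the prime representative. Injectivity is then automatic: if $e(\overline{f}) = 0$, then $f \in \mathcal{I}_{K_P}$ and $\overline{f} = 0$. There is essentially no serious obstacle; the only delicate point worth noting is that $e$ is \emph{not} a ring homomorphism, since the product of two prime representatives in $R_{\Bbb F}[x_1, \ldots, x_m]$ may acquire monomials whose support is not a simplex of $K_P$, which is exactly why the lemma claims only a module embedding.
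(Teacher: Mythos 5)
Your proof is correct and follows essentially the same idea the paper sketches just before stating the lemma: because $\mathcal{I}_{K_P}$ is a monomial ideal, the monomial basis of the polynomial ring splits into those monomials lying in the ideal and those whose support is a simplex of $K_P$, so each coset has a unique prime representative and the section is a module map. Your version merely records explicitly the direct-sum decomposition $R_{\Bbb F}[x_1,\dots,x_m] = J \oplus \mathcal{I}_{K_P}$ and the support dichotomy, which the paper leaves implicit.
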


Without any confusion, we will identify $\overline{f}$ with its prime representative. With this understanding,  $R_{\Bbb F}(K_P)$ can be regarded as a submodule of $R_{\Bbb F}[x_1, ..., x_m]$, and in particular,    $\sigma_i^{K_P}(x_1, ..., x_m)$ in $R_{\Bbb F}(K_P)$ can also understood as a polynomial in $R_{\Bbb F}[x_1, ..., x_m]$.

 \subsection{Buchstaber invariant}
 In general, the action of $\mathbb{T}_{\Bbb F}^m$ on $\mathcal{Z}^{\Bbb F}_P$ is not free, but the action restricted to some sub-tori of $\mathbb{T}_{\Bbb F}^m$ may be free.
There is the maximum rank of those sub-tori that can freely on $\mathcal{Z}^{\Bbb F}_P$, denoted by $s_{\Bbb F}(P)$, which is called {\em Buchstaber invariant} in~\cite{bp}. It was known from~\cite{bp} that
\begin{proposition}
$1\leq s_{\Bbb C}(P)\leq s_{\Bbb R}(P)\leq m-n$.
\end{proposition}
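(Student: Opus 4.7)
The plan is to prove the three inequalities $1 \leq s_{\mathbb{C}}(P)$, $s_{\mathbb{C}}(P) \leq s_{\mathbb{R}}(P)$, and $s_{\mathbb{R}}(P) \leq m-n$ separately, using the explicit construction of $\mathcal{Z}^{\mathbb{F}}_P$ as a union of products $B_\sigma$ recalled in Section~2.1.

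For $1 \leq s_{\mathbb{C}}(P)$ I would exhibit a free action of the diagonal circle $S^1_{\mathrm{diag}} = \{(g,\ldots,g) : g \in S^1\} \subset \mathbb{T}^m_{\mathbb{C}}$. A point $(z_1,\ldots,z_m) \in \mathcal{Z}^{\mathbb{C}}_P$ is fixed by $(g,\ldots,g)$ only if either $g=1$ or all $z_i=0$; but every $\sigma \in K_P$ satisfies $|\sigma| \leq n < m$, so any $(z_1,\ldots,z_m) \in B_\sigma$ has $|z_i|=1$ for some $i \notin \sigma$, preventing the all-zero point from lying in $\mathcal{Z}^{\mathbb{C}}_P$. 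Hence $S^1_{\mathrm{diag}}$ acts freely.

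The middle inequality $s_{\mathbb{C}}(P) \leq s_{\mathbb{R}}(P)$ is the most conceptual step. Given a subtorus $T^k \subset \mathbb{T}^m_{\mathbb{C}}$ of rank $k = s_{\mathbb{C}}(P)$ acting freely on $\mathcal{Z}^{\mathbb{C}}_P$, I would pass to its 2-torsion subgroup $T^k[2] \cong \mathbb{Z}_2^k$. Any $(g_1,\ldots,g_m) \in T^k[2]$ satisfies $g_i^2 = 1$, so each $g_i \in \{\pm 1\}$ and therefore $T^k[2] \subset \mathbb{T}^m_{\mathbb{R}}$. Since $\mathcal{Z}^{\mathbb{R}}_P$ is the fixed set of complex conjugation and $T^k[2]$ commutes with conjugation, $T^k[2]$ preserves $\mathcal{Z}^{\mathbb{R}}_P$; and since $T^k[2] \subset T^k$ acts freely on the ambient $\mathcal{Z}^{\mathbb{C}}_P$, it acts freely on the subspace $\mathcal{Z}^{\mathbb{R}}_P$. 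This yields $s_{\mathbb{R}}(P) \geq k$.

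Finally, for $s_{\mathbb{R}}(P) \leq m-n$ I would compute an explicit isotropy group of rank $n$. Pick a vertex $v$ of $P$, corresponding to an $(n-1)$-simplex $\sigma = \{i_1,\ldots,i_n\} \in K_P$, and consider the point $x \in B_\sigma \cap \mathcal{Z}^{\mathbb{R}}_P$ with $x_i = 0$ for $i \in \sigma$ and $x_i = 1$ otherwise. Its isotropy subgroup $H_v \subset \mathbb{T}^m_{\mathbb{R}}$ consists of those $(g_1,\ldots,g_m)$ with $g_i = 1$ for $i \notin \sigma$, which is a rank-$n$ subgroup. Any subgroup $H \subset \mathbb{T}^m_{\mathbb{R}}$ acting freely must satisfy $H \cap H_v = \{1\}$, so the natural map $H \to \mathbb{T}^m_{\mathbb{R}}/H_v \cong \mathbb{Z}_2^{m-n}$ is injective, giving $\mathrm{rank}(H) \leq m-n$. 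The only delicate point I anticipate is the second step — making sure the 2-torsion argument is stated cleanly, particularly that the freeness of $T^k$ on $\mathcal{Z}^{\mathbb{C}}_P$ transfers to freeness of $T^k[2]$ on the real locus $\mathcal{Z}^{\mathbb{R}}_P$ and not just on $\mathcal{Z}^{\mathbb{C}}_P$.
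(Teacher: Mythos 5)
Your proof is correct, and all three inequalities are established by the standard arguments. The paper itself does not prove the proposition but cites Buchstaber--Panov \cite{bp} for it, and your three steps --- the free diagonal circle for $s_{\mathbb C}(P)\geq 1$, passage to the $2$-torsion subgroup $T^k[2]\subset\mathbb T^m_{\mathbb R}$ for $s_{\mathbb C}(P)\leq s_{\mathbb R}(P)$, and the rank-$n$ isotropy group at a point over a vertex for $s_{\mathbb R}(P)\leq m-n$ --- reproduce exactly that reference's argument. The one sanity check worth making explicit (which you implicitly use in step one) is that $m>n$ for any bounded simple $n$-polytope, so that no $B_\sigma$ contains the origin; and in step two, that a closed connected rank-$k$ subgroup of $(S^1)^m$ is itself a $k$-torus, so its $2$-torsion is genuinely $\mathbb Z_2^k$. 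Both are standard and your argument goes through.
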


If $\dim P=3$, by Four Color Theorem, then $s_{\Bbb C}(P)= s_{\Bbb R}(P)= m-3$. Generally, $s_{\Bbb C}(P)$ and  $s_{\Bbb R}(P)$ may be strictly less than $m-n$. This can be seen from cyclic polytopes of dimension $\geq 4$.
Buchstaber invariant can also be defined for the general simplicial complexes, and it is a combinatorial invariant. However, the calculation of Buchstaber invariant is quite difficult and complicated (\cite[Problem 7.27]{bp}). Some works on the properties of Buchstaber invariant and the calculations in some special cases have been carried on (see, e.g.~\cite{a1, a2, e1, e2, e3, fm, sy}).

\subsection{Quasitoric manifolds and small covers}\label{char}
In their seminar work~\cite{dj}, Davis and Januszkiewicz introduced and studied quasitoric manifolds and small covers. Let $P$ be an $n$-dimensional simple polytope with $m$ facets $F_1, ..., F_m$.
If Buchstaber invariant $s_{\Bbb F}(P)=m-n$, then we can choose a subtorus $H_{\Bbb F}$ of rank $m-n$ in $\mathbb{T}_{\Bbb F}^m$ which acts freely on $\mathcal{Z}_P^{\Bbb F}$, so that
the quotient space $\mathcal{Z}_P^{\Bbb F}/H_{\Bbb F}$ is a closed manifold and admits an action of $\mathbb{T}_{\Bbb F}^m/H_{\Bbb F}\cong \mathbb{T}_{\Bbb F}^n$. The space $\mathcal{Z}_P^{\Bbb F}/H_{\Bbb F}$
is called a {\em quasitoric manifold} if ${\Bbb F}=\C$, and a {\em small cover} if ${\Bbb F}=\R$. Conversely, if there is a quasitoric manifold (or small cover) over $P$, then it is easy to see that $s_{\Bbb C}(P)=m-n$
(or $s_{\Bbb R}(P)=m-n$). Thus,
\begin{proposition}
There exist quasirotic manifolds (resp. small covers) over $P$ if and only if $s_{\Bbb C}(P)=m-n$ (resp. $s_{\Bbb R}(P)=m-n$).
\end{proposition}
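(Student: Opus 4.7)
The plan is to prove the two implications separately. The backward direction is essentially already spelled out in the paragraph preceding the proposition, so the real content is the forward direction, which requires constructing a freely acting rank--$(m-n)$ subtorus of $\mathbb{T}^m_{\mathbb{F}}$ from a given quasitoric manifold or small cover.

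For the \emph{backward} direction, assume $s_{\mathbb{F}}(P)=m-n$. By definition of the Buchstaber invariant, there is a subtorus $H_{\mathbb{F}}\subset \mathbb{T}^m_{\mathbb{F}}$ of rank $m-n$ acting freely on $\mathcal{Z}^{\mathbb{F}}_P$. Since the $\mathbb{T}^m_{\mathbb{F}}$-action has orbit space $P$ of dimension $n$ (as recalled from \cite{dj}) and the residual action of $\mathbb{T}^m_{\mathbb{F}}/H_{\mathbb{F}}\cong \mathbb{T}^n_{\mathbb{F}}$ descends to $\mathcal{Z}^{\mathbb{F}}_P/H_{\mathbb{F}}$ with the same orbit space $P$, the quotient is by construction a quasitoric manifold (for $\mathbb{F}=\mathbb{C}$) or a small cover (for $\mathbb{F}=\mathbb{R}$), exactly as stated in the text.

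For the \emph{forward} direction, suppose there is a quasitoric manifold (resp.\ small cover) $M^{\mathbb{F}}$ over $P$. The idea is to use the characteristic function $\lambda\colon \{F_1,\ldots,F_m\}\to R_{\mathbb{F}}^n$ of $M^{\mathbb{F}}$ to construct the desired subtorus. The function $\lambda$ assembles into a homomorphism $\Lambda\colon \mathbb{T}^m_{\mathbb{F}}\to \mathbb{T}^n_{\mathbb{F}}$ which is surjective (this is guaranteed by the non-singularity axiom on $\lambda$, i.e.\ that the vectors $\lambda(F_{i_1}),\ldots,\lambda(F_{i_n})$ form an $R_{\mathbb{F}}$-basis whenever $F_{i_1}\cap\cdots\cap F_{i_n}$ is a vertex). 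Let $H_{\mathbb{F}}:=\ker \Lambda$; this is a subtorus of rank $m-n$. I would then identify $M^{\mathbb{F}}$ with the quotient $\mathcal{Z}^{\mathbb{F}}_P/H_{\mathbb{F}}$ (which is the standard construction from \cite{dj}) and conclude that $H_{\mathbb{F}}$ acts freely on $\mathcal{Z}^{\mathbb{F}}_P$, giving $s_{\mathbb{F}}(P)\geq m-n$; combined with the upper bound $s_{\mathbb{F}}(P)\leq m-n$ already quoted from Proposition in the text, this yields equality.

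The main obstacle is verifying that $H_{\mathbb{F}}=\ker \Lambda$ really acts freely on all of $\mathcal{Z}^{\mathbb{F}}_P$, not just generically. This reduces to a check at each stratum $B_{\sigma}$: for a simplex $\sigma=\{i_1,\ldots,i_k\}\in K_P$, the isotropy subgroup of a point in $B_{\sigma}$ under $\mathbb{T}^m_{\mathbb{F}}$ is the coordinate subtorus $\mathbb{T}^{\sigma}_{\mathbb{F}}$ spanned by the indices in $\sigma$, and freeness of $H_{\mathbb{F}}$ is equivalent to $H_{\mathbb{F}}\cap \mathbb{T}^{\sigma}_{\mathbb{F}}=\{1\}$ for every $\sigma\in K_P$. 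This intersection condition is precisely the linear independence axiom imposed on the characteristic function $\lambda$, which is exactly what makes $M^{\mathbb{F}}$ a quasitoric manifold or small cover rather than a more singular quotient. Once this freeness is checked simplex by simplex, the equivariant identification $M^{\mathbb{F}}\cong \mathcal{Z}^{\mathbb{F}}_P/H_{\mathbb{F}}$ follows from the uniqueness of the Davis--Januszkiewicz model over $P$ with prescribed characteristic data, and the proposition is complete.
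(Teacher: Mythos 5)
Your proposal follows essentially the same line of reasoning as the paper: the paper does not give a formal proof but embeds the argument in the preceding paragraph, where the "if" direction is the Davis--Januszkiewicz quotient construction $\mathcal{Z}_P^{\mathbb{F}}/H_{\mathbb{F}}$ and the "only if" direction is stated as "easy to see," referring implicitly to the standard fact that a characteristic function $\lambda$ determines a freely acting subtorus $\ker\Lambda$ of rank $m-n$. Your writeup merely supplies the routine details (the isotropy-group check $H_{\mathbb{F}}\cap\mathbb{T}^\sigma_{\mathbb{F}}=\{1\}$, which is equivalent to the nonsingularity axiom on $\lambda$, and the identification $M^{\mathbb{F}}\cong\mathcal{Z}_P^{\mathbb{F}}/\ker\Lambda$); it is correct and not a genuinely different route.
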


Let $\pi_{\Bbb F}: M_{\Bbb F}\longrightarrow P$ be a quasitoric manifold or a small cover over $P$, where $\dim M_{\Bbb F}=\dim {\Bbb F}\cdot n$ and $ M_{\Bbb F}$ admits an action of $\mathbb{T}_{\Bbb F}^n$. Davis and Januszkiewicz showed  in~\cite{dj} that the Stanley--Reisner face ring can also be realizable as the equivariant cohomology of $M_{\Bbb F}$. 
\begin{theorem}[{\cite[Theorem 4.8]{dj}}]
There is a ring isomorphism
$$H^*_{\mathbb{T}_{\Bbb F}^n}(M_{\Bbb F};R_{\Bbb F})\cong R_{\Bbb F}(K_P)=R_{\Bbb F}[x_1, ..., x_m]/\mathcal{I}_{K_P}$$
with $\deg x_i=\dim {\Bbb F}$,
where $R_{\Bbb F}$ is $\Z$ if ${\Bbb F}=\C$ and $\Z_2$ if ${\Bbb F}=\R$.
 \end{theorem}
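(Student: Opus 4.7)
The plan is to reduce this theorem to the version already quoted in Section~2.2 (Theorem~\ref{face ring}, the equivariant cohomology of $\mathcal{Z}_P^{\Bbb F}$) by comparing two Borel constructions. Write $T=\mathbb{T}_{\Bbb F}^m$ and $H=H_{\Bbb F}$, so that $T/H\cong \mathbb{T}_{\Bbb F}^n$ and $M_{\Bbb F}=\mathcal{Z}_P^{\Bbb F}/H$, where by hypothesis $s_{\Bbb F}(P)=m-n$ and $H$ acts freely on $\mathcal{Z}_P^{\Bbb F}$. The goal is to produce a natural homotopy equivalence
$$ET\times_T\mathcal{Z}_P^{\Bbb F}\;\simeq\;E(T/H)\times_{T/H}M_{\Bbb F},$$
after which Theorem~\ref{face ring} will furnish the claimed ring isomorphism.

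First I would produce this equivalence. The projection $ET\times\mathcal{Z}_P^{\Bbb F}\longrightarrow\mathcal{Z}_P^{\Bbb F}$ is $H$-equivariant with $H$ acting diagonally on the source and freely on the target, so passing to $H$-orbits yields a map $q\colon ET\times_H\mathcal{Z}_P^{\Bbb F}\longrightarrow M_{\Bbb F}$; this is a fiber bundle with contractible fiber $ET$, hence a weak homotopy equivalence. Both sides carry a residual $T/H$-action with respect to which $q$ is equivariant, and the action on the source is free because the original $T$-action on $ET$ is free. Consequently $ET\times_H\mathcal{Z}_P^{\Bbb F}$ is a model for the total space of a principal $T/H$-bundle over its $(T/H)$-quotient, which equals $ET\times_T\mathcal{Z}_P^{\Bbb F}$. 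Applying the $(T/H)$-Borel construction to the $(T/H)$-equivariant equivalence $q$ then yields the claimed homotopy equivalence.

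Second, taking cohomology with $R_{\Bbb F}$ coefficients and invoking Theorem~\ref{face ring} gives the chain of ring isomorphisms
$$H^*_{\mathbb{T}_{\Bbb F}^n}(M_{\Bbb F};R_{\Bbb F})\;\cong\;H^*_{\mathbb{T}_{\Bbb F}^m}(\mathcal{Z}_P^{\Bbb F};R_{\Bbb F})\;\cong\;R_{\Bbb F}(K_P),$$
with $\deg x_i=\dim{\Bbb F}$. Naturality of $c_1$ (resp.\ $w_1$) of the canonical line bundles $\mathbb{L}_i$ pulled back along the quotient identifies the generators on the two sides, so the isomorphism is one of graded rings, not merely graded modules.

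The main obstacle is bookkeeping for the equivariance and the freeness in the comparison step. One must choose a splitting $T\cong H\times T/H$, which is available because $T$ is either a compact torus or an elementary abelian $2$-group and in either case $H$ is a direct summand, to make the residual $T/H$-action on $ET\times_H\mathcal{Z}_P^{\Bbb F}$ well-defined and free, and to verify that $q$ is genuinely $(T/H)$-equivariant. Beyond this, everything reduces to formal manipulation of fiber bundles and Borel constructions, and the ring-theoretic identification of the generators follows from the functoriality of equivariant characteristic classes already exploited in the previous subsection.
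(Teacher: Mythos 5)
The paper does not prove this statement: it is quoted verbatim from Davis and Januszkiewicz as \cite[Theorem 4.8]{dj}, just like the earlier incarnation for $\mathcal{Z}_P^{\Bbb F}$, and is used as an imported black box. So there is no in-text proof to compare against.

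That said, your reduction is the standard and correct way to derive the $M_{\Bbb F}$-version from the $\mathcal{Z}_P^{\Bbb F}$-version, and the argument is sound. The key chain is: $q\colon ET\times_H\mathcal{Z}_P^{\Bbb F}\to M_{\Bbb F}$ is a fiber bundle with contractible fiber $ET$ (using that $H$ acts freely on $\mathcal{Z}_P^{\Bbb F}$), hence a weak equivalence; it is $(T/H)$-equivariant; the residual $(T/H)$-action on $ET\times_H\mathcal{Z}_P^{\Bbb F}$ is free because $T$ acts freely on $ET$, and its quotient is $ET\times_T\mathcal{Z}_P^{\Bbb F}$; so applying the $(T/H)$-Borel construction to $q$ and using freeness gives $ET\times_T\mathcal{Z}_P^{\Bbb F}\simeq E(T/H)\times_{T/H}M_{\Bbb F}$, at which point the $\mathcal{Z}_P^{\Bbb F}$-version of the theorem gives the ring isomorphism. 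Two minor remarks: (i) you do not actually need a chosen splitting $T\cong H\times T/H$ for the residual action to be well defined, only that $H$ is normal in $T$, which is automatic since $T$ is abelian (the splitting is, however, genuinely available in both the toral and the elementary abelian case, since $T/H$ is by hypothesis a torus $\mathbb{T}_{\Bbb F}^n$, so $H$ is a direct summand); (ii) the claim that the generators match across the isomorphism is best expressed by observing that the line bundles $\mathbb{L}_i$ on $ET\times_T\mathcal{Z}_P^{\Bbb F}$ and the corresponding canonical line bundles on $E(T/H)\times_{T/H}M_{\Bbb F}$ are pulled back from each other along $q$, which is exactly the naturality you invoke. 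No gap.
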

 In addition, as shown in~\cite{dj}, the action of $\mathbb{T}_{\Bbb F}^n$ on $M_{\Bbb F}$ gives an essential information on $P$ via $\pi_{\Bbb F}$, which is just the {\em characteristic function}
$$\lambda: \mathcal{F}(P)=\{F_1, ..., F_m\}\longrightarrow R_{\Bbb F}^n$$
such that for each vertex $v$ of $P$ (so there are exactly $n$ facets, say $F_{i_1}, ..., F_{i_n}$, such that $v=F_{i_1}\cap\cdots\cap F_{i_n}$), $\lambda(F_{i_1}), ..., \lambda(F_{i_n})$ form a basis of
$R_{\Bbb F}^n$.  The characteristic function
$\lambda$ can naturally be spanned into a linear map $R_{\Bbb F}^m\longrightarrow R_{\Bbb F}^n$, also denoted by $\lambda$. Fixing an ordering of all facets of $P$, say $F_1, ..., F_m$, then $\lambda$ determines a unique
$(n\times m)$-matrix $\Lambda=(\lambda_{ij})$. As shown in \cite{dj}, the transpose of this matrix $\Lambda=(\lambda_{ij})$ is actually identified with the map $p^*: H^{\dim {\Bbb F}}(B\mathbb{T}_{\Bbb F}^n;R_{\Bbb F})\longrightarrow
H^{\dim {\Bbb F}}_{\mathbb{T}_{\Bbb F}^n}(M_{\Bbb F};R_{\Bbb F})$, where $p^*$ is induced by the fibration $p: E\mathbb{T}_{\Bbb F}^n\times_{\mathbb{T}^n_{\Bbb F}}M_{\Bbb F}\longrightarrow B\mathbb{T}^n_{\Bbb F}$. Thus, the characteristic function
$\lambda$ can be regarded as a sequence $$(\lambda_1, ..., \lambda_n)$$ (still denoted by $\lambda$) in $H^*_{\mathbb{T}_{\Bbb F}^n}(M_{\Bbb F};R_{\Bbb F})\cong R_{\Bbb F}[x_1, ..., x_m]/\mathcal{I}_{K_P}$,
where $\lambda_i=\lambda_{i1}x_1+\cdots+\lambda_{im}x_m$.  The sequence $\lambda=(\lambda_1, ..., \lambda_n)$ determines an ideal $J_\lambda=\langle\lambda_1, ..., \lambda_n\rangle$ of $R_{\Bbb F}(K_P)$. Then
\begin{theorem} [{\cite[Theorem 4.14]{dj}}]
The cohomology of $M_{\Bbb F}$ is the quotient ring of $R_{\Bbb F}(K_P)$ by $J_\lambda$. Namely
$$H^*(M_{\Bbb F};R_{\Bbb F})\cong R_{\Bbb F}(K_P)/J_\lambda.$$
\end{theorem}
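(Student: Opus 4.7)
\medskip

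\noindent\textbf{Proof proposal.} The plan is to deduce the statement from the equivariant cohomology ring calculation already stated in the excerpt, by comparing the ordinary cohomology of $M_{\Bbb F}$ with its Borel equivariant cohomology through the fibration
$$M_{\Bbb F}\longrightarrow E\mathbb{T}_{\Bbb F}^n\times_{\mathbb{T}_{\Bbb F}^n}M_{\Bbb F}\stackrel{p}{\longrightarrow} B\mathbb{T}_{\Bbb F}^n.$$
First I would verify that the Borel construction $E\mathbb{T}_{\Bbb F}^n\times_{\mathbb{T}_{\Bbb F}^n}M_{\Bbb F}$ is indeed homotopy equivalent to $E\mathbb{T}_{\Bbb F}^m\times_{\mathbb{T}_{\Bbb F}^m}\mathcal{Z}_P^{\Bbb F}$, using that $H_{\Bbb F}=\ker\lambda\subset\mathbb{T}_{\Bbb F}^m$ acts freely on $\mathcal{Z}_P^{\Bbb F}$ and $\mathbb{T}_{\Bbb F}^n=\mathbb{T}_{\Bbb F}^m/H_{\Bbb F}$; combined with Theorem~\ref{face ring} this gives the ring isomorphism $H^*_{\mathbb{T}_{\Bbb F}^n}(M_{\Bbb F};R_{\Bbb F})\cong R_{\Bbb F}(K_P)$ already cited.

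The next step is to establish \emph{equivariant formality}, i.e.\ that $H^*_{\mathbb{T}_{\Bbb F}^n}(M_{\Bbb F};R_{\Bbb F})$ is a free module over $H^*(B\mathbb{T}_{\Bbb F}^n;R_{\Bbb F})=R_{\Bbb F}[y_1,\dots,y_n]$ via $p^*$, and that the Serre spectral sequence of the above fibration collapses at $E_2$. The standard way to do this is via a perfect Morse-theoretic argument on $M_{\Bbb F}$: a generic linear functional on $P$ pulls back along $\pi_{\Bbb F}$ to a function on $M_{\Bbb F}$ whose critical points are exactly the preimages of the vertices of $P$, and with indices (respectively, mod-$2$ indices when ${\Bbb F}=\R$) all of the parity compatible with $\dim{\Bbb F}$. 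This yields a CW decomposition of $M_{\Bbb F}$ with cells only in degrees that are multiples of $\dim{\Bbb F}$, so $H^*(M_{\Bbb F};R_{\Bbb F})$ is free and concentrated in the appropriate degrees, from which both equivariant formality and the collapse of the Serre spectral sequence follow by a direct dimension count (the ranks of $H^*_{\mathbb{T}_{\Bbb F}^n}(M_{\Bbb F};R_{\Bbb F})$ as a module over $R_{\Bbb F}[y_1,\dots,y_n]$ matches the rank of $H^*(M_{\Bbb F};R_{\Bbb F})$).

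Once equivariant formality is in hand, the Leray--Hirsch theorem (or equivalently the edge-homomorphism analysis of the collapsed spectral sequence) yields
$$H^*(M_{\Bbb F};R_{\Bbb F})\cong H^*_{\mathbb{T}_{\Bbb F}^n}(M_{\Bbb F};R_{\Bbb F})\big/\bigl(p^*H^{>0}(B\mathbb{T}_{\Bbb F}^n;R_{\Bbb F})\bigr).$$
It then remains to identify the image of $p^*$ with the ideal $J_\lambda$. This is exactly the content of the characteristic-function discussion preceding the theorem: under the isomorphism $H^*_{\mathbb{T}_{\Bbb F}^n}(M_{\Bbb F};R_{\Bbb F})\cong R_{\Bbb F}(K_P)$, the transpose of the matrix $\Lambda=(\lambda_{ij})$ realizes the map $p^*\colon H^{\dim{\Bbb F}}(B\mathbb{T}_{\Bbb F}^n;R_{\Bbb F})\to R_{\Bbb F}(K_P)$ by sending the generators $y_i$ to $\lambda_i=\sum_j\lambda_{ij}x_j$. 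Hence $p^*H^{>0}(B\mathbb{T}_{\Bbb F}^n;R_{\Bbb F})$ generates precisely $J_\lambda=\langle\lambda_1,\dots,\lambda_n\rangle$, giving the asserted isomorphism.

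The step I expect to be the main obstacle is the verification of equivariant formality, in particular producing the perfect Morse function on $M_{\Bbb F}$ and checking that all relevant Morse indices have the correct parity (or, in the real case with $\Z_2$-coefficients, that the corresponding cell attachments yield a free $R_{\Bbb F}[y_1,\dots,y_n]$-module structure on the equivariant cohomology). All remaining steps are formal consequences of equivariant formality together with the already-established identifications of $H^*_{\mathbb{T}_{\Bbb F}^n}(M_{\Bbb F};R_{\Bbb F})$ with $R_{\Bbb F}(K_P)$ and of $p^*$ with the characteristic function $\lambda$.
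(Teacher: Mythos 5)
The paper does not prove this theorem; it cites it directly from Davis--Januszkiewicz \cite[Theorem 4.14]{dj} without supplying its own argument, so there is no in-paper proof to compare against. Your proposal is a correct reconstruction of the standard argument in \cite{dj}: identify $E\mathbb{T}_{\Bbb F}^n\times_{\mathbb{T}_{\Bbb F}^n}M_{\Bbb F}$ with $E\mathbb{T}_{\Bbb F}^m\times_{\mathbb{T}_{\Bbb F}^m}\mathcal{Z}_P^{\Bbb F}$ via the free $H_{\Bbb F}$-action, establish equivariant formality through the perfect (Bott--)Morse-type cell decomposition from a generic height function on $P$ (with the collapse of the Serre spectral sequence forced in the $\Z_2$ small-cover case by the $h$-vector count rather than by cell-degree parity alone, which you correctly flag), then apply Leray--Hirsch and identify $p^*H^{>0}(B\mathbb{T}_{\Bbb F}^n;R_{\Bbb F})$ with $J_\lambda$ through the characteristic matrix $\Lambda$. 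This matches the reasoning in \cite{dj}.
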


\begin{remark}
As shown in~\cite{bpr}, the stably complex structure on a quasitoric manifold $M_{\Bbb C}$ depends upon the choice of omniorientations. For more details, see~\cite{bpr}.
\end{remark}

\section{Decomposability of simple polytopes}

\subsection{Abstract simplicial complexes and their joins}
An {\em abstract simplicial complex} $K$ on a finite set $S$ is a collection in the power set $2^S$ such that
 for each $a\in K$, any subset (including empty set) of $a$ still belongs to $K$. Each $a$ in $K$ is called a simplex and has dimension $|a|-1$, where
 $|a|$ is the cardinality of $a$.  The dimension of $K$ is defined as $\max\limits_{a\in K}\{\dim a\}$.

\vskip .1cm
 It is well-known (cf. \cite{bp}) that up to combinatorial equivalence,  each finite abstract simplicial complex can be realized as a unique geometric simplicial complex; and conversely, each finite geometric simplicial complex gives a unique abstract simplicial complex. Thus, we may identify finite abstract simplicial complexes with finite geometric simplicial complexes.
 Throughout the following,  a simplicial complex will mean a finite abstract simplicial complex or a finite geometric simplicial complex.
\vskip .1cm

 Let $K_1$ and $K_2$ be two  simplicial complexes on finite sets $S_1$ and $S_2$, respectively. Then the {\em join} of $K_1$ and $K_2$ is an abstract simplicial
 complex
 $$K_1\ast K_2=\{a\cup b\in 2^{S_1\cup S_2}|a\in K_1, b\in K_2\}$$
 on the set $S_1\cup S_2$.

  \subsection{Polynomials of simplicial complexes} Let $K$ be a simplicial complex on the set $S$ with $|S|=m$. Without the loss of generality, assume that $K$ is a simplicial complex on $[m]=\{1, ..., m\}$.
 Now consider the commutative polynomial ring $R[x_1, ..., x_m]$ where $R$ is a commutative ring with unit and  the $x_i$ are indeterminants of same degree. Regarded $K$ as a poset with respect to the inclusion, for each maximal element $a=\{i_1, ..., i_r\}$ in $K$,
 we define a monomial $x_{i_1}\cdots x_{i_r}$, denoted by $m_a$. Then we obtain a square-free polynomial
 $$f_K=\sum_am_a$$
 where $a$ runs over all maximal elements in $K$. We call $f_K$ the {\em polynomial of $K$}.
\vskip .1cm
 If $K$ is the boundary complex $K_P$ of the dual polytope of an $n$-dimensional simple polytope $P$ with $m$ facets,
 then $f_{K_P}$ is a square-free homogeneous polynomial in $R[x_1, ..., x_m]$, and it is exactly the $n$-th elementary symmetric function $\sigma_n^{K_P}(x_1, ..., x_m)$ in $R_{\Bbb F}(K_P)\hookrightarrow R_{\Bbb F}[x_1, ..., x_m]$.

\vskip .1cm

 A polynomial $f$ in $R[x_1, ..., x_m]$ is said to be {\em nice} if the coefficients of all monomials of $f$ are 1 and each monomial of $f$ is not a factor of other monomials of $f$. Let $f=\sum x_{i_1}\cdots x_{i_r}$ be a nice polynomial in  $R[x_1, ..., x_m]$. Clearly each monomial $x_{i_1}\cdots x_{i_r}$ determines a subset $\{i_1, ..., i_r\}$ of $[m]$, called the {\em monomial subset} of $x_{i_1}\cdots x_{i_r}$. Then we obtain a simplicial complex $K_f$ with all monomial subsets as its maximal elements. This gives

 \begin{lemma}
All simplicial complexes on $[m]$ bijectively correspond to all nice polynomials in $R[x_1, ..., x_m]$.
 \end{lemma}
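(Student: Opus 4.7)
The plan is to exhibit explicit mutually inverse maps between simplicial complexes on $[m]$ and nice polynomials in $R[x_1,\ldots,x_m]$. In one direction, send a simplicial complex $K$ to the polynomial $f_K=\sum_a m_a$ defined above, where $a$ ranges over the maximal simplices of $K$. In the other direction, send a nice polynomial $f=\sum x_{i_1}\cdots x_{i_r}$ to the simplicial complex $K_f$ whose maximal faces are the monomial subsets appearing in $f$, together with all of their subsets (including the empty set). Both constructions are already introduced in the text, so my main job is to confirm that each lands in the correct target and that composing them in either order yields the identity.

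First I would verify that $f_K$ is always a nice polynomial. Distinct maximal simplices of $K$ are distinct subsets of $[m]$, and therefore correspond to distinct square-free monomials, so every monomial of $f_K$ appears with coefficient $1$. Furthermore, two maximal simplices of $K$ are incomparable under inclusion (otherwise the smaller one would not be maximal), and this is exactly the statement that no monomial $m_a$ of $f_K$ divides another; hence $f_K$ is nice. Dually, I would verify that $K_f$ is a genuine simplicial complex: by construction it is closed under taking subsets, and the antichain clause in the definition of \emph{nice} (no monomial divides another) guarantees that the monomial subsets of $f$ are precisely the inclusion-maximal elements of $K_f$, rather than getting absorbed into larger faces.

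Finally I would check that the two constructions are mutually inverse. Starting from a simplicial complex $K$, the maximal faces of $K_{f_K}$ are the monomial subsets of $f_K$, which by definition are exactly the maximal faces of $K$; since both complexes are determined by their maximal faces under inclusion, $K_{f_K}=K$. Starting from a nice polynomial $f$, the polynomial $f_{K_f}$ sums the monomials indexed by the maximal faces of $K_f$, which are the monomial subsets of $f$, yielding $f_{K_f}=f$ (since all coefficients of $f$ are $1$). I do not anticipate any genuine obstacle here; the content of the lemma is essentially that both simplicial complexes and nice polynomials on $[m]$ are two equivalent ways to record an antichain of subsets of $[m]$ under inclusion, and the bookkeeping needed to track this equivalence consists only of matching the two antichain conditions. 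The only mildly delicate point to state cleanly is the boundary case where $K$ consists solely of the empty simplex, corresponding to the nice polynomial $f=1$.
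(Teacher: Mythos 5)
Your proof is correct and coincides with the argument the paper evidently has in mind: the lemma is stated there without proof, treating the constructions $K \mapsto f_K$ and $f \mapsto K_f$ as manifestly mutually inverse, and your verification that each lands in the intended target (maximal simplices form an antichain $\Leftrightarrow$ no monomial divides another) and that they compose to the identity is exactly the bookkeeping the authors leave to the reader. One small point worth stating explicitly, which you assume tacitly: the correspondence only makes sense once ``nice'' is read as implicitly requiring square-free monomials (so that ``monomial subsets'' are well-defined), which is clearly what the authors intend even though their stated definition does not literally exclude something like $x_1^2$.
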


 \begin{proposition}\label{s-poly}
 Let $K$ be a simplicial complex on $[m]$. Then $K$ is a join of two simplicial complexes if and only if $f_K$ is a product of two  polynomials in $R[x_1, ..., x_m]$.
 \end{proposition}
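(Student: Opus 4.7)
The plan is to handle each direction separately. The forward implication is a direct computation: if $K = K_1 * K_2$ with $K_i$ a simplicial complex on $S_i$ and $S_1 \cap S_2 = \emptyset$, then every maximal face of $K$ has the form $a \cup b$ with $a$ maximal in $K_1$ and $b$ maximal in $K_2$; disjointness of $S_1, S_2$ gives $m_{a \cup b} = m_a m_b$, and summing over all such pairs yields $f_K = f_{K_1} f_{K_2}$.

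For the converse, assume a nontrivial factorization $f_K = g h$ in $R[x_1, \ldots, x_m]$, with $R$ an integral domain (the coefficient rings $\mathbb{Z}$ and $\mathbb{Z}_2$ used in the paper both satisfy this). The crucial step is to show that the variable supports of $g$ and $h$ are disjoint. For each $i$, write $g = \alpha_i + x_i \beta_i$ and $h = \gamma_i + x_i \delta_i$ with $\alpha_i, \beta_i, \gamma_i, \delta_i$ free of $x_i$. Since every maximal face of $K$ is a set, $f_K$ is square-free in $x_i$, so the $x_i^2$-coefficient of $gh$, which equals $\beta_i \delta_i$, must vanish. Integrality then forces $\beta_i = 0$ or $\delta_i = 0$, so $x_i$ occurs in at most one of $g, h$. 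This partitions the variables involved into disjoint supports $S_1$ and $S_2$.

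With this splitting in hand, distinct monomial pairs $(u, v)$ of $g$ and $h$ produce distinct products $uv$ without cancellation, so the coefficient of each $uv$ in $gh$ is exactly the product of the coefficients of $u$ and $v$. Matching against $f_K$, whose monomials all have coefficient $1$, gives that all such products equal $1$; hence after absorbing a unit every coefficient of $g$ and $h$ is $1$, and each monomial is square-free. If two monomials $u_1, u_2$ of $g$ satisfied $u_1 \mid u_2$, then for any monomial $v$ of $h$ the monomial $u_1 v$ would properly divide $u_2 v$ in $f_K$, contradicting that the maximal faces of $K$ form an antichain; the same holds for $h$. By the preceding lemma, $g$ and $h$ therefore equal $f_{K_1}$ and $f_{K_2}$ for unique simplicial complexes $K_1$ on $S_1$ and $K_2$ on $S_2$. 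Finally, the maximal faces of $K_1 * K_2$ are exactly the disjoint unions $a \cup b$, matching the monomials of $f_{K_1} f_{K_2} = f_K$, so $K = K_1 * K_2$.

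The main obstacle is the variable-separation step. Once the supports of $g$ and $h$ are known to be disjoint, everything downstream is essentially bookkeeping about how products behave on disjoint variable sets. The separation itself is what forces the use of square-freeness of $f_K$ together with the integrality of $R$, and is the place where any delicate point about the coefficient ring or about what counts as a \emph{nontrivial} factorization must be addressed.
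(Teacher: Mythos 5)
Your proof follows the same route as the paper's much terser argument---direct computation in the forward direction, and in the converse establishing that the two factors are nice polynomials with disjoint variable supports, which the paper asserts in one sentence---and usefully supplies the details the paper leaves implicit, in particular the role of square-freeness and integrality in separating variables. One small gap: writing $g=\alpha_i+x_i\beta_i$ with $\beta_i$ free of $x_i$ presupposes $\deg_{x_i}g\le 1$, which has not yet been established at that point; the cleanest repair is to observe that over an integral domain $\deg_{x_i}(gh)=\deg_{x_i}g+\deg_{x_i}h\le 1$, forcing one of the factors to be free of $x_i$ outright, after which your decomposition (and everything downstream) is automatic. Your closing caveats---that $R$ should be an integral domain and that ``product of two polynomials'' must be read as a nontrivial (non-unit) factorization---are both correct and are not addressed in the paper, which states the proposition for an arbitrary commutative ring with unit.
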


 \begin{proof}
 If $K=K_1\ast K_2$, then it is easy to see that $f_K=f_{K_1}f_{K_2}$. Conversely, if $f=f_1f_2$, since $f$ is square free, both $f_1$ and $f_2$ are square free, too. Moreover, both $f_1$ and $f_2$ are nice, and $K_{f_1}$, $K_{f_2}$ share no common vertices.
  Then we see easily that $K_f=K_{f_1}\ast K_{f_2}$.
 \end{proof}

 \subsection{The decomposability of $P$} Now let us discuss the  decomposability of a simple polytope $P$.

 \begin{lemma}[\cite{bp}]\label{formula}
Let $P_1$ and $P_2$ be  simple polytopes.  Then
$$K_{P_1}*K_{P_2}=K_{P_1\times P_2}.$$
\end{lemma}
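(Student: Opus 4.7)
The plan is to verify the equality $K_{P_1 \times P_2} = K_{P_1} * K_{P_2}$ by matching vertex sets and then matching the collections of simplices on either side. The key combinatorial input is the well-known fact that if $P_1$ and $P_2$ are simple polytopes of dimensions $n_1$ and $n_2$ with $m_1$ and $m_2$ facets respectively, then $P_1 \times P_2$ is a simple polytope of dimension $n_1+n_2$ whose facet set is exactly the disjoint union of $\{F_i \times P_2\}_{i=1}^{m_1}$ and $\{P_1 \times G_j\}_{j=1}^{m_2}$, where $F_i$ and $G_j$ denote the facets of $P_1$ and $P_2$ respectively. This yields a canonical bijection between the vertex set of $K_{P_1 \times P_2}$ and $[m_1] \sqcup [m_2]$, which is also the vertex set of $K_{P_1} * K_{P_2}$.

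With this identification in hand, I would then compare simplices directly from the definition of the polytopal sphere $K_P = \partial P^*$: a subset of facets of $P$ is a simplex of $K_P$ exactly when those facets have a nonempty common intersection (equivalently, correspond to a face of $P$). Writing an arbitrary subset of $[m_1] \sqcup [m_2]$ as $\sigma = \alpha \sqcup \beta$ with $\alpha \subseteq [m_1]$ and $\beta \subseteq [m_2]$, the corresponding intersection of facets in $P_1 \times P_2$ is
$$\bigcap_{i \in \alpha}(F_i \times P_2) \;\cap\; \bigcap_{j \in \beta}(P_1 \times G_j) \;=\; \Bigl(\bigcap_{i \in \alpha} F_i\Bigr) \times \Bigl(\bigcap_{j \in \beta} G_j\Bigr).$$
Since a Cartesian product is nonempty iff each factor is nonempty, this intersection is a nonempty face of $P_1 \times P_2$ iff $\alpha \in K_{P_1}$ and $\beta \in K_{P_2}$ (with the convention that the empty set is a simplex of every complex). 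This is precisely the defining condition for $\sigma = \alpha \sqcup \beta$ to lie in the join $K_{P_1} * K_{P_2}$, so the two complexes agree as subsets of $2^{[m_1]\sqcup [m_2]}$.

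The argument is really just a combinatorial unpacking of definitions, and the only mild subtlety is the convention about the empty simplex on either side — this is what ensures that pure $K_{P_1}$-simplices (when $\beta = \emptyset$) and pure $K_{P_2}$-simplices are correctly captured on both sides of the claimed equality. I do not anticipate any genuine obstacle beyond this bookkeeping, and the statement then feeds directly into the proof of Theorem~\ref{main1} via Proposition~\ref{s-poly}.
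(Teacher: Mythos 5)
The paper does not prove this lemma; it simply cites it to Buchstaber--Panov \cite{bp}. Your direct argument is correct and is the standard one: once you record that the facets of $P_1\times P_2$ are precisely $F_i\times P_2$ and $P_1\times G_j$, the distributivity of Cartesian product over intersection reduces ``nonempty common intersection in $P_1\times P_2$'' to the conjunction of the two analogous conditions in $P_1$ and $P_2$, which is exactly the membership criterion for the join. The one convention you flag --- allowing $\alpha$ or $\beta$ to be empty --- is indeed what makes the pure $K_{P_1}$- and $K_{P_2}$-simplices appear on both sides, and it is consistent with the paper's own description of $K_P$ (a simplex is a set of facets with nonempty common intersection, the empty set included). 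No gap.
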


\begin{lemma}\label{N}
If $K$ is a polytopal sphere, then the link of any vertex of $K$ is also a polytopal sphere.
\end{lemma}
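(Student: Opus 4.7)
The plan is to exploit the duality between simple and simplicial polytopes used throughout the paper. Since $K$ is a polytopal sphere, I may write $K=K_P$ for some simple $n$-polytope $P$ with facets $F_1,\dots,F_m$, where the vertices of $K_P$ correspond bijectively to the facets of $P$. Fix a vertex $\{i\}$ of $K_P$; I aim to exhibit a combinatorial isomorphism
\[
\mathrm{lk}_{K_P}(\{i\}) \ \cong \ K_{F_i},
\]
where $F_i$ is viewed as a simple $(n-1)$-polytope in its own right. Because $K_{F_i}$ is by definition a polytopal sphere, this yields the conclusion.

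First I would check the preliminary fact that $F_i$ is indeed a simple polytope of dimension $n-1$: around any vertex $v\in F_i\subset P$, exactly $n$ facets of $P$ meet (one of which is $F_i$), so $v$ lies in exactly $n-1$ of the faces $F_i\cap F_j$ that serve as facets of $F_i$. Hence every face of a simple polytope is simple, applied here to the facet $F_i$. Next, I would unwind definitions: the facets of $F_i$ are precisely the nonempty intersections $F_i\cap F_j$ of codimension one in $F_i$, and in a simple polytope these occur exactly for those $j$ with $\{i,j\}\in K_P$. This sets up a bijection between vertices of $\mathrm{lk}_{K_P}(\{i\})$ and facets of $F_i$.

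To extend the bijection to higher simplices, I would invoke the identity
\[
F_i\cap F_{j_1}\cap\cdots\cap F_{j_k} \ =\ (F_i\cap F_{j_1})\cap\cdots\cap(F_i\cap F_{j_k}),
\]
so that $\{i,j_1,\dots,j_k\}\in K_P$ (i.e.\ the left side is a nonempty face of $P$) if and only if the corresponding intersection of facets of $F_i$ on the right is nonempty, which by definition is the same as $\{j_1,\dots,j_k\}\in K_{F_i}$. This is exactly the statement that $\{j_1,\dots,j_k\}\in \mathrm{lk}_{K_P}(\{i\})$, yielding the desired isomorphism of simplicial complexes.

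The argument is essentially bookkeeping, so there is no serious obstacle; the only point that requires a moment of care is justifying that each $F_i$ is itself a simple polytope, which is the standard ``faces of simple polytopes are simple'' observation used above. An equivalent, more classical route would be to realize $K$ as $\partial Q$ for the simplicial polytope $Q=P^\ast$ and identify $\mathrm{lk}_K(v)$ with the boundary of the vertex figure $Q/v$ (a simplicial polytope obtained by slicing $Q$ with a hyperplane separating $v$ from the other vertices), but the formulation via $K_{F_i}$ fits more naturally into the combinatorics used in the rest of the paper.
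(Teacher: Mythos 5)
Your argument is the same as the paper's: both identify $\mathrm{lk}_{K_P}(\{i\})$ with $K_{F_i}$ by matching facets of $F_i$ with the facets $F_j$ of $P$ meeting $F_i$, invoke the fact that a facet of a simple polytope is again a simple polytope, and use the intersection identity to match simplices. The proposal is correct and follows essentially the same route, with slightly more explicit attention to why $F_i$ is simple.
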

\begin{proof}
Assume $K=\partial P^*$ where $P$ is a simple polytope with the facet set $\mathcal{F}(P)=\{F_1,F_2,\dots F_m\}$. We note that $K$ may be regarded as the simplicial complex with $\mathcal{F}(P)$ as its vertex set such that
each simple $\sigma$ of $K$ is a subset $\{F_{i_1}, ..., F_{i_r}\}$ with $F_{i_1}\cap\cdots\cap F_{i_r}\not=\varnothing$ of $\mathcal{F}(P)$.
With this understood, now given a vertex  $v$ in $K$, then there is a facet $F$ of $P$ such that $v=\{F\}$. Thus
\begin{eqnarray*}
 \begin{split}
\textrm{Link}_K(v)&=\big\{\tau=\{F_{i_1}, ..., F_{i_r}\}\in K~| v\cup\tau\in K, v\cap\tau=\varnothing\big\}\\
&=\{\{F_{i_1},\ldots,F_{i_r}\}\in K~|~F\cap F_{i_1}\cap\cdots\cap F_{i_r}\neq\varnothing\}.\\
 \end{split}
\end{eqnarray*}
Clearly, $\textrm{Link}_K(v)$
 is combinatorially equivalent to the simplicial complex
$K'=\{\{F\cap F_{i_1},\ldots,F\cap F_{i_r}\}~|~(F\cap F_{i_1})\cap\cdots\cap (F\cap F_{i_r})\neq\varnothing,~F\cap F_{i_r}\in\mathcal{F}(F)\}$ with $\mathcal{F}(F)$ as vertex set, where $\mathcal{F}(F)$ denotes the set of all facets of $F$.
Since $P$ is simple, $F$ is also simple, so $K'$ is the boundary complex $\partial F^*$ of the dual of $F$. Therefore, $\textrm{Link}_K(v)$ is a polytopal sphere.
\end{proof}

\begin{lemma}\label{h}
Let $K'$ be a simplical complex
and $K$ be the suspension of $K'$ such that $K=K'*S^0$, where $S^0$ is the $0$-dimensional sphere. Then $K$ is a polytopal sphere if and only if $K'$ is a polytopal sphere.
\end{lemma}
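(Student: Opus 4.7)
The plan is to prove the equivalence by appealing directly to Lemma~\ref{formula} for one direction and Lemma~\ref{N} for the other, with the key observation being that the interval $I=[0,1]$ is a simple $1$-polytope whose associated polytopal sphere $K_I$ is precisely $S^0$.

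For the ``if'' direction, I would start by writing $K'=\partial Q^*$ for some simple polytope $Q$, and noting that the interval $I$ is itself a simple $1$-polytope with $K_I=\partial I^*=S^0$. Then by Lemma~\ref{formula},
\[
K_{Q\times I}=K_Q\ast K_I=K'\ast S^0=K,
\]
which exhibits $K$ as the polytopal sphere of the simple polytope $Q\times I$. Hence $K$ is a polytopal sphere.

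For the ``only if'' direction, I would write $S^0=\{v_1,v_2\}$ and recover $K'$ as the link of $v_1$ in $K$. The point is that every simplex of $K=K'\ast S^0$ has the form $a\cup b$ with $a\in K'$ and $b\in\{\varnothing,\{v_1\},\{v_2\}\}$ (the pair $\{v_1,v_2\}$ is not a simplex of $S^0$), so a direct computation from the definition of link gives
\[
\textrm{Link}_K(v_1)=\{\tau\in K\mid v_1\cup\tau\in K,\ v_1\notin\tau\}=K'.
\]
Since $K$ is assumed to be a polytopal sphere, Lemma~\ref{N} tells us that $\textrm{Link}_K(v_1)$ is a polytopal sphere, hence so is $K'$.

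Neither direction presents a genuine obstacle; the only step that requires care is the verification of the identity $\textrm{Link}_K(v_1)=K'$ for a join with $S^0$, which is a routine unwinding of the definitions of join and link but must be done carefully to ensure that no simplex of the form $a\cup\{v_1,v_2\}$ gets mistakenly included. With that in hand the proof is essentially immediate from the two previously established lemmas.
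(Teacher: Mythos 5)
Your proposal is correct and follows essentially the same route as the paper: the forward direction via $\textrm{Link}_K(v_1)=K'$ and Lemma~\ref{N}, and the reverse direction via Lemma~\ref{formula} applied to $Q\times I$ with $K_I=\partial I^*=S^0$. No substantive differences.
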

\begin{proof}
We note that $S^0$ is the polytopal sphere of the interval $I$ as a 1-dimensional  polytope. So $S^0=\partial I^*$. Let $v_1$ and $v_2$ be two vertices of $I$. Then $S^0$ is the 0-dimensional  simplicial complex on vertex set $\{v_1, v_2\}$. Furthermore, $K=K'*S^0=\big\{\tau, \tau\cup\{v_1\}, \tau\cup\{v_2\}~|~\tau\in K'\big\}$, i.e., the suspension of $K'$.
\vskip .1cm

If $K$ is a polytopal sphere,  obviously $\textrm{Link}_K(v_1)=\textrm{Link}_K(v_2)=K'$. By Lemma \ref{N}, $K'$ is a polytopal sphere. Conversely, if $K'$ is a polytopal sphere, we may assume that $K'=\partial P'^*$ for some simple polytope $P'$. By Lemma~\ref{formula}, we have $K=K'*S^0=(\partial P'^*)*(\partial I^*)=\partial(P'\times I)^*.$
So $K$ is exactly the polytopal sphere of $P'\times I$.
\end{proof}

\begin{lemma}\label{s}
Let $K, K_1$ and $K_2$ be simplicial complexes such that $K=K_1*K_2$. If $K$ is a polytopal sphere, then both $K_1$ and $K_2$ are  polytopal spheres, too.
\end{lemma}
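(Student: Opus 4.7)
My plan is to identify both $K_1$ and $K_2$ with links of suitable simplices inside $K$, and then to invoke a natural extension of Lemma \ref{N} from vertices to arbitrary simplices. The two key ingredients are the join--link identity
$$\textrm{Link}_{K_1*K_2}(\sigma)=K_1*\textrm{Link}_{K_2}(\sigma)\ \text{ for }\sigma\in K_2,$$
which follows directly from the definition of the join together with the disjointness of the vertex sets of $K_1$ and $K_2$, and an iterated form of Lemma \ref{N}.

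First I would pick a maximal simplex $\tau_2$ of $K_2$, which exists since $K_2$ is nonempty and finite. By maximality, $\textrm{Link}_{K_2}(\tau_2)=\{\varnothing\}$, which is the identity element for the join. Applying the join--link identity therefore gives
$$\textrm{Link}_K(\tau_2)=K_1*\{\varnothing\}=K_1.$$
So it suffices to show that the link of $\tau_2$ inside the polytopal sphere $K$ is itself a polytopal sphere.

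The main step is then to prove, by induction on $|\sigma|$, that if $K$ is a polytopal sphere and $\sigma\in K$ is any nonempty simplex, then $\textrm{Link}_K(\sigma)$ is a polytopal sphere. The case $|\sigma|=1$ is exactly Lemma \ref{N}. For the inductive step I would pick $v\in\sigma$ and use the straightforward identity
$$\textrm{Link}_K(\sigma)=\textrm{Link}_{\textrm{Link}_K(v)}(\sigma\setminus\{v\}).$$
By Lemma \ref{N} the complex $\textrm{Link}_K(v)$ is polytopal, and the inductive hypothesis applied to the simplex $\sigma\setminus\{v\}$, which has one fewer vertex and still lies in $\textrm{Link}_K(v)$, yields the conclusion.

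Assembling these pieces shows that $K_1=\textrm{Link}_K(\tau_2)$ is polytopal, and the symmetric argument applied to a maximal simplex of $K_1$ gives that $K_2$ is polytopal as well. I expect the only real obstacle to be the bookkeeping around degenerate conventions, in particular treating $\{\varnothing\}$ as the identity for the join and verifying carefully that the link of a maximal simplex is $\{\varnothing\}$ rather than genuinely empty; beyond this, no new combinatorial or geometric input past Lemma \ref{N} seems to be required.
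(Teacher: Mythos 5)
Your proof is correct, and it takes a genuinely different route from the paper's. The paper argues by induction on $\dim K$: it uses the suspension trick of Lemma~\ref{h} to write $K*S^0=K_1*K_2*S^0$, takes the link of a single vertex $v\in K_1$ in that suspension to drop the dimension of the first factor, applies Lemma~\ref{N} and Lemma~\ref{h} to keep everything polytopal, and then invokes the induction hypothesis on $\bigl(\textrm{Link}_{K_1}(v)\bigr)*K_2$. You instead identify $K_1$ outright as $\textrm{Link}_K(\tau_2)$ for a maximal simplex $\tau_2\in K_2$ (via the join--link identity and $\textrm{Link}_{K_2}(\tau_2)=\{\varnothing\}$), and then prove the strengthened form of Lemma~\ref{N} --- that links of \emph{arbitrary} nonempty simplices in a polytopal sphere are polytopal --- by induction on $|\sigma|$ using $\textrm{Link}_K(\sigma)=\textrm{Link}_{\textrm{Link}_K(v)}(\sigma\setminus\{v\})$. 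Your route dispenses entirely with the suspension device and the induction on $\dim K$, at the modest cost of the (routine) identity $\textrm{Link}_{K_1*K_2}(\sigma)=K_1*\textrm{Link}_{K_2}(\sigma)$; the key move of seeing one join factor as the link of a maximal face of the other is arguably the more transparent of the two arguments. Both proofs implicitly assume $K_1$ and $K_2$ each contain a vertex (the paper picks $v\in K_1$, you pick a nonempty $\tau_2\in K_2$); this is harmless since otherwise the join degenerates, and the cone-like structure that would result is excluded by the hypothesis that $K$ is a sphere. You flag the $\{\varnothing\}$ bookkeeping correctly.
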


\begin{proof}
We will perform an induction on the dimension of $K$. We note that $\dim K=\dim K_1+\dim K_2+1\geq 1$.

When $\dim K=1$, since $K$ is a polytopal sphere, it must be a circle, so it would be a boundary complex of the dual of a  polygon. It is not hard to check that $K_1=K_2=S^0$. So the theorem follows.

Now assume inductively that   the theorem holds if $\dim K \leq n$.
When $\dim K=n+1$, by Lemma \ref{h} we know that $K*S^0=K_1*K_2*S^0$ is a polytopal sphere too.
Take a vertex $v$ of $K_1$, $v$ is also a vertex of $K$.   Consider the link of $v$ in $K*S^0$, we have
$$\textrm{Link}_{K*S^0}(v)=\textrm{Link}_{K_1*K_2*S^0}(v)=\big(\textrm{Link}_{K_1}(v)\big)*K_2*S^0.$$
Using Lemmas \ref{N} and \ref{h}, since $K*S^0$ is polytopal sphere, both $\textrm{Link}_{K*S^0}(v)$
and $\big(\textrm{Link}_{K_1}(v)\big)*K_2$ are so.
Since $\dim\big(\textrm{Link}_{K_1}(v)\big)*K_2=\dim K_1*K_2-1=n$,
by induction hypothesis, $K_2$ is a polytopal sphere. In a similar way as above,
 choose a vertex of $K_2$, we can prove that $K_1$ is also a polytopal sphere.
\end{proof}

With  Proposition~\ref{s-poly} and Lemma~\ref{s} together, we have

\begin{theorem}\label{main1}
Let $P$ be an $n$-dimensional simple polytope with $m$ facets. Then $P$ is indecomposable (i.e., $P$ is not a product of polytopes) if and only if the polynomial $f_{K_P}=\sigma_n^{K_P}(x_1, ..., x_m)$ is indecomposable in $R[x_1, ..., x_m]$.
\end{theorem}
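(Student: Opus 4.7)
My plan is to deduce the theorem by combining Proposition~\ref{s-poly} (join decompositions of simplicial complexes correspond to product decompositions of their polynomials), Lemma~\ref{formula} (products of simple polytopes correspond to joins of their polytopal spheres), and Lemma~\ref{s} (join factors of polytopal spheres are themselves polytopal spheres). The theorem is essentially a bridge between these three statements once one observes that $f_{K_P}=\sigma_n^{K_P}(x_1,\ldots,x_m)$.

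For the forward (contrapositive) direction, I would assume $P$ is decomposable, write $P=P_1\times P_2$ with $\dim P_i\geq 1$, and then apply Lemma~\ref{formula} to get $K_P=K_{P_1}\ast K_{P_2}$. Since these two factors share no vertices and are each nonempty simplicial complexes on disjoint subsets of $[m]$, Proposition~\ref{s-poly} gives $f_{K_P}=f_{K_{P_1}}\cdot f_{K_{P_2}}$, a genuine nontrivial factorization in $R[x_1,\ldots,x_m]$.

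For the reverse direction, I would suppose $f_{K_P}$ factors as $f_{K_P}=g\cdot h$ with $g,h$ nonconstant in $R[x_1,\ldots,x_m]$. Since $f_{K_P}$ is square-free and nice (being the polynomial of the polytopal sphere $K_P$, whose monomials are the maximal simplices and hence incomparable), both $g$ and $h$ must be square-free nice polynomials in disjoint variable sets, so Proposition~\ref{s-poly} yields a join decomposition $K_P=K_g\ast K_h$ into two nonempty simplicial complexes. Applying Lemma~\ref{s} gives that each of $K_g$ and $K_h$ is itself a polytopal sphere, say $K_g=K_{P_1}$ and $K_h=K_{P_2}$ for simple polytopes $P_1,P_2$. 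Then Lemma~\ref{formula} rewrites $K_P=K_{P_1}\ast K_{P_2}=K_{P_1\times P_2}$, and since a simple polytope is determined up to combinatorial equivalence by its polytopal sphere (via dualising the simplicial polytope), this forces $P\cong P_1\times P_2$, contradicting indecomposability.

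The only subtle point, and the main thing to check carefully, is that the algebraic factorisation in $R[x_1,\ldots,x_m]$ really produces factors supported on disjoint variables; without this one cannot invoke Proposition~\ref{s-poly} directly. This follows because $f_{K_P}$ is square-free and every monomial has the same degree $n$, so any factorisation $g\cdot h$ has $g$ and $h$ both square-free and homogeneous, and a shared variable would force a square term in the product. Once this observation is in place the rest of the argument is a direct chain of references to the lemmas already established.
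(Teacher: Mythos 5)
Your proposal is correct and follows essentially the same route as the paper: the paper derives Theorem~\ref{main1} directly as ``With Proposition~\ref{s-poly} and Lemma~\ref{s} together,'' which is exactly your chain Proposition~\ref{s-poly}~$\leftrightarrow$~Lemma~\ref{formula}~$\leftrightarrow$~Lemma~\ref{s}, with Lemma~\ref{formula} implicitly used to pass between polytope products and joins. You merely spell out the disjoint-variables observation (which the paper buries inside its proof of Proposition~\ref{s-poly}) and the standard fact that a simple polytope is determined combinatorially by $K_P$, both of which are harmless and correctly stated.
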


Associated with the equivariant characteristic classes of the moment-angle $\mathcal{Z}_P^{\Bbb F}$ over $P$, we have

\begin{corollary}
An $n$-dimensional simple polytope $P$ with $m$ facets is indecomposable  if and only if the $n$-th equivariant Stiefel--Whitney class
$w_n^{\mathbb{T}^m_{\Bbb R}}(\mathcal{T}\mathcal{Z}_P^{\Bbb R})$ or Chern class  $c_n^{\mathbb{T}^m_{\Bbb C}}(\mathcal{T}\mathcal{Z}_P^{\Bbb C})$ is  indecomposable in
$H^*(B\mathbb{T}_{\Bbb F}^m; R_{\Bbb F})=R_{\Bbb F}[x_1, ..., x_m]$.
\end{corollary}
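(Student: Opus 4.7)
The plan is to deduce this corollary directly from Theorem~\ref{main1} by applying it with coefficient ring $R = R_{\Bbb F}$ (that is, $\Z$ when ${\Bbb F}=\C$ and $\Z_2$ when ${\Bbb F}=\R$), and then translating the polynomial statement into the topological language of equivariant characteristic classes. The whole content of the corollary is an identification, so no new combinatorial input is needed.

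First, I would recall the identifications already set up in Section~2. From the computation of the total equivariant Stiefel--Whitney class and the total equivariant Chern class of $\mathcal{T}\mathcal{Z}_P^{\Bbb F}$, we have
\[
w_n^{\mathbb{T}^m_{\Bbb R}}(\mathcal{T}\mathcal{Z}_P^{\Bbb R}) = \sigma_n^{K_P}(x_1,\ldots,x_m) \in R_{\Bbb R}(K_P),\qquad c_n^{\mathbb{T}^m_{\Bbb C}}(\mathcal{T}\mathcal{Z}_P^{\Bbb C}) = \sigma_n^{K_P}(x_1,\ldots,x_m) \in R_{\Bbb C}(K_P).
\]
Next, via the module embedding $e\colon R_{\Bbb F}(K_P) \hookrightarrow R_{\Bbb F}[x_1,\ldots,x_m]$ which sends a class to its prime representative, each $\sigma_n^{K_P}(x_1,\ldots,x_m)$ is identified with a square-free homogeneous polynomial of degree $n$ in $R_{\Bbb F}[x_1,\ldots,x_m]$. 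Since the maximal simplices of $K_P$ are exactly the $(n-1)$-simplices (corresponding to the vertices of $P$), this prime representative coincides with $f_{K_P}$ as defined in Section~3.2. Finally, $H^*(B\mathbb{T}^m_{\Bbb F};R_{\Bbb F}) = R_{\Bbb F}[x_1,\ldots,x_m]$, and the pullback along the Borel fibration $\pi\colon E\mathbb{T}_{\Bbb F}^m\times_{\mathbb{T}_{\Bbb F}^m}\mathcal{Z}_P^{\Bbb F} \longrightarrow B\mathbb{T}_{\Bbb F}^m$ of the $n$-th universal characteristic class is exactly $f_{K_P}$ after this identification.

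With these translations in hand, the statement becomes: $P$ is indecomposable if and only if $f_{K_P}$ is indecomposable in $R_{\Bbb F}[x_1,\ldots,x_m]$. This is precisely Theorem~\ref{main1} applied to $R=R_{\Bbb F}$, which finishes the proof.

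There is essentially no obstacle. The only point requiring a moment's care is verifying that the prime representative of $\sigma_n^{K_P}(x_1,\ldots,x_m)$ really equals $f_{K_P}$; but this is immediate because no monomial $x_{j_1}\cdots x_{j_n}$ corresponding to a maximal simplex lies in $\mathcal{I}_{K_P}$, so none of these terms is dropped under the projection $p\colon R_{\Bbb F}[x_1,\ldots,x_m]\to R_{\Bbb F}(K_P)$, while all monomials of $\sigma_n(x_1,\ldots,x_m)$ indexed by non-faces of $K_P$ lie in $\mathcal{I}_{K_P}$ and hence are exactly what is removed in passing to the prime representative.
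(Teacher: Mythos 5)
Your proposal is correct and takes exactly the route the paper intends: the corollary is stated without proof immediately after Theorem~\ref{main1}, being nothing more than a translation of that theorem through the identifications set up in Section~2 (the total equivariant characteristic class computation, the module embedding $e\colon R_{\Bbb F}(K_P)\hookrightarrow R_{\Bbb F}[x_1,\ldots,x_m]$, and the observation that the prime representative of $\sigma_n^{K_P}$ is $f_{K_P}$ because $K_P$ is pure of dimension $n-1$). Your write-up simply makes these bookkeeping steps explicit; there is no gap and no divergence from the paper's argument.
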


\begin{example}
 $P$ is a prism which is the product $\Delta^1\times \Delta^2$ of two simplices and has 5 facets, shown as follows.
\begin{center}
		\begin{tikzpicture}
		\draw[thick,-] (-1,0) -- (2,0);
		\draw[thick,-] (-1,0) -- (.25,-1);
		\draw[thick,-] (2,0) -- (.25,-1);
		\draw[dotted,-] (-1,-3) -- (2,-3);
		\draw[thick,-] (-1,-3) -- (.25,-4);
		\draw[thick,-] (2,-3) -- (.25,-4);
		\draw[thick,-] (-1,0) -- (-1,-3);
		\draw[thick,-] (2,0) -- (2,-3);
		\draw[thick,-] (.25,-1) -- (.25,-4);

		\draw (.3,-.5) node[] {\small{$F_1$}};
		
		\draw (-.5,-2) node[] {\small{$F_3$}};
		
		\draw (1.5,-2) node[] {\small{$F_4$}};
		
		\draw[dotted,-] (0.5,-3.5) -- (0.75,-3.75);
		\draw[thick,->] (0.75,-3.75) -- (1.25,-4.25);
		\draw (1.25,-4.25) node[right] {\small{$F_5$}};
		\draw (2.6,-.6) node[right] {\small{$F_2$}};
		\draw[dotted,-] (1.25,-1.5) -- (2,-1);
		\draw[thick,->] (2,-1) -- (2.6,-.6);
		\end{tikzpicture}
	\end{center}
 Then we see that
\begin{eqnarray*}
 \sigma_3^{K_P}(x_1, ..., x_5)
&=&x_1x_2x_3+x_1x_2x_4+x_1x_3x_4+x_2x_3x_5+x_2x_4x_5+x_3x_4x_5\\
&=&(x_1+x_5)(x_2x_3+x_2x_4+x_3x_4)\\
\end{eqnarray*}
is decomposable in $R_{\Bbb F}[x_1, ..., x_5]$. However, if we cut out a vertex of $P$, then the resulting polytope
\begin{center}
       \begin{tikzpicture}

       %

       \draw[thick,-] (-1,0) -- (2,0);
       \draw[thick,-] (-1,0) -- (-0.25,-.6);
       \draw[thick,-] (2,0) -- (.95,-.6);
       \draw[thick,-] (-0.25,-.6) -- (.95,-.6);
       \draw[dotted,-] (-1,-3) -- (2,-3);
       \draw[thick,-] (-1,-3) -- (.25,-4);
       \draw[thick,-] (2,-3) -- (.25,-4);
       \draw[thick,-] (-1,0) -- (-1,-3);
       \draw[thick,-] (2,0) -- (2,-3);
       \draw[thick,-] (.25,-1.75) -- (.25,-4);
       \draw[thick,-] (-0.25,-.6) -- (.25,-1.75);
       \draw[thick,-] (.95,-.6) -- (.25,-1.75);

       \draw (.3,-.3) node[] {\small{$F_1$}};

       \draw (-.5,-2) node[] {\small{$F_3$}};

       \draw (1.5,-2) node[] {\small{$F_4$}};

       \draw[dotted,-] (0.5,-3.5) -- (0.75,-3.75);
       \draw[thick,->] (0.75,-3.75) -- (1.25,-4.25);
       \draw (1.25,-4.25) node[right] {\small{$F_5$}};
       \draw (2.6,-.6) node[right] {\small{$F_2$}};
       \draw[dotted,-] (1.25,-1.5) -- (2,-1);
       \draw[thick,->] (2,-1) -- (2.6,-.6);

       \draw (0.25,-1) node[] {\small{$F_6$}};

       \end{tikzpicture}
   \end{center}
is not a product, and  the corresponding $\sigma_3^{K_P}(x_1, ..., x_5, x_6)=x_1x_2x_3+x_1x_2x_4+x_1x_3x_6+x_1x_4x_6+x_3x_4x_6+x_2x_3x_5+x_2x_4x_5+x_3x_4x_5$
is indecomposable in $R_{\Bbb F}[x_1, ..., x_6]$.
\end{example}

\begin{remark}
We note that the decomposability of $\sigma_n^{K_P}(x_1, ..., x_m)$ in $R[x_1, ..., x_m]$ is different from that of $\sigma_n^{K_P}(x_1, ..., x_m)$ in $R(K_P)$.
In Subsection~\ref{exam} we shall give an example to show that the decomposability of $\sigma_n^{K_P}(x_1, ..., x_m)$ in $R(K_P)$
cannot be used to detect that of $P$.
\end{remark}

\section{An algebraic criterion for $P$ to be $n$-colorable}

\subsection{$n$-colorable polytopes}
Let $P$ be a simple polytope of dimension $n$ with $m$ facets. We say that $P$ is {\em $n$-colorable} if there is a coloring map $c: \mathcal{F}(P)\longrightarrow [n]=\{1, ..., n\}$ such that $c(F_i)\not=c(F_j)$ whenever $F_i\cap F_j\not=\varnothing$, where $\mathcal{F}(P)$ is the set of all facets of $P$. An equivalence definition for $P$ to be $n$-colorable is also given by saying that there is a non-degenerate simplicial map
$K_P\longrightarrow 2^{[n]}$, where $2^{[n]}$ is the complex determined by  all faces of the simplex $\Delta^{n-1}$  of dimension $n-1$.

\vskip .1cm

All $n$-colorable polytopes can produce an important class of quasitoric manifolds and small covers, which were studied and named as {\em pullbacks from the linear model} in \cite{dj}.
In terms of characteristic functions, it is easy to see that $P$ is $n$-colorable if and only if there is a characteristic function
$\lambda: \mathcal{F}(P)\longrightarrow R_{\Bbb F}^n$ such that the image $\im \lambda=\{e_1, ..., e_n\}$, where $\{e_1, ..., e_n\}$ is a basis of $R_{\Bbb F}^n$.

\vskip .1cm

In terms of combinatorics, Joswig gave the following criterion in~\cite{j}.

\begin{theorem}[\cite{j}]
An $n$-dimensional
simple polytope $P$ is $n$-colorable if and only if every 2-face
has an even number of edges.
\end{theorem}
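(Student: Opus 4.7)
The plan is to handle both implications separately: the forward direction is a direct counting argument, while the reverse requires constructing the coloring from an equivalence relation on edges whose consistency depends crucially on the even 2-face hypothesis.

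For the forward implication, I would assume $c\colon \mathcal{F}(P)\to [n]$ is an $n$-coloring and fix an arbitrary 2-face $F$. Since $P$ is simple and $F$ has codimension $2$, exactly $n-2$ facets $A_1,\dots,A_{n-2}$ contain $F$; at any vertex of $F$ these carry $n-2$ pairwise distinct colors, leaving two residual colors $\{a,b\}$. Listing the edges of $F$ cyclically as $e_1,\dots,e_r$ with $e_i=F\cap B_i$, consecutive edges meet at the vertex $v_i=A_1\cap\cdots\cap A_{n-2}\cap B_i\cap B_{i+1}$, and the $n$-coloring property at $v_i$ forces $\{c(B_i),c(B_{i+1})\}=\{a,b\}$. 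Thus the cyclic sequence $c(B_1),\dots,c(B_r)$ strictly alternates between $a$ and $b$, and the cycle closes up only when $r$ is even.

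For the reverse direction, assume every 2-face has even length. I would introduce an equivalence relation $\sim$ on the edge set $E(P)$ generated by declaring $e\sim e'$ whenever $e$ and $e'$ are \emph{second-neighbors} in some 2-face $F$, meaning they both lie in $F$ and have cyclic distance exactly $2$ in the edge-cycle of $F$. Because every 2-face has even size $2k$, the restriction of $\sim$ to a single 2-face has exactly two classes of size $k$, and second-neighbors never share a vertex. For each facet $F$ and vertex $v\in F$, let $e^F_v$ denote the unique edge through $v$ not contained in $F$, and propose the coloring $c(F):=[e^F_v]_\sim$. Well-definedness reduces, by induction along a path in the 1-skeleton of $F$, to the case of $v,v'$ adjacent via an edge $e=F\cap G_1\cap\cdots\cap G_{n-2}$ of $F$; then $F':=G_1\cap\cdots\cap G_{n-2}$ is itself a 2-face, and a direct inspection identifies the two edges of $F'$ through $v$ as $\{e,e^F_v\}$ and through $v'$ as $\{e,e^F_{v'}\}$, so $e^F_v$ and $e^F_{v'}$ are second-neighbors in $F'$, giving $e^F_v\sim e^F_{v'}$.

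The main obstacle will be the remaining step, \emph{vertex-disjointness}: at every vertex $w$ of $P$, the $n$ edges through $w$ must lie in $n$ distinct $\sim$-classes, so that the $n$ facets through $w$ receive $n$ distinct colors under $c$. A projectivity argument on the dual graph of $K_P$ handles this: a closed walk based at a maximal simplex of $K_P$ produces a permutation (``projectivity'') of that simplex's vertex set, and the projectivity group at a base point is generated by the small loops that walk once around the boundary of a 2-face of $P$. A direct check shows that the projectivity around a 2-face with $r$ edges is a transposition if $r$ is odd and the identity if $r$ is even, so under the even-2-face hypothesis every generator is trivial, and no non-trivial $\sim$-chain can connect two different edges through a common vertex. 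A short propagation argument (a single edge of $P$ identifies the class-structures at its two endpoints via a bijection) then shows $c$ takes values in exactly $n$ classes, yielding the desired $n$-coloring.
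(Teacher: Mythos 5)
The paper does not prove this theorem at all: it is quoted as a black box from Joswig (reference \cite{j}), so there is nothing in the paper to compare against line by line. Judged on its own, your proof is essentially a reconstruction of Joswig's projectivity argument with an extra intermediate layer. The forward direction (alternation of the two residual colors around a 2-face forces even length) is correct and standard. In the reverse direction you set up a second-neighbor equivalence $\sim$ on edges, define the color of a facet $F$ to be the class of the unique edge at a chosen vertex $v\in F$ missing $F$, prove well-definedness by walking in the $1$-skeleton of $F$, and then reduce the remaining point (distinct edges at a common vertex lie in distinct classes) to the triviality of the group of projectivities of $K_P$ --- but that last step is precisely Joswig's theorem (projectivities generated by codimension-2 loops, a loop around a 2-face of length $r$ acts by the identity iff $r$ is even), so in the end you are relying on exactly the same machinery, just accessed indirectly. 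The assertion ``no non-trivial $\sim$-chain can connect two different edges through a common vertex'' is the real crux and you leave it as a gesture; it is in fact true, but seeing it requires unwinding a $\sim$-chain $e_0\sim\cdots\sim e_k$ into a walk $w\to a_1\to b_1\to\cdots\to b_k\to w$ in the dual graph and checking that the ``missing facet'' datum carried by each $e_i$ transports exactly as the projectivity along that walk, so that $e_0\not=e_k$ forces a nontrivial projectivity at $\sigma_w$. That translation deserves a paragraph rather than a clause. If you are going to invoke Joswig's projectivity theorem anyway, it is shorter and cleaner to dispense with $\sim$ entirely: fix an arbitrary bijection from the facets through a base vertex to $[n]$ and transport it along the dual graph; well-definedness is exactly triviality of the projectivity group, which under the even-2-face hypothesis is immediate from the generator computation you already cite. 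The edge-equivalence construction does not buy you anything Joswig's argument doesn't already give, and it makes the crucial vertex-disjointness step harder to see.
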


More recently, a criterion in terms of self-dual binary codes of $P$ has been given in ~\cite{cly}.

\subsection{An algebraic criterion for $P$ to be $n$-colorable} Let $P$ be a simple polytope of dimension $n$ with $m$ facets.

\begin{theorem}\footnote[8]{We will see from the work of Notbohm~\cite{n1, n2} stated in Subsection~\ref{Notbohm} that Theorem~\ref{n-co} and Theorem~\ref{l-co} below give a weaker criterion for
 $P$ to be $\ell$-colorable than Notbohm's one, where $\ell\geq n$.}
\label{n-co}
The
simple polytope $P$ is $n$-colorable
if and only if $\sigma_n^{K_P}(x_1, ..., x_m)$ can be decomposed as   $\lambda_1\cdots\lambda_n$ of factors of degree $\dim {\Bbb F}$ in $R_{\Bbb F}(K_P)=R_{\Bbb F}[x_1, ..., x_m]/\mathcal{I}_{K_P}$.
\end{theorem}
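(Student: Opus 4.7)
The plan is to prove the equivalence by constructing a concrete dictionary between $n$-colorings and such factorizations, using only the vertex-facet incidence encoded in $K_P$.

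For the forward direction, given an $n$-coloring $c : \mathcal{F}(P) \to [n]$, I would set
$$\lambda_j = \sum_{c(F_i) = j} x_i, \qquad j = 1, \ldots, n,$$
and expand $\lambda_1 \cdots \lambda_n$ in $R_{\mathbb{F}}(K_P)$. Each expansion term is indexed by a function $f : [n] \to [m]$ with $c(F_{f(j)}) = j$, producing the squarefree monomial $x_{f(1)} \cdots x_{f(n)}$ (distinct colors force distinct facets). Such a monomial survives modulo $\mathcal{I}_{K_P}$ exactly when $\{f(1), \ldots, f(n)\}$ is a face of $K_P$, i.e.\ when the $n$ facets $F_{f(1)}, \ldots, F_{f(n)}$ meet at a (necessarily unique) vertex $v$ of $P$; conversely, each vertex $v$ of $P$ yields exactly one such $f$, with coefficient $1$. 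Hence $\lambda_1 \cdots \lambda_n = \sum_v m_v = \sigma_n^{K_P}(x_1, \ldots, x_m)$, where $m_v$ denotes the monomial attached to the vertex $v$.

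For the converse, assume $\sigma_n^{K_P}(x_1, \ldots, x_m) = \lambda_1 \cdots \lambda_n$ in $R_{\mathbb{F}}(K_P)$ with $\lambda_j = \sum_{i=1}^m a_{ji} x_i$, $a_{ji} \in R_{\mathbb{F}}$. The crux is that for each vertex $v = F_{i_1} \cap \cdots \cap F_{i_n}$ there is a ring homomorphism
$$\pi_v : R_{\mathbb{F}}(K_P) \longrightarrow R_{\mathbb{F}}[y_1, \ldots, y_n], \qquad x_{i_k} \mapsto y_k, \qquad x_j \mapsto 0 \text{ for } j \notin \{i_1, \ldots, i_n\}.$$
This is well defined because every minimal non-face of $K_P$ must contain some index outside the maximal face $\{i_1, \ldots, i_n\}$, so every generator of $\mathcal{I}_{K_P}$ is killed. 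Applying $\pi_v$, the only vertex monomial surviving on the left is $m_v$, yielding the polynomial identity
$$y_1 y_2 \cdots y_n = \prod_{j=1}^n \Bigl(\sum_{k=1}^n a_{j, i_k}\, y_k\Bigr) \qquad \text{in } R_{\mathbb{F}}[y_1, \ldots, y_n].$$
By unique factorization in $R_{\mathbb{F}}[y_1, \ldots, y_n]$ (with units $\pm 1$ when $R_{\mathbb{F}} = \mathbb{Z}$ and $1$ when $R_{\mathbb{F}} = \mathbb{Z}_2$), each linear factor must be of the form $\pm y_{\sigma_v(j)}$ for some permutation $\sigma_v \in S_n$. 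In particular, for every facet $F_{i_k}$ meeting at $v$, the column of $(a_{ji})$ indexed by $i_k$ has a unique nonzero entry, equal to $\pm 1$. Since every facet of $P$ belongs to at least one vertex and this sparsity conclusion is a statement about the column alone, each column of $(a_{ji})$ has a unique nonzero row, and I would define $c(F_i)$ to be that row. The permutation structure at each vertex $v$ forces $c(F_{i_1}), \ldots, c(F_{i_n})$ to be pairwise distinct, and because $P$ is simple any two intersecting facets share at least one vertex, so $c$ is a proper $n$-coloring.

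The main technical point is the converse, and specifically the passage from the $R_{\mathbb{F}}(K_P)$-identity to a standalone polynomial identity via $\pi_v$: one must use maximality of the vertex face in two ways, first to check that $\pi_v$ descends through $\mathcal{I}_{K_P}$, and second to ensure that only the monomial $m_v$ survives on the left after applying $\pi_v$. Once these are verified, unique factorization in a polynomial ring does the rest, and the globally defined coloring drops out of the observation that the column-sparsity of the matrix $(a_{ji})$ is intrinsic and does not depend on the choice of vertex used to detect it.
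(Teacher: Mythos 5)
Your forward direction is identical to the paper's: color classes give the factors $\lambda_j$ as sums of the $x_i$ in each class, and expanding the product matches the monomials of $\sigma_n^{K_P}$ vertex by vertex. Your converse, however, is a genuinely different and tighter argument than the one in the paper. The paper proceeds by asserting three combinatorial properties of a putative factorization $\sigma_n^{K_P}=\lambda_1\cdots\lambda_n$ directly from square-freeness and the fact that every monomial has coefficient $1$ --- namely that distinct $\lambda_i,\lambda_j$ share no variables, that any two variables inside one $\lambda_i$ multiply to zero in $R_{\Bbb F}(K_P)$, and that all coefficients in each $\lambda_i$ are $1$ (the last ``easily proved by induction on $n$'') --- and then reads off the color classes from the supports of the $\lambda_i$. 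These properties are reasonable but not really justified in the paper, and over $R_{\Bbb F}=\Z$ the coefficient claim requires normalizing away signs. What you do instead is localize at each vertex $v$ through the specialization homomorphism $\pi_v:R_{\Bbb F}(K_P)\to R_{\Bbb F}[y_1,\dots,y_n]$, which is well defined precisely because $\{i_1,\dots,i_n\}$ is a maximal face of $K_P$, and observe that only $m_v$ survives on the left; unique factorization in a polynomial ring over $\Z$ or $\Z_2$ then forces each $\pi_v(\lambda_j)$ to be a unit times a single coordinate $y_{\sigma_v(j)}$. Gluing these local constraints across all vertices of $P$ gives you column-sparsity of the coefficient matrix $(a_{ji})$ globally, hence a well-defined coloring, and the permutation $\sigma_v$ at each vertex gives properness. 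This is cleaner and more rigorous than the paper's sketch, and it transparently explains where the paper's claims (a), (b), (c) come from: they are the global shadows of the local unique-factorization statements. One could phrase your approach as an algebraic proof of the dual fact that a proper coloring of the facets is exactly a simultaneous triangular structure on the coefficient matrix at every vertex-cone, which the paper leaves implicit.
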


\begin{proof}
First we note that there is a one-one correspondence between $\mathcal{F}(P)$ and $\{x_1, ..., x_m\}$. For convenience, for each facet $F\in \mathcal{F}(P)$, by $x_F$ we denote the corresponding element in $\{x_1, ..., x_m\}$, and conversely, for each element $x$ in $\{x_1, ..., x_m\}$, by $F_x$ we denote the corresponding facet in $\mathcal{F}(P)$.

\vskip .1cm
Assume that $P$ is $n$-colorable. Let $c: \mathcal{F}(P)\longrightarrow [n]$ be the coloring map of $P$. Then there is a partition $\{\mathcal{F}_1, ..., \mathcal{F}_n\}$ of $\mathcal{F}(P)$ such that
$\mathcal{F}_i=\{F\in \mathcal{F}(P)| c(F)=i\}$.  Now for each $\mathcal{F}_i$, set
$$\lambda_i=\sum_{F\in \mathcal{F}_i}x_F.$$
Since all facets of each $\mathcal{F}_i$ are disjoint, this means that  for each vertex $v$ of $P$, there are $n$ facets, say $F_1, ..., F_n$, which come from $\mathcal{F}_1, ..., \mathcal{F}_n$ respectively, such that
$v=F_1\cap\cdots\cap F_n$. Furthermore, we see easily that
$$\sigma_n^{K_P}(x_1, ..., x_m)=\lambda_1\cdots\lambda_n \text{ in } R_{\Bbb F}(K_P)$$
as desired.

\vskip .1cm

Conversely, assume that $\sigma_n^{K_P}(x_1, ..., x_m)=\lambda_1\cdots\lambda_n \text{ in } R(K_P)$, where $\deg \lambda_i=\dim {\Bbb F}$. Since $\sigma_n^{K_P}(x_1, ..., x_m)$ is sequare-free and each monomial of $\sigma_n^{K_P}(x_1, ..., x_m)$ has
 coefficient 1 and corresponds to a vertex of $P$, we see that
  \begin{enumerate}
 \item[(a)]
  any two different $\lambda_i, \lambda_j$  contain no the same monomials;
  \item[(b)] each $\lambda_i$ has the property that for any two monomials $y, y'$ in $\lambda_i$, $yy'=0$ in $R_{\Bbb F}(K_P)$;
   \item[(c)] the coefficients of all monomials of each $\lambda_i$  are 1 (this can easily be proved by performing an induction on $n$).
 \end{enumerate}
   For each $\lambda_i$, set
$$\mathcal{F}_i=\{F_x| x\text{ is a monomial of } \lambda_i\}.$$
Then $\{\mathcal{F}_1, ..., \mathcal{F}_n\}$ gives a partition of $\mathcal{F}(P)$ such that all facets of each $\mathcal{F}_i$ are disjoint. This determines a coloring map $c: \mathcal{F}(P)\longrightarrow [n]$ defined by
$c(F)=i$ if $F\in \mathcal{F}_i$. Thus, $P$ is $n$-colorable.
\end{proof}

\begin{corollary}
An $n$-dimensional
simple polytope $P$ is $n$-colorable
if and only if the $n$-th equivariant Stiefel--Whitney class
$w_n^{\mathbb{T}^m_{\Bbb R}}(\mathcal{T}\mathcal{Z}_P^{\Bbb R})$ or Chern class  $c_n^{\mathbb{T}^m_{\Bbb C}}(\mathcal{T}\mathcal{Z}_P^{\Bbb C})$ is a product
$\lambda_1\cdots \lambda_n$ in $H^*_{\mathbb{T}_{\Bbb F}^m}(\mathcal{Z}^{\Bbb F}_P; R_{\Bbb F})$, where $\lambda_i\in H^{\dim {\Bbb F}}_{\mathbb{T}_{\Bbb F}^m}(\mathcal{Z}^{\Bbb F}_P; R_{\Bbb F})$.
\end{corollary}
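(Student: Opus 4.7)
The plan is to recognize this corollary as a direct translation of Theorem~\ref{n-co} via the cohomological identifications already set up in Section~2. The main work has been done; what remains is to verify that the algebraic decomposition in $R_{\Bbb F}(K_P)$ matches the topological decomposition in equivariant cohomology.

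First, I would invoke Theorem~\ref{face ring} to identify $H^*_{\mathbb{T}_{\Bbb F}^m}(\mathcal{Z}^{\Bbb F}_P; R_{\Bbb F})$ with $R_{\Bbb F}(K_P) = R_{\Bbb F}[x_1,\ldots,x_m]/\mathcal{I}_{K_P}$ as graded rings, where $\deg x_i = \dim {\Bbb F}$. Under this isomorphism, the degree-$\dim {\Bbb F}$ part of the equivariant cohomology corresponds exactly to the $R_{\Bbb F}$-linear span of $x_1,\ldots,x_m$ in $R_{\Bbb F}(K_P)$, because the ideal $\mathcal{I}_{K_P}$ is generated by square-free monomials of degree $\geq 2$ (and so does not affect the linear part). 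Thus an element $\lambda_i \in H^{\dim {\Bbb F}}_{\mathbb{T}_{\Bbb F}^m}(\mathcal{Z}^{\Bbb F}_P;R_{\Bbb F})$ is precisely a linear form in the $x_i$'s, which is the type of factor appearing in the statement of Theorem~\ref{n-co}.

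Next, I would quote the computation carried out in Section~2.2 which shows
\[
w^{\mathbb{T}^m_{\Bbb R}}(\mathcal{T}\mathcal{Z}_P^{\Bbb R}) = \prod_{i=1}^m (1+x_i) \quad \text{and} \quad c^{\mathbb{T}^m_{\Bbb C}}(\mathcal{T}\mathcal{Z}_P^{\Bbb C}) = \prod_{i=1}^m (1+x_i),
\]
so that in either case the $n$-th equivariant characteristic class is exactly $\sigma_n^{K_P}(x_1,\ldots,x_m)$ inside $R_{\Bbb F}(K_P)$. Hence asking for this class to factor as a product $\lambda_1 \cdots \lambda_n$ of degree-$\dim {\Bbb F}$ elements in the equivariant cohomology ring is, under the isomorphism above, literally the same as asking for $\sigma_n^{K_P}(x_1,\ldots,x_m)$ to factor as a product of $n$ factors of degree $\dim {\Bbb F}$ in $R_{\Bbb F}(K_P)$.

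With these identifications in place, the corollary follows immediately by applying Theorem~\ref{n-co}. There is no real obstacle to overcome; the only mild subtlety is checking that the degree conventions match (namely that $\deg x_i = 1$ when ${\Bbb F}=\R$ and $\deg x_i = 2$ when ${\Bbb F}=\C$, so that a ``linear form'' in Theorem~\ref{n-co} is the same as a degree-$\dim{\Bbb F}$ cohomology class), which is explicit in Theorem~\ref{face ring}. Consequently the proof can be written in just a couple of lines.
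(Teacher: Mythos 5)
Your proposal is correct and mirrors exactly what the paper intends: the corollary is an immediate consequence of Theorem~\ref{n-co} once one identifies $H^*_{\mathbb{T}_{\Bbb F}^m}(\mathcal{Z}^{\Bbb F}_P;R_{\Bbb F})$ with $R_{\Bbb F}(K_P)$ via Theorem~\ref{face ring} and recalls from Section~2.2 that the $n$-th equivariant characteristic class equals $\sigma_n^{K_P}(x_1,\ldots,x_m)$. The paper leaves this verification implicit; you have simply spelled it out, including the correct observation that degree-$\dim{\Bbb F}$ elements of the equivariant cohomology are precisely linear forms in the $x_i$.
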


\begin{remark}
In Theorem~\ref{n-co}, $R_{\Bbb F}$ can be replaced by any commutative ring with unit.
\end{remark}

\subsection{An example of 3-colorable polytopes}\label{exam}

The following polytope $P$ with 14 facets  can be  colored by the coloring map $c:\mathcal{F}(P)\longrightarrow [3]=\{1, 2, 3\}$ satisfying that $c^{-1}(1)=\{F_1, F_6, F_7, F_8, F_9, F_{14}\}$, $c^{-1}(2)=\{F_2, F_4, F_{10}, F_{12}\}$, and  $c^{-1}(3)=\{F_3, F_5, F_{11}, F_{13}\}$. Thus this polytope $P$ is 3-colorable,  but obviously  it is not a product, so $\sigma_3^{K_P}(x_1, ..., x_{14})$ is indecomposable in the polynomial ring $R_{\Bbb F}[x_1,..., x_{14}]$.
\[
\setlength{\unitlength}{1973sp}%
\begingroup\makeatletter\ifx\SetFigFont\undefined%
\gdef\SetFigFont#1#2#3#4#5{%
  \reset@font\fontsize{#1}{#2pt}%
  \fontfamily{#3}\fontseries{#4}\fontshape{#5}%
  \selectfont}%
\fi\endgroup%
\begin{picture}(5628,4244)(2551,-5783)
\put(7426,-3736){\makebox(0,0)[lb]{\smash{{\SetFigFont{6}{7.2}{\rmdefault}{\mddefault}{\updefault}{\color[rgb]{0,0,0}$F_{11}$}%
}}}}
\thicklines
\put(7351,-5161){\line( 1,-1){600}}
\put(7951,-5761){\line( 0, 1){4200}}
\put(7951,-1561){\line(-1,-1){600}}
\put(7351,-2161){\line( 0,-1){3000}}
\put(7351,-2161){\line( 1, 1){600}}
\put(7951,-1561){\line(-1, 0){4200}}
\put(3751,-1561){\line( 1,-1){600}}
\put(4351,-2161){\line( 1, 0){3000}}
\put(4351,-2161){\line(-1, 1){600}}
\put(3751,-1561){\line( 0,-1){4200}}
\put(3751,-5761){\line( 1, 1){600}}
\put(4351,-5161){\line( 0, 1){3000}}
\put(4351,-4261){\line( 0,-1){900}}
\put(4351,-5161){\line( 1, 0){900}}
\put(5251,-5161){\line(-1, 1){450}}
\put(4801,-4711){\line(-1, 1){450}}
\put(4351,-5161){\line(-1,-1){600}}
\put(3751,-5761){\line( 1, 0){4200}}
\put(7951,-5761){\line(-1, 1){600}}
\put(7351,-5161){\line(-1, 0){3000}}
\put(5251,-2161){\line(-1, 0){900}}
\put(4351,-2161){\line( 0,-1){900}}
\put(4351,-3061){\line( 1, 1){450}}
\put(4801,-2611){\line( 1, 1){450}}
\put(7351,-3061){\line( 0, 1){900}}
\put(7351,-2161){\line(-1, 0){900}}
\put(6451,-2161){\line( 1,-1){450}}
\put(6901,-2611){\line( 1,-1){450}}
\put(6451,-5161){\line( 1, 0){900}}
\put(7351,-5161){\line( 0, 1){900}}
\put(7351,-4261){\line(-1,-1){450}}
\put(6901,-4711){\line(-1,-1){450}}
\put(6451,-3061){\line( 0,-1){1200}}
\put(6451,-4261){\line( 1,-1){450}}
\put(6901,-4711){\line( 1, 1){450}}
\put(7351,-4261){\line( 0, 1){1200}}
\put(7351,-3061){\line(-1, 1){450}}
\put(6901,-2611){\line(-1,-1){450}}
\put(4351,-3061){\line( 0,-1){1200}}
\put(4351,-4261){\line( 1,-1){450}}
\put(4801,-4711){\line( 1, 1){450}}
\put(5251,-4261){\line( 0, 1){1200}}
\put(5251,-3061){\line(-1, 1){450}}
\put(4801,-2611){\line(-1,-1){450}}
\put(6451,-2161){\line(-1, 0){1200}}
\put(5251,-2161){\line(-1,-1){450}}
\put(4801,-2611){\line( 1,-1){450}}
\put(5251,-3061){\line( 1, 0){1200}}
\put(6451,-3061){\line( 1, 1){450}}
\put(6901,-2611){\line(-1, 1){450}}
\put(6451,-4261){\line(-1, 0){1200}}
\put(5251,-4261){\line(-1,-1){450}}
\put(4801,-4711){\line( 1,-1){450}}
\put(5251,-5161){\line( 1, 0){1200}}
\put(6451,-5161){\line( 1, 1){450}}
\put(6901,-4711){\line(-1, 1){450}}
\put(5626,-3736){\makebox(0,0)[lb]{\smash{{\SetFigFont{6}{7.2}{\rmdefault}{\mddefault}{\updefault}{\color[rgb]{0,0,0}$F_1$}%
}}}}
\put(6676,-3736){\makebox(0,0)[lb]{\smash{{\SetFigFont{6}{7.2}{\rmdefault}{\mddefault}{\updefault}{\color[rgb]{0,0,0}$F_2$}%
}}}}
\put(5551,-2686){\makebox(0,0)[lb]{\smash{{\SetFigFont{6}{7.2}{\rmdefault}{\mddefault}{\updefault}{\color[rgb]{0,0,0}$F_3$}%
}}}}
\put(4576,-3811){\makebox(0,0)[lb]{\smash{{\SetFigFont{6}{7.2}{\rmdefault}{\mddefault}{\updefault}{\color[rgb]{0,0,0}$F_4$}%
}}}}
\put(5551,-4786){\makebox(0,0)[lb]{\smash{{\SetFigFont{6}{7.2}{\rmdefault}{\mddefault}{\updefault}{\color[rgb]{0,0,0}$F_5$}%
}}}}
\put(6901,-5011){\makebox(0,0)[lb]{\smash{{\SetFigFont{6}{7.2}{\rmdefault}{\mddefault}{\updefault}{\color[rgb]{0,0,0}$F_6$}%
}}}}
\put(6826,-2461){\makebox(0,0)[lb]{\smash{{\SetFigFont{6}{7.2}{\rmdefault}{\mddefault}{\updefault}{\color[rgb]{0,0,0}$F_7$}%
}}}}
\put(4426,-4936){\makebox(0,0)[lb]{\smash{{\SetFigFont{6}{7.2}{\rmdefault}{\mddefault}{\updefault}{\color[rgb]{0,0,0}$F_9$}%
}}}}
\put(5476,-5536){\makebox(0,0)[lb]{\smash{{\SetFigFont{6}{7.2}{\rmdefault}{\mddefault}{\updefault}{\color[rgb]{0,0,0}$F_{10}$}%
}}}}
\put(5401,-1936){\makebox(0,0)[lb]{\smash{{\SetFigFont{6}{7.2}{\rmdefault}{\mddefault}{\updefault}{\color[rgb]{0,0,0}$F_{12}$}%
}}}}
\put(3826,-3811){\makebox(0,0)[lb]{\smash{{\SetFigFont{6}{7.2}{\rmdefault}{\mddefault}{\updefault}{\color[rgb]{0,0,0}$F_{13}$}%
}}}}
\put(2551,-4636){\makebox(0,0)[lb]{\smash{{\SetFigFont{6}{7.2}{\rmdefault}{\mddefault}{\updefault}{\color[rgb]{0,0,0}$F_{14}$}%
}}}}
\put(4426,-2461){\makebox(0,0)[lb]{\smash{{\SetFigFont{6}{7.2}{\rmdefault}{\mddefault}{\updefault}{\color[rgb]{0,0,0}$F_8$}%
}}}}
\thinlines
{\color[rgb]{1,0,0}\put(5251,-4261){\framebox(1200,1200){}}
}%
\end{picture}%

\]
However, in the Stanley--Reisner face ring $R_{\Bbb F}(K_P)=R_{\Bbb F}[x_1,..., x_{14}]/\mathcal{I}_{K_P}$,
\begin{eqnarray*}
 &&\sigma_3^{K_P}(x_1, ..., x_{14})\\
 &=& (x_1+x_6+x_7+x_8+x_9+x_{14})(x_2+x_4+x_{10}+x_{12})(x_3+x_5+x_{11}+x_{13})
 \end{eqnarray*}
which is decomposable.
 \subsection{More general case--$\ell$-colorable polytopes}
 Actually we can carry out our work on more general case.  Let $P$ be a simple polytope of dimension $n$ with $m$ facets, and let $\ell\geq n$ be an integer.  We say that $P$ is {\em $\ell$-colorable} if there is a coloring map $c: \mathcal{F}(P)\longrightarrow [\ell]=\{1, ..., \ell\}$ such that $c(F_i)\not=c(F_j)$ whenever $F_i\cap F_j\not=\varnothing$, where $\mathcal{F}(P)$ is the set of all facets of $P$.

 \vskip .1cm
 Equivalently, we can also say that  $P$ is $\ell$-colorable if and only if there is a  function
$\lambda: \mathcal{F}(P)\longrightarrow R_{\Bbb F}^\ell$ such that $\lambda(F_i)\not=\lambda(F_j)$ whenever $F_i\cap F_j\not=\varnothing$ and the image $\im \lambda=\{e_1, ..., e_\ell\}$, where $\{e_1, ..., e_\ell\}$ is a basis of $R_{\Bbb F}^\ell$.
We see that $P$ is always $m$-colorable. According to the construction method in~\cite[1.5. The basic construction]{dj},
we can use the pair $(P, \lambda)$ to construct a closed manifold $M_{\Bbb F}(P, \lambda)$ of dimension $n+(\dim {\Bbb F}-1)\ell$, with an $\mathbb{T}_{\Bbb F}^{\ell}$-action. In particular, if $\ell=m$, then
$M_{\Bbb F}(P, \lambda)$ is exactly the (real) moment-angle manifold $\mathcal{Z}_P^{\Bbb F}$.

\vskip .1cm

The following result is a generalization of Theorem~\ref{n-co}.
\begin{theorem} \label{l-co}
Let $P$ be a simple polytope of dimension $n$ with $m$ facets, and let $\ell\geq n$ be an integer.
Then
 $P$ is $\ell$-colorable
if and only if there exist $\ell$ homogenous  polynomial $\lambda_1, ..., \lambda_\ell$ such that  $\sigma_n^{K_P}(x_1, ..., x_m)=\sigma_n(\lambda_1, ...,\lambda_\ell)$ in $R(K_P)=R[x_1, ..., x_m]/\mathcal{I}_{K_P}$, where
each $\lambda_i$ is a linear combination of $x_1, ..., x_m$
\end{theorem}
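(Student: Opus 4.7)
The forward direction mimics the construction in the proof of Theorem~\ref{n-co}. Given an $\ell$-coloring $c:\mathcal{F}(P)\to[\ell]$, partition the facets into color classes $\mathcal{F}_i=c^{-1}(i)$ and set $\lambda_i=\sum_{F\in\mathcal{F}_i}x_F$. Since facets of the same color are pairwise disjoint, any product of two $x_F$'s drawn from the same class vanishes in $R(K_P)$. Hence for an $n$-subset $I=\{i_1<\cdots<i_n\}\subseteq[\ell]$, the product $\lambda_{i_1}\cdots\lambda_{i_n}$ in $R(K_P)$ collapses to the sum of monomials $x_{F_1}\cdots x_{F_n}$ with $F_k\in\mathcal{F}_{i_k}$ and $F_1\cap\cdots\cap F_n\ne\varnothing$, i.e., the sum over those vertices of $P$ whose $n$ incident facets have color set exactly $\{i_1,\ldots,i_n\}$. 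Summing over all $n$-subsets $I$ recovers $\sigma_n^{K_P}(x_1,\ldots,x_m)$, since every vertex determines a unique such $I$.

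For the reverse direction, suppose $\sigma_n^{K_P}(x_1,\ldots,x_m)=\sigma_n(\lambda_1,\ldots,\lambda_\ell)$ in $R(K_P)$ with $\lambda_i=\sum_{j=1}^m c_{ij}x_j$, and let $S_i=\{j\in[m]:c_{ij}\ne 0\}$ be the support of $\lambda_i$. The plan is to establish (i) the supports $S_i$ are pairwise disjoint, (ii) for each $i$ the facets $\{F_j:j\in S_i\}$ are pairwise disjoint in $P$, and (iii) after rescaling every nonzero $c_{ij}$ can be taken equal to $1$. Granted (i)--(iii), the assignment $c(F_j):=i$ for the unique $i$ with $j\in S_i$ is a well-defined $\ell$-coloring.

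The core of the argument is to analyze how each vertex monomial $m_v=x_{j_1}\cdots x_{j_n}$ arises in the expansion of $\sigma_n(\lambda_1,\ldots,\lambda_\ell)$ modulo $\mathcal{I}_{K_P}$. Because $m_v$ is squarefree and occurs with coefficient $1$, and because every contributing monomial is obtained by selecting an $n$-subset $I\subseteq[\ell]$ and drawing one variable from each $\lambda_i$ with $i\in I$, the variables $x_{j_1},\ldots,x_{j_n}$ must be drawn from $n$ distinct factors. In particular, if some pair $j,k\in\{j_1,\ldots,j_n\}$ lay in a common $S_i$ and in no other $S_{i'}$, then $m_v$ could not be produced at all, yielding a contradiction; this observation, propagated across all vertices, forces (i) and (ii). The final normalization (iii) follows by an induction on vertices, paralleling point (c) in the proof of Theorem~\ref{n-co}.

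The main obstacle is precisely in this reverse direction: unlike Theorem~\ref{n-co}, where $\sigma_n^{K_P}$ factors as an honest product of $n$ linear forms and each vertex monomial has exactly one source in the expansion, here $\ell\geq n$ and one has $\binom{\ell}{n}$ distinct $n$-subsets $I$ potentially contributing to each vertex monomial. A priori the required coefficient of $1$ could arise from nontrivial cancellation across several such $I$'s rather than from a single contribution. The crux is to show that such cancellations cannot occur in $R(K_P)$, and hence that the supports $S_i$ are forced to be disjoint and to index independent sets of facets, rendering the extracted coloring both well-defined and valid.
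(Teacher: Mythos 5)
You should note first that the paper itself does not supply a proof of Theorem~\ref{l-co}: it simply asserts that the proof ``follows closely that of Theorem~\ref{n-co} in a very similar way'' and leaves the details to the reader. So there is no argument in the paper against which a blind attempt can be compared line by line; what can be judged is whether your proposal actually closes the gap the authors waved at. Your forward direction does, and cleanly: the observation that each vertex of the simple polytope $P$ lies on exactly $n$ facets, which under an $\ell$-coloring carry $n$ distinct colors, so that each vertex monomial is produced from exactly one $n$-subset $I\subseteq[\ell]$ and with coefficient $1$, is exactly the analogue of the forward half of Theorem~\ref{n-co}.

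The reverse direction, however, is where your proposal has a genuine gap, and you yourself identify it in your last paragraph. The claim in your middle paragraph --- ``if some pair $j,k\in\{j_1,\ldots,j_n\}$ lay in a common $S_i$ \emph{and in no other} $S_{i'}$, then $m_v$ could not be produced at all \ldots this observation, propagated across all vertices, forces (i) and (ii)'' --- is not an argument for (i) and (ii), because the italicized hypothesis is precisely what you do not get to assume. When $\ell>n$, a variable $x_j$ may appear in the support of several $\lambda_i$, a vertex monomial $x_{j_1}\cdots x_{j_n}$ may be assembled from several of the $\binom{\ell}{n}$ expansion terms $\lambda_{i_1}\cdots\lambda_{i_n}$, and the requirement that the total coefficient be $1$ (and that all non-vertex square-free monomials, and all monomials with repeated variables, have total coefficient $0$ modulo $\mathcal{I}_{K_P}$) is a system of equations in which cancellation is a priori possible. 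The claim you need is that this system forces, after invertible column operations on the matrix $(c_{ij})$, each facet to fall in exactly one support $S_i$ with the corresponding facets pairwise disjoint; but no argument to that effect is given, and ``the crux is to show that such cancellations cannot occur'' is a statement of the problem, not a solution. Additionally, the normalization step (iii) ``by rescaling'' is delicate for a general commutative ring $R$ (which is the hypothesis of the theorem): the coefficients $c_{ij}$ need not be units, so one must actually derive, not assume, that the surviving $c_{ij}$ are $1$. As written, the reverse implication is missing its central lemma.
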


\begin{proof}
The proof follows closely that of Theorem~\ref{n-co} in a very similar way. We would
like to leave it as an exercise to the reader.
\end{proof}

\subsection{The work of  Notbohm in~\cite{n1, n2}}\label{Notbohm}

For an $n$-colorable polytope $P$ with $m$ facets, Davis and Januszkiewicz in~\cite[6.2]{dj} studied a pullback $\pi_{\Bbb F}: M_{\Bbb F}\longrightarrow P$ of the linear model, so the image of the characteristic function
$\lambda: \mathcal{F}(P)\longrightarrow R_{\Bbb F}^n$ is some basis  $\{e_1, ..., e_n\}$ of $R_{\Bbb F}^n$.   Associated with the bundle $\mathbb{L}_1\oplus\cdots\oplus \mathbb{L}_m$ over $E\mathbb{T}_{\Bbb F}^m\times_{\mathbb{T}_{\Bbb F}^m}\mathcal{Z}^{\Bbb F}_P$, they used the sequence $\lambda=(\lambda_1, ..., \lambda_n)$  to construct $n$ line bundles $E_i=\mathbb{L}_1^{\otimes \lambda_{i1}}\otimes\cdots\otimes
 \mathbb{L}_m^{\otimes \lambda_{im}}, i=1, ..., n,$ with the first characteristic class of each $E_i$ just being $\lambda_i$ such that
$$E_1\oplus\cdots\oplus E_n\oplus\underline{\Bbb F}^{m-n}\cong \mathbb{L}_1\oplus\cdots\oplus \mathbb{L}_m $$
where each $\lambda_i=\lambda_{i1}x_1+\cdots+\lambda_{im}x_m\in R_{\Bbb F}(K_P)=R_{\Bbb F}[x_1, ..., x_m]/\mathcal{I}_{K_P}$ and $\underline{\Bbb F}^{m-n}$ denotes the trivial vector bundle of dimension $m-n$. In~\cite{n1, n2}, Notbohm generalized this to the study of the vector bundle over  Davis--Januszkiewicz space $DJ(K)$  for an $(n-1)$-dimensional simplicial complex $K$. Notbohm constructed a
particular $n$-dimensional complex  vector bundle over the associated Davis--Januszkiewicz space whose Chern classes are given by the elementary symmetric
polynomials in the generators of the Stanley--Reisner algebra. Furthermore, Notbohm showed
that the isomorphism type of this complex vector bundle as well as of its realification
are completely determined by its characteristic classes. As an application, Notbohm proved
 that coloring properties of the simplicial complex are completely determined by
splitting properties of this bundle.

\vskip .1cm

Together with Theorems 1.1-1.2 and Corollary 1.8 in~\cite{n2}, Notbohm gave the following theorem
\begin{theorem}[Notbohm]\label{Notb}
Let $P$ be a simple polytope of dimension $n$ with $m$ facets, and let $\ell\geq n$ be an integer.
The following statements are equivalent.
\begin{enumerate}
 \item
 $P$ is $\ell$-colorable.
 \item  There are $\ell$ line bundles $E_i (i=1, ..., \ell)$ such that
$$E_1\oplus\cdots\oplus E_\ell\oplus\underline{\Bbb F}^{m-\ell}\cong \mathbb{L}_1\oplus\cdots\oplus \mathbb{L}_m. $$
\item $c_{\Bbb F}(E_1\oplus\cdots\oplus E_\ell)=c_{\Bbb F}(\mathbb{L}_1\oplus\cdots\oplus \mathbb{L}_m)$ where $c_{\Bbb F}$ denotes the total Chern class if ${\Bbb F}={\Bbb C}$ and the total Stiefel--Whitney class if ${\Bbb F}={\Bbb R}$.
\end{enumerate}
\end{theorem}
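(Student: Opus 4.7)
The plan is to extend the proof of Theorem~\ref{n-co} in the natural way, replacing the product $\lambda_1 \cdots \lambda_n$ of $n$ factors with the elementary symmetric polynomial $\sigma_n(\lambda_1, \dots, \lambda_\ell)$ of $\ell \ge n$ linear forms. As in Theorem~\ref{n-co}, I will identify facets of $P$ with variables via $F \leftrightarrow x_F$ throughout.

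For the $(\Rightarrow)$ direction, given an $\ell$-coloring $c: \mathcal{F}(P) \to [\ell]$ with color classes $\mathcal{F}_i = c^{-1}(i)$ (some possibly empty), I would set
$$\lambda_i = \sum_{F \in \mathcal{F}_i} x_F \in R(K_P), \qquad i = 1, \dots, \ell,$$
and expand
$$\sigma_n(\lambda_1, \dots, \lambda_\ell) = \sum_{1 \le i_1 < \cdots < i_n \le \ell} \lambda_{i_1} \cdots \lambda_{i_n}.$$
A summand $x_{F_1} \cdots x_{F_n}$ with $F_k \in \mathcal{F}_{i_k}$ survives in $R(K_P)$ iff $F_1, \dots, F_n$ meet at a vertex of $P$. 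Conversely each vertex $v$ of $P$ is the intersection of exactly $n$ facets, which the coloring property forces to carry pairwise distinct colors $\{i_1 < \cdots < i_n\}$, so $v$ contributes its monomial exactly once (in the summand indexed by this set). Hence $\sigma_n(\lambda_1, \dots, \lambda_\ell) = \sigma_n^{K_P}(x_1, \dots, x_m)$.

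For the $(\Leftarrow)$ direction, suppose $\sigma_n(\lambda_1, \dots, \lambda_\ell) = \sigma_n^{K_P}$ in $R(K_P)$ with $\lambda_i = \sum_j a_{ij} x_j$. Mirroring the three-step structural analysis of Theorem~\ref{n-co}, I will show: (a) each $x_j$ has nonzero coefficient in exactly one $\lambda_i$; (b) if $x_j$ and $x_k$ both occur in the same $\lambda_i$, then $x_j x_k = 0$ in $R(K_P)$; (c) each nonzero $a_{ij}$ equals $1$. Existence in (a) is immediate since every $x_j$ sits inside some vertex monomial on the right, hence must appear somewhere on the left; uniqueness comes from comparing the coefficient of the non-squarefree monomial $x_j^2 x_{j_3} \cdots x_{j_n}$, where $\{j, j_3, \dots, j_n\}$ is a face (so the monomial is nonzero in $R(K_P)$) while the right-hand side is squarefree. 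For (b), if $x_j, x_k$ both lie in $\lambda_i$ and $\{j,k\}$ were a face, extend to a maximal face $\{j, k, j_3, \dots, j_n\}$; its vertex monomial appears on the right, but cannot be produced on the left, since $x_j$ and $x_k$ are only available in the single form $\lambda_i$, which appears at most once in any summand of $\sigma_n(\lambda_1, \dots, \lambda_\ell)$. Part (c) follows by induction on $n$ exactly as in Theorem~\ref{n-co}. Then $c(F_j) := i$ whenever $x_j$ appears in $\lambda_i$ gives a well-defined map $\mathcal{F}(P) \to [\ell]$, and (b) is exactly the coloring condition.

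The main obstacle is step (a): one must track carefully which summands of $\sigma_n(\lambda_1, \dots, \lambda_\ell)$ contribute to the coefficient of $x_j^2 x_{j_3} \cdots x_{j_n}$ and verify that no cancellation can rescue a configuration in which $x_j$ appears in two distinct $\lambda_i$'s, since the relevant monomial is genuinely nonzero in $R(K_P)$. This is the combinatorial crux, directly parallel to the coefficient-comparison used in Theorem~\ref{n-co}; once (a)--(c) are established the coloring $c$ is forced, and the remaining verifications are routine.
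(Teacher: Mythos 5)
You have proved (modulo the gap you yourself flag in step~(a)) Theorem~\ref{l-co}, not Theorem~\ref{Notb}. Theorem~\ref{Notb} is Notbohm's bundle-theoretic theorem, which the paper states \emph{without proof} and attributes to \cite{n1,n2}: conditions~(2) and~(3) concern line bundles $E_i$ over the Davis--Januszkiewicz space $E\mathbb{T}_{\Bbb F}^m\times_{\mathbb{T}_{\Bbb F}^m}\mathcal{Z}^{\Bbb F}_P$ that stably split $\mathbb{L}_1\oplus\cdots\oplus\mathbb{L}_m$, and equality of \emph{total} characteristic classes $c_{\Bbb F}(E_1\oplus\cdots\oplus E_\ell)=\prod_j(1+x_j)$, not just the top degree-$n$ class $\sigma_n^{K_P}$. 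Your argument lives entirely in the Stanley--Reisner ring, deals only with $\sigma_n^{K_P}$, and never constructs or compares bundles; so as written it does not prove the stated theorem.

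In principle part of the gap is bridgeable: $(2)\Rightarrow(3)$ is immediate, and $(3)\Rightarrow(1)$ follows from your $(\Leftarrow)$ argument by restricting the identity $\prod_i(1+\lambda_i)=\prod_j(1+x_j)$ to degree $n$. But the remaining implication $(1)\Rightarrow(2)$ is a genuine bundle statement: from a coloring one sets $E_i=\bigotimes_{F_j\in\mathcal{F}_i}\mathbb{L}_j$ and must prove $\bigoplus_{F_j\in\mathcal{F}_i}\mathbb{L}_j\cong E_i\oplus\underline{\Bbb F}^{|\mathcal{F}_i|-1}$ over the Davis--Januszkiewicz space, which uses the cell structure of that space (this is \cite[6.2]{dj} for $\ell=n$ and Notbohm's extension for $\ell>n$) and cannot be extracted from a computation in $R_{\Bbb F}(K_P)$. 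This is precisely the distinction the paper draws in the Remark that follows Theorem~\ref{Notb}: the paper's own criterion (Theorem~\ref{l-co}, which is what you actually sketched, and which the authors left as an exercise) is a purely face-ring condition on $\sigma_n^{K_P}$ alone, and is deliberately contrasted with Notbohm's criterion, which engages the whole total class and the bundle isomorphism type. Your proposal belongs under Theorem~\ref{l-co}, not here.
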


\begin{remark}
In~Theorem~\ref{Notb}(3), write $c_{\Bbb F}(E_i)=1+\lambda_i$ where $\lambda_i\in R(K_P)=R[x_1, ..., x_m]/\mathcal{I}_{K_P}$ with $\deg\lambda_i=\dim{\Bbb F}$. Then we have that
$$\prod_{i=1}^\ell(1+\lambda_i)=c_{\Bbb F}(E_1\oplus\cdots\oplus E_\ell)=c_{\Bbb F}(\mathbb{L}_1\oplus\cdots\oplus \mathbb{L}_m)=\prod_{j=1}^m(1+x_j)$$ so for $i\leq n$,
$$\sigma_i^{K_P}(x_1, ..., x_m)=\sigma_i(\lambda_1, ...,\lambda_\ell).$$
This means  that in Theorem~\ref{Notb}, all $\sigma_i^{K_P}(x_1, ..., x_m), i\leq n$ would be involved for $P$ to be $\ell$-colorable. However, Theorems~\ref{n-co}--~\ref{l-co} tell us that we only need to consider the $n$-th elementary symmetric polynomial $\sigma_n^{K_P}(x_1, ..., x_m)$.
\end{remark}

\section{Stanley--Reisner  exterior face ring and Buchstaber invariant}

\subsection{Stanley--Reisner  exterior face ring}\label{exterior}
Let $K$ be a simplicial complex on vertex set $[m]=\{1, ..., m\}$. Consider the exterior algebra $\wedge_{{\Bbb Z}}[x_1, ..., x_m]$  over ${\Bbb Z}$. In a similar way to Stanley--Reisner face ring, based upon the combinatorial structure of $K$, we may define the quotient ring of  the exterior algebra $\wedge_{{\Bbb Z}}[x_1, ..., x_m]$
$$\mathcal{E}_{\Bbb C}(K)=\wedge_{{\Bbb Z}}[x_1, ..., x_m]/\mathcal{I}_{K}$$
where $\mathcal{I}_{K}$ is  the ideal generated by those square-free monomials $x_{i_1}\wedge\cdots\wedge x_{i_r}$ for which $\{i_1, ...,  i_r\}\not\in K$.
We call $\mathcal{E}_{\Bbb C}(K)$ the {\em Stanley--Reisner exterior face ring of $K$}.

\vskip .1cm

Clearly, each polynomial in $\mathcal{E}_{\Bbb C}(K)$ is square-free.
We can define a differential $\partial$ on $\mathcal{E}_{\Bbb C}(K)$ in a usual way. Then we have
\begin{lemma}\label{orient}
 $(\mathcal{E}_{\Bbb C}(K), \partial)$ is isomorphic to the chain complex $\mathcal{C}(K)$ of $K$.
\end{lemma}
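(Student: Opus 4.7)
The plan is to exhibit the isomorphism explicitly on a basis and then verify that it intertwines the differentials. First I would fix the standard linear order $1<2<\cdots<m$ on the vertex set and recall that the exterior algebra $\wedge_{\Z}[x_1,\ldots,x_m]$ is a free $\Z$-module with basis the ordered wedge products $x_{i_1}\wedge\cdots\wedge x_{i_r}$ for $i_1<\cdots<i_r$. Because $K$ is closed under taking subsets, any wedge of such a ``bad'' (non-simplex) monomial with another ordered monomial is either zero or again a monomial whose support contains a non-face of $K$. Consequently $\mathcal{I}_{K}$ is precisely the $\Z$-span of the ordered wedge monomials whose support is not a simplex of $K$, and the quotient $\mathcal{E}_{\C}(K)$ inherits the free $\Z$-basis
\[
\{\,x_{i_1}\wedge\cdots\wedge x_{i_r}:\{i_1,\ldots,i_r\}\in K,\ i_1<\cdots<i_r\,\}.
\]

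Second, I would define $\phi\colon\mathcal{E}_{\C}(K)\to\mathcal{C}(K)$ on basis elements by $\phi(x_{i_1}\wedge\cdots\wedge x_{i_r})=[i_1,\ldots,i_r]$, where the right-hand side is the oriented simplex with increasing orientation. This is a bijection onto the standard basis of $\mathcal{C}(K)$ and respects the natural gradings up to the usual shift (wedge-length $r$ corresponds to simplicial dimension $r-1$), hence is a $\Z$-linear isomorphism of graded modules.

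Third, I would take as the differential on $\mathcal{E}_{\C}(K)$ the map defined on the simplex-indexed basis by
\[
\partial(x_{i_1}\wedge\cdots\wedge x_{i_r})=\sum_{j=1}^{r}(-1)^{j-1}\,x_{i_1}\wedge\cdots\wedge\widehat{x}_{i_j}\wedge\cdots\wedge x_{i_r}.
\]
Each omitted-index subset is again in $K$ by downward closedness, so the right-hand side lies in $\mathcal{E}_{\C}(K)$. Comparing with the classical simplicial boundary $\partial[i_1,\ldots,i_r]=\sum_j(-1)^{j-1}[i_1,\ldots,\widehat{i}_j,\ldots,i_r]$ then gives $\phi\circ\partial=\partial\circ\phi$ term by term, and the lemma follows.

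The only genuine care is required in the definition of $\partial$: the naive formula on the full exterior algebra does \emph{not} descend to the quotient in general, because the boundary of a ``bad'' generator of $\mathcal{I}_{K}$ can have ``good'' summands lying outside $\mathcal{I}_{K}$. One must therefore define $\partial$ directly on the simplex-indexed basis, and this is precisely what ``in a usual way'' means here; once that convention is in place, the isomorphism with $\mathcal{C}(K)$ is essentially tautological.
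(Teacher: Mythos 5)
Your proof is correct and follows essentially the same route as the paper: identify a free $\Z$-basis of $\mathcal{E}_\C(K)$ indexed by simplices of $K$ (equivalently, fix an orientation of $K$), map each basis monomial to the corresponding oriented simplex, and check compatibility with the boundary. Your closing remark — that the naive differential on the full exterior algebra does \emph{not} descend because the boundary of a minimal non-face has only ``good'' summands, so $\partial$ must be defined directly on the simplex-indexed basis — is a genuine subtlety the paper passes over silently and is worth keeping explicit.
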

\begin{proof}
Given an orientation of $K$,  each oriented simplex $a=\langle i_1, ..., i_r\rangle$ in $K$ determines a monomial ${\bf x}_a=x_{i_1}\wedge\cdots\wedge x_{i_r}$ in
$\mathcal{E}_{\Bbb C}(K)$ such that the ordering of the product $x_{i_1}\wedge\cdots\wedge x_{i_r}$ agrees with the orientation of $a$. Then the required isomorphism is given by mapping $a$ to ${\bf x}_a$.
\end{proof}

In the proof of Lemma~\ref{orient}, we call ${\bf x}_a$ an {\em oriented monomial} corresponding to the oriented simplex $a$.

\vskip .1cm

Set $\mathcal{E}_{\Bbb R}(K)=\mathcal{E}_{\Bbb C}(K)\otimes {\Bbb Z}_2$. This is the Stanley--Reisner exterior face ring over ${\Bbb Z}_2$, which is commutative. We will take $\deg x_i=\dim {\Bbb F}$ for the generators $x_i$ of $\mathcal{E}_{\Bbb F}(K)$ in the following discussion.

\vskip .1cm

Compare with Stanley--Reisner  face ring $R_{\Bbb F}(K)$, if ${\Bbb F}=\R$, then it is easy to see that there is a natural module embedding $\mathcal{E}_{\Bbb R}(K)\hookrightarrow R_{\Bbb R}(K)$, so  $\mathcal{E}_{\Bbb R}(K)$ can be regarded as a submodule of $R_{\Bbb R}(K)$. On the other hand, there is a surjective projection $R_{\Bbb R}(K)\longrightarrow \mathcal{E}_{\Bbb R}(K)$. This projection is a ring homomorphism. Thus $\mathcal{E}_{\Bbb R}(K)$ is isomorphic to the quotient
$R_{\Bbb R}(K)/\langle x_i^2|i=1, ..., m\rangle$ as rings.

\vskip .1cm

If ${\Bbb F}=\C$, this case depends upon a choice of orientations. Given an orientation of $K$,
consider all oriented monomials $x_{i_1}\wedge\cdots\wedge x_{i_r}$ coreesponding to all oriented simplices in $K$.  These oriented monomials form a basis of
$\mathcal{E}_{\Bbb C}(K)$ as a module. Fix this basis,  then there is still a module embedding $\mathcal{E}_{\Bbb C}(K)\hookrightarrow R_{\Bbb C}(K)$ defined by mapping
$x_{i_1}\wedge\cdots\wedge x_{i_r}$ to $x_{i_1}\cdots x_{i_r}$. Of course, we can also define a projection $R_{\Bbb C}(K)\longrightarrow \mathcal{E}_{\Bbb C}(K)$ by mapping non sequare-free polynomials to zero and mapping
sequare free monomials to corresponding elements of the basis.
However, this projection is only a module homomorphism rather than a ring homomorphism. Thus, as graded modules, $\mathcal{E}_{\Bbb C}(K)$ is isomorphic to $R_{\Bbb C}(K)/\langle x_i^2|i=1, ..., m\rangle$.
In particular, $\mathcal{E}_{\Bbb C}(K)^{(2)}\cong R_{\Bbb C}(K)^{(2)}$, where $\mathcal{E}_{\Bbb C}(K)^{(2)}$ and  $R_{\Bbb C}(K)^{(2)}$ are the graded modules of degree 2 in $\mathcal{E}_{\Bbb C}(K)$ and  $R_{\Bbb C}(K)$, respectively.

\subsection{A criterion for Buchstaber invariant $s_{\Bbb C}(P)=m-n$}
Let $P$ be an $n$-dimensional simple polytope with $m$ facets $F_1, ..., F_m$. In the Stanley--Reisner exterior face ring $\mathcal{E}_{\Bbb C}(K_P)=\wedge_{\Z}[x_1, ..., x_m]/\mathcal{I}_{K_P}$, we can still define elementary symmetric polynomials $\sigma_i^{K_P}(x_1, ..., x_m)$, but here our definition of $\sigma_i^{K_P}(x_1, ..., x_m)$ depends upon the choices of the orientations of $K_P$.

\vskip .1cm

Given an orientation $\mathfrak{o}$ of $K_P$, define the $i$-th elementary symmetric polynomial as
$$\sigma_i^{K_P, \mathfrak{o}}(x_1, ..., x_m)=\sum_{a\in K_P \atop
\dim a=i-1}{\bf x}_a$$
where ${\bf x}_a$ is the oriented monomial of degree $2i$ corresponding to $a$, as defined as before.

\vskip .1cm

We shall show the following necessary and sufficient condition for $s_{\Bbb C}(P)=m-n$ in terms of the decomposability of $\sigma_n^{K_P, \mathfrak{o}}(x_1, ..., x_m)$.

\begin{theorem}\label{cpx}
The Buchstaber invariant $s_{\Bbb C}(P)=m-n$ if and only if there is an orientation $\mathfrak{o}$ of $K_P$ such that $\sigma_n^{K_P, \mathfrak{o}}(x_1, ..., x_m)$ can be decomposed as the product $\lambda_1\wedge\cdots\wedge\lambda_n$ of $n$ factors of degree 2 in $\mathcal{E}_{\Bbb C}(K_P)$.
\end{theorem}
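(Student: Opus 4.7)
The plan is to reduce the statement to a coefficient-matching calculation in $\mathcal{E}_{\Bbb C}(K_P)$ that translates the wedge decomposition into the existence of a $\Z$-valued characteristic function on $P$. I will first invoke the proposition of Subsection~\ref{char}: $s_{\Bbb C}(P)=m-n$ is equivalent to the existence of a characteristic function $\lambda:\mathcal{F}(P)\to\Z^n$, i.e. an $n\times m$ integer matrix $\Lambda=(\lambda_{ij})$ such that $\det(\Lambda_I)=\pm 1$ for every $I=\{i_1<\cdots<i_n\}$ indexing a vertex $F_{i_1}\cap\cdots\cap F_{i_n}$ of $P$, equivalently every maximal simplex of $K_P$.

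Next I would expand the candidate wedge product. Since $\{x_1,\ldots,x_m\}$ freely generates the degree-$2$ part $\mathcal{E}_{\Bbb C}(K_P)^{(2)}$, every degree-$2$ element has a unique expression $\lambda_i=\sum_{j=1}^m\lambda_{ij}x_j$. Using $x_i\wedge x_j=-x_j\wedge x_i$, a direct expansion yields
$$\lambda_1\wedge\cdots\wedge\lambda_n=\sum_{I=\{i_1<\cdots<i_n\}}\det(\Lambda_I)\cdot x_{i_1}\wedge\cdots\wedge x_{i_n},$$
and $x_{i_1}\wedge\cdots\wedge x_{i_n}$ vanishes in $\mathcal{E}_{\Bbb C}(K_P)$ unless $I\in K_P$, so the sum reduces to one over maximal simplices of $K_P$. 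On the other side, if the orientation $\mathfrak{o}$ on such an $a=I$ is represented by $\langle i_{\pi(1)},\ldots,i_{\pi(n)}\rangle$, then $\mathbf{x}_a=\mathrm{sgn}(\pi)\,x_{i_1}\wedge\cdots\wedge x_{i_n}$, hence
$$\sigma_n^{K_P,\mathfrak{o}}(x_1,\ldots,x_m)=\sum_{I}\epsilon_I^{\mathfrak{o}}\cdot x_{i_1}\wedge\cdots\wedge x_{i_n},\qquad \epsilon_I^{\mathfrak{o}}\in\{\pm 1\}.$$
Comparing coefficients of the basis monomials $x_{i_1}\wedge\cdots\wedge x_{i_n}$, the identity $\sigma_n^{K_P,\mathfrak{o}}=\lambda_1\wedge\cdots\wedge\lambda_n$ is equivalent to $\det(\Lambda_I)=\epsilon_I^{\mathfrak{o}}$ for every maximal simplex $I$.

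Both implications now drop out. For the ``if'' direction, from a decomposition I extract $\Lambda$ by reading off the coefficients $\lambda_{ij}$ (which are well-defined by freeness of $\mathcal{E}_{\Bbb C}(K_P)^{(2)}$), the coefficient matching forces $\det(\Lambda_I)=\pm 1$ for every vertex, so $\Lambda$ is a characteristic function and $s_{\Bbb C}(P)=m-n$. For the ``only if'' direction, starting from a characteristic matrix $\Lambda$, I set $\lambda_i:=\sum_j\lambda_{ij}x_j\in\mathcal{E}_{\Bbb C}(K_P)^{(2)}$ and, on each top simplex $I$ independently, pick the orientation so that $\epsilon_I^{\mathfrak{o}}=\det(\Lambda_I)$; this is legitimate because both sides lie in $\{\pm 1\}$ and a single transposition flips $\epsilon_I^{\mathfrak{o}}$ on that simplex alone.

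The main subtle point is really just sign bookkeeping in the exterior algebra: one has to verify that an independent choice of orientation on each top simplex of $K_P$ is allowed (it is, since $\sigma_n^{K_P,\mathfrak{o}}$ only involves the orientations of the top-dimensional simplices, with no compatibility required), and that the $\lambda_{ij}$ are unambiguously recovered from $\lambda_i$ because $\mathcal{E}_{\Bbb C}(K_P)^{(2)}$ is a free $\Z$-module on $x_1,\ldots,x_m$. Once these points are clear, the equivalence is just the statement that the minor expansion of a wedge of linear forms in the Stanley--Reisner exterior face ring picks out precisely the characteristic-function condition on the vertex-indexed minors.
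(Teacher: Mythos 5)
Your proof is correct and follows essentially the same route as the paper's: both extract the coefficient matrix $\Lambda$ from the linear forms $\lambda_i$ in the free $\Z$-module $\mathcal{E}_{\Bbb C}(K_P)^{(2)}$, expand $\lambda_1\wedge\cdots\wedge\lambda_n$ as a sum of $\det(\Lambda_I)\, x_{i_1}\wedge\cdots\wedge x_{i_n}$ over top simplices $I$, and observe that matching this with $\sigma_n^{K_P,\mathfrak{o}}$ is exactly the condition $\det(\Lambda_I)=\pm 1$, with the orientation $\mathfrak{o}$ chosen per top simplex to absorb the signs. The paper writes the two implications out with a slightly more explicit comparison between a fixed reference orientation $\mathfrak{o}'$ and $\mathfrak{o}$, but the underlying argument — minor expansion plus coefficient matching — is the same as yours.
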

\begin{proof}
We firstly choose the orientation $\mathfrak{o'}$ of $K_P$ which agrees with the ordering of its vertex set $[m]$ (corresponding to the facet set $\{F_1, ..., F_m\}$ of $P$).

\vskip .1cm

Suppose that $s_{\Bbb C}(P)=m-n$ and $\lambda$ is a characteristic function on $P$. As mentioned in Subsection~\ref{char}, it determines an $(n\times m)$-matrix $\Lambda=(\lambda_{ij})$ and a sequence $\lambda=(\lambda_1, ..., \lambda_n)$ in $R_{\Bbb C}(K_P)^{(2)}$. For any oriented $(n-1)$-simplex $a=\langle i_1, ..., i_n\rangle\in K_P$ with $i_1<\cdots <i_n$,  denote by $\Lambda_a=(\lambda(F_{i_1}), ..., \lambda(F_{i_n}))$ the corresponding $(n\times n)$-matrix . Then $\det\Lambda_a=\pm 1$. Now regarding $\lambda_1, ..., \lambda_n$ as the elements in $\mathcal{E}_{\Bbb C}(K_P)^{(2)}$ since $R_{\Bbb C}(K_P)^{(2)}\cong \mathcal{E}_{\Bbb C}(K_P)^{(2)}$, let us consider the product $\lambda_1\wedge\cdots\wedge\lambda_n$ in $\mathcal{E}_{\Bbb C}(K_P)$. Since this product $\lambda_1\wedge\cdots\wedge\lambda_n$ is of degree  $2n$, it is actually the linear combination of ${\bf x}_a$ with integer coefficients, where $a$ runs over all oriented $(n-1)$-simplices of $K_P$ with the given orientation $\mathfrak{o'}$. We only need to figure out the coefficient for every ${\bf x}_a$.

Without the loss of generality, suppose that $a_0=\langle 1, ..., n\rangle\in K_P$. Then $\lambda_i=\lambda_{i1}x_1+\cdots+\lambda_{in}x_n+\cdots+\lambda_{im}x_m$.
\begin{equation}\label{wedge}
\lambda_1\wedge\cdots\wedge\lambda_n=\sum_{\binom{1, ..., n}{i_1, ..., i_n}\in S_n}(\lambda_{1i_1}x_{i_1})\wedge\cdots\wedge(\lambda_{ni_n}x_{i_n})+f_1
\end{equation}
where $S_n$ is the symmetric group of rank $n$ and $f_1$ is the part of $\lambda_1\wedge\cdots\wedge\lambda_n$ which doesn't contain the monomial ${\bf x}_{a_0}$. Denote by $f_2$ the 1st part of the right side of ~\ref{wedge}. Then
\begin{eqnarray*}
  f_2 &=& \sum \lambda_{1i_1}\cdots\lambda_{ni_n}\cdot x_{i_1}\wedge\cdots\wedge x_{i_n} \\
      &=& \left(\sum \text{sign}\binom{1, ..., n}{i_1, ..., i_n}\lambda_{1i_1}\cdots\lambda_{ni_n}\right)x_1\wedge\cdots\wedge x_n \\
      &=& \det\Lambda_{a_0}\cdot {\bf x}_{a_0}.
\end{eqnarray*}
Similarly, we can know that the coefficient of other ${\bf x}_a$ also equals to $\det\Lambda_a$, which is just $\pm 1$. So we only need to choose a proper orientation $\mathfrak{o}$ of $K_P$ such that all coefficients of ${\bf x}_a$ corresponding to $a$ under the chosen orientation $\mathfrak{o}$ equal to $1$. In fact, if $\det\Lambda_a=-1$, then we will choose the inverse orientation of $a$. Furthermore, under the chosen orientation $\mathfrak{o}$, we have that $\sigma_n^{K_P, \mathfrak{o}}(x_1, ..., x_m)=\lambda_1\wedge\cdots\wedge\lambda_n$.

\vskip .1cm

Next, let us prove the converse. Assume that there is a proper orientation $\mathfrak{o}$ of $K_P$ such that $\sigma_n^{K_P, \mathfrak{o}}(x_1, ..., x_m)=\lambda_1\wedge\cdots\wedge\lambda_n$.
Let $\lambda_i=\lambda_{i1}x_1+\cdots+\lambda_{im}x_m$. From the above arguments, we  know that under the orientation $\mathfrak{o'}$,
$$\lambda_1\wedge\cdots\wedge\lambda_n=\sum \det\Lambda_{a^{\mathfrak{o'}}} {\bf x}_{a^{\mathfrak{o'}}}$$
where
$\Lambda_{a^{\mathfrak{o'}}}=
\begin{pmatrix}
\begin{smallmatrix}
\lambda_{1i_1}&\cdots & \lambda_{1i_n}\\
\vdots& & \vdots\\
\lambda_{ni_1}& \cdots &\lambda_{ni_n}
\end{smallmatrix}
\end{pmatrix}$
for $a^{\mathfrak{o'}}=\langle i_1, ..., i_n\rangle^{\mathfrak{o'}}\in K_P$ with $i_1<\cdots<i_n$. On the other hand,  under the orientation $\mathfrak{o}$, we have that
$$\sigma_n^{K_P, \mathfrak{o}}(x_1, ..., x_m)=\lambda_1\wedge\cdots\wedge\lambda_n=\sum {\bf x}_{a^{\mathfrak{o}}}.$$
Two different expressions of the product $\lambda_1\wedge\cdots\wedge\lambda_n$ under the two orientations $\mathfrak{o'}$ and $\mathfrak{o}$ give
$${\bf x}_{a^{\mathfrak{o}}}=\det\Lambda_{a^{\mathfrak{o'}}} {\bf x}_{a^{\mathfrak{o'}}}$$
where two oriented $(n-1)$-dimensional simplices $a^{\mathfrak{o}}$ and $a^{\mathfrak{o'}}$ have the same underlying set. This means that  $\det\Lambda_{a^{\mathfrak{o'}}}=\pm 1$ for every $(n-1)$-dimensional oriented simplex $a^{\mathfrak{o'}}$ under the  orientation $\mathfrak{o'}$ since ${\bf x}_{a^{\mathfrak{o}}}=\pm {\bf x}_{a^{\mathfrak{o'}}}$ in $\mathcal{E}_{\Bbb C}(K_P)$. Then $\Lambda=(\lambda_{ij})$ can just determine a characteristic function and therefore, $s_{\Bbb C}(P)=m-n$.
\end{proof}

\begin{remark} Let $V_P$ denote the vertex set of $P$. Then we know that there are $|V_P|$ simplices of dimension $n-1$ in $K_P$.
Thus the number of  possible orientations of all simplices of dimension $n-1$ in $K_P$ is $2^{|V_P|}$.
It should be pointed out that if $s_{\Bbb C}(K_P)=m-n$, as shown in the proof of Theorem~\ref{cpx}, we see that for any orientation $\mathfrak{o}$ of $K_P$,  $\sigma_n^{K_P, \mathfrak{o}}(x_1, ..., x_m)$ may not be decomposed as the product $\lambda_1\wedge\cdots\wedge\lambda_n$ of $n$ factors of degree 2 in $\mathcal{E}_{\Bbb C}(K_P)$. Actually, this can be related with the geometry structure of the quasitoric manifold corresponding to $\lambda=(\lambda_1, ..., \lambda_n)$ with the given omniorientation (see Proposition~\ref{al-com} below).
\end{remark}

\begin{corollary}\label{ch-pol1}
A sequence $\lambda=(\lambda_1, ..., \lambda_n)$ in $R_{\C}(K_P)^{(2)}$ is a characteristic function on $P^n$ if and only if there is an orientation $\mathfrak{o}$ of $K_P$ such that $\sigma_n^{K_P, \mathfrak{o}}(x_1, ..., x_m)=\lambda_1\wedge\cdots\wedge\lambda_n$  in $\mathcal{E}_{\Bbb C}(K_P)$.
\end{corollary}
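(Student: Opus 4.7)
The plan is to recast this corollary as essentially a restatement of what was actually shown inside the proof of Theorem~\ref{cpx}, isolating the key expansion of $\lambda_1\wedge\cdots\wedge\lambda_n$ in $\mathcal{E}_{\Bbb C}(K_P)$ and interpreting it in terms of the minor determinants of the matrix $\Lambda=(\lambda_{ij})$ associated to the sequence $\lambda=(\lambda_1,\dots,\lambda_n)$.

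First I would fix the canonical orientation $\mathfrak{o'}$ of $K_P$ coming from the ordering $1<\cdots<m$ of the vertex set, and write $\lambda_i=\lambda_{i1}x_1+\cdots+\lambda_{im}x_m$. Then the central computation, already carried out in the proof of Theorem~\ref{cpx}, is the identity
\begin{equation*}
\lambda_1\wedge\cdots\wedge\lambda_n=\sum_{a\in K_P,\,\dim a=n-1}\det\Lambda_{a^{\mathfrak{o'}}}\cdot {\bf x}_{a^{\mathfrak{o'}}}
\end{equation*}
in $\mathcal{E}_{\Bbb C}(K_P)$, where for $a=\langle i_1,\dots,i_n\rangle^{\mathfrak{o'}}$ with $i_1<\cdots<i_n$ the matrix $\Lambda_a$ is the $(n\times n)$ submatrix of $\Lambda$ with columns indexed by $i_1,\dots,i_n$. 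This identity is the bridge between the combinatorics of characteristic functions and the decomposition of the elementary symmetric polynomial in the exterior ring, and I would simply quote it.

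For the forward direction, assume $\lambda$ is a characteristic function on $P$. By definition, for every vertex $v=F_{i_1}\cap\cdots\cap F_{i_n}$ the vectors $\lambda(F_{i_1}),\dots,\lambda(F_{i_n})$ form a basis of $\Z^n$, so $\det\Lambda_a=\pm 1$ for every $(n-1)$-simplex $a\in K_P$. I then define a new orientation $\mathfrak{o}$ by keeping $a^{\mathfrak{o'}}$ whenever $\det\Lambda_{a^{\mathfrak{o'}}}=+1$ and reversing it whenever $\det\Lambda_{a^{\mathfrak{o'}}}=-1$; the relation ${\bf x}_{a^{\mathfrak{o}}}=\det\Lambda_{a^{\mathfrak{o'}}}\cdot{\bf x}_{a^{\mathfrak{o'}}}$ in $\mathcal{E}_{\Bbb C}(K_P)$ then turns the identity above into $\lambda_1\wedge\cdots\wedge\lambda_n=\sum_a {\bf x}_{a^{\mathfrak{o}}}=\sigma_n^{K_P,\mathfrak{o}}(x_1,\dots,x_m)$.

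For the converse, assume an orientation $\mathfrak{o}$ is given such that $\lambda_1\wedge\cdots\wedge\lambda_n=\sigma_n^{K_P,\mathfrak{o}}(x_1,\dots,x_m)$. Reading this equality in the basis $\{{\bf x}_{a^{\mathfrak{o'}}}\}$ of $\mathcal{E}_{\Bbb C}(K_P)^{(2n)}$ and using ${\bf x}_{a^{\mathfrak{o}}}=\pm{\bf x}_{a^{\mathfrak{o'}}}$ together with the expansion above forces $\det\Lambda_{a^{\mathfrak{o'}}}=\pm 1$ for every $(n-1)$-simplex $a$ of $K_P$. By definition of $K_P=\partial P^*$, the $(n-1)$-simplices of $K_P$ are precisely the vertices of $P$, so this says exactly that $\lambda$ restricted to the $n$ facets meeting at each vertex of $P$ yields a $\Z$-basis of $\Z^n$, i.e.\ $\lambda$ is a characteristic function. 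There is no real obstacle beyond bookkeeping signs, since the whole content has already been established inside the proof of Theorem~\ref{cpx}; the corollary just repackages the ``only if'' and ``if'' directions as a characterization of characteristic functions themselves.
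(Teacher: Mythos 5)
Your proposal is correct and follows exactly the route the paper intends: the corollary is a direct extraction of what is established inside both halves of the proof of Theorem~\ref{cpx}, namely the expansion $\lambda_1\wedge\cdots\wedge\lambda_n=\sum_a\det\Lambda_{a^{\mathfrak{o'}}}{\bf x}_{a^{\mathfrak{o'}}}$, the sign-adjusting choice of orientation in one direction, and coefficient comparison forcing $\det\Lambda_a=\pm1$ in the other. There is no substantive difference from the paper's argument.
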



\begin{example} \label{almost complex}
Let $P$ be a square with 4 facets $F_1, F_2, F_3, F_4$ satisfying that $F_1\cap F_3=\emptyset$ and $F_2\cap F_4=\emptyset$. Then $K_P$ is a 1-dimensional simplicial complex on $[4]=\{1, 2, 3, 4\}$. Given an orientation $\mathfrak{o}$ of $K_P$ such that 4 oriented 1-dimensional simplices are $\langle1,2\rangle, \langle1, 4\rangle, \langle3,2\rangle, \langle4,3\rangle$, respectively,  then
$$\sigma_2^{K_P, \mathfrak{o}}(x_1, x_2, x_3, x_4)=x_1\wedge x_2+x_1\wedge x_4+x_3\wedge x_2+x_4\wedge x_3.$$
Choose $\lambda_1=x_1+x_2+2x_3+3x_4$ and $\lambda_2=2x_1+3x_2+5x_3+7x_4$. Then we have that
\begin{eqnarray*}
\lambda_1\wedge\lambda_2 &= &(x_1+x_2+2x_3+3x_4)\wedge(2x_1+3x_2+5x_3+7x_4)\\
&=& 3x_1\wedge x_2+7x_1\wedge x_4+2x_2\wedge x_1+5x_2\wedge x_3+6x_3\wedge x_2+14x_3\wedge x_4\\
&& +15x_4\wedge x_3+6x_4\wedge x_1 \\
&=& 3x_1\wedge x_2+7x_1\wedge x_4-2x_1\wedge x_2+5x_2\wedge x_3-6x_2\wedge x_3+14x_3\wedge x_4\\
&& -15x_3\wedge x_4-6x_1\wedge x_4\\
&=& x_1\wedge x_2+x_1\wedge x_4+x_3\wedge x_2+x_4\wedge x_3
\end{eqnarray*}
so $\sigma_2^{K_P, \mathfrak{o}}(x_1, x_2, x_3, x_4)=\lambda_1\wedge\lambda_2$.  Thus $s_{\Bbb C}(P)=4-2=2$ and $\lambda=(\lambda_1, \lambda_2)$ is a characteristic function on $P$.
\end{example}

\subsection{Almost complex structures on quasitoric manifolds}

As a manifold with corners, $P$ has two possible orientations, each of which determines an orientation  of $K_P$.
If we take such an orientation $\mathfrak{o}_P$ of $P$, then the other one will be the inverse orientation $-\mathfrak{o}_P$ of $\mathfrak{o}_P$.
These two orientations just determine two possible orientations of the underlying space $|K_P|$ as a sphere, denoted by $\pm \mathfrak{o}_{|K_P|}$.

\vskip .1cm

Now take the orientation  $\mathfrak{o}_{|K_P|}$ of $K_P$ and assume that $s_{\Bbb C}(P)=m-n$. Let $\lambda=(\lambda_1, ..., \lambda_n)$ be a characteristic function on $P$ such that $\lambda_i=\lambda_{i1}x_1+\cdots+\lambda_{im}x_m$, and  let $M_{\Bbb C}(P,\lambda)$ be the corresponding quasitoric manifold over $P$.
For each oriented $(n-1)$-simplex $a=\langle i_1, ..., i_n\rangle$ of
$K_P$, by $\Lambda_a$ we denote the sequare matrix formed by $n$ columns $\lambda(F_{i_1}), ..., \lambda(F_{i_n})$ in the order given by the orientation of $a$, where $\lambda(F_{i_j})=(\lambda_{1i_j}, ..., \lambda_{ni_j})^\top$. Note that each oriented simplex $a$ of dimension $n-1$ in $K_P$ corresponds to a fixed point of  $M_{\Bbb C}(P,\lambda)$, denoted by  $v_a$.
Then we have that
$$\lambda_1\wedge\cdots\wedge\lambda_n=\sum_{a\in K_P \atop
\dim a=n-1}\det \Lambda_a{\bf x}_a.$$
Without the loss of generality, assume that $\langle 1, ..., n\rangle$ is an oriented $(n-1)$-simplex of $K_P$ and $\lambda_{\langle 1, ..., n\rangle}$ is the  identity $(n\times n)$-matrix. By \cite[Definition 5.3]{bpr}, $(P, \lambda)$
is a combinatorial quasitoric pair, and it determines an omniorientation of $M_{\Bbb C}(P,\lambda)$.
However,  in general, $\lambda_1\wedge\cdots\wedge\lambda_n=\sum_{a\in K_P \atop
\dim a=n-1}\det \Lambda_a{\bf x}_a$ may not be equal to $\sigma_n^{K_P, \mathfrak{o}_{|K_P|}}(x_1, ..., x_m)$. Indeed, we easily see that the coefficient $\det \Lambda_a$ is exactly the sign $\sigma(v_a)$ at the fixed point $v_a$
(see~\cite[Definition B.6.2]{bp1} for the definition of $\sigma(v_a)$), which may be equal to 1 or $-1$.
\cite[Theorem 7.3.24]{bp1} tells us that $M_{\Bbb C}(P,\lambda)$ with the given omniorientation admits a $\mathbb{T}_{\Bbb C}^n$-invariant almost complex structure
if and only if $\sigma(v)=1$ for all fixed points of $M_{\Bbb C}(P,\lambda)$. Thus by Theorem~\ref{cpx} we have

\begin{proposition}\label{al-com}
$\sigma_n^{K_P, \mathfrak{o}_{|K_P|}}(x_1, ..., x_m)$ can be decomposed as $\lambda_1\wedge\cdots\wedge\lambda_n$ in  $\mathcal{E}_{\Bbb C}(K_P)$ if and only if
$s_{\Bbb C}(P)=m-n$ and the corresponding quasitoric manifold $M_{\Bbb C}(P,\lambda)$ admits a $\mathbb{T}_{\Bbb C}^n$-invariant almost complex structure.
 \end{proposition}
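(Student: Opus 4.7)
The plan is to reduce the statement to a combination of Theorem~\ref{cpx} and Buchstaber--Panov--Ray's criterion \cite[Theorem 7.3.24]{bp1}, which asserts that the quasitoric manifold $M_{\Bbb C}(P,\lambda)$ with its given omniorientation admits a $\mathbb{T}^n_{\Bbb C}$-invariant almost complex structure precisely when the sign $\sigma(v)$ at every fixed point equals $+1$. The key bridge is the observation made just before the proposition: when we expand $\lambda_1\wedge\cdots\wedge\lambda_n$ with respect to the basis of oriented monomials $\{{\bf x}_a\}$ associated with the \emph{particular} orientation $\mathfrak{o}_{|K_P|}$ induced from the polytope $P$, one has
\[
\lambda_1\wedge\cdots\wedge\lambda_n=\sum_{a\in K_P,\,\dim a=n-1}\det\Lambda_a\cdot{\bf x}_a,
\]
and the coefficient $\det\Lambda_a$ coincides with the sign $\sigma(v_a)\in\{\pm 1\}$ at the fixed point $v_a$ of $M_{\Bbb C}(P,\lambda)$ corresponding to $a$.

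For the forward implication, I would assume that $\sigma_n^{K_P,\mathfrak{o}_{|K_P|}}(x_1,\dots,x_m)=\lambda_1\wedge\cdots\wedge\lambda_n$ in $\mathcal{E}_{\Bbb C}(K_P)$. Applying Theorem~\ref{cpx} directly yields $s_{\Bbb C}(P)=m-n$, and moreover the matrix $\Lambda=(\lambda_{ij})$ obtained from $\lambda_i=\sum_j\lambda_{ij}x_j$ is a characteristic function on $P$. Comparing the above expansion of $\lambda_1\wedge\cdots\wedge\lambda_n$ with $\sigma_n^{K_P,\mathfrak{o}_{|K_P|}}(x_1,\dots,x_m)=\sum_a{\bf x}_a$ in the free $\Z$-module basis $\{{\bf x}_a\}$, one obtains $\det\Lambda_a=+1$ for every $(n-1)$-simplex $a$. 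Since $\det\Lambda_a=\sigma(v_a)$, every fixed point has sign $+1$, and \cite[Theorem 7.3.24]{bp1} then provides the required $\mathbb{T}^n_{\Bbb C}$-invariant almost complex structure on $M_{\Bbb C}(P,\lambda)$.

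For the converse, assume $s_{\Bbb C}(P)=m-n$ with characteristic function $\lambda=(\lambda_1,\dots,\lambda_n)$ such that $M_{\Bbb C}(P,\lambda)$ admits a $\mathbb{T}^n_{\Bbb C}$-invariant almost complex structure compatible with its omniorientation. By \cite[Theorem 7.3.24]{bp1}, $\sigma(v)=+1$ at every fixed point, hence $\det\Lambda_a=+1$ for every oriented $(n-1)$-simplex $a\in K_P$ with the orientation $\mathfrak{o}_{|K_P|}$. Substituting back into the expansion gives
\[
\lambda_1\wedge\cdots\wedge\lambda_n=\sum_a\det\Lambda_a\cdot{\bf x}_a=\sum_a{\bf x}_a=\sigma_n^{K_P,\mathfrak{o}_{|K_P|}}(x_1,\dots,x_m),
\]
which is the desired decomposition.

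The main point requiring care is the identification $\det\Lambda_a=\sigma(v_a)$ under the specific orientation $\mathfrak{o}_{|K_P|}$. This reduces to checking that the convention fixing $\mathfrak{o}_{|K_P|}$ from an orientation $\mathfrak{o}_P$ of $P$ agrees, vertex by vertex, with the convention used in \cite[Definition B.6.2]{bp1} for defining $\sigma(v_a)$ from the stably complex structure coming from the omniorientation, so that both signs refer to the \emph{same} ordering of $\lambda(F_{i_1}),\dots,\lambda(F_{i_n})$. Once that bookkeeping is verified, both directions follow immediately from Theorem~\ref{cpx} together with the cited criterion, with essentially no further computation.
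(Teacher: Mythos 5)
Your proposal is correct and follows essentially the same route as the paper: the paper likewise proves the proposition by expanding $\lambda_1\wedge\cdots\wedge\lambda_n$ as $\sum_a\det\Lambda_a\,{\bf x}_a$ in the basis of oriented monomials for $\mathfrak{o}_{|K_P|}$, identifying $\det\Lambda_a$ with the sign $\sigma(v_a)$ at the corresponding fixed point, and then combining Theorem~\ref{cpx} with the Buchstaber--Panov--Ray criterion \cite[Theorem 7.3.24]{bp1}. The one bookkeeping point you flag (that $\det\Lambda_a=\sigma(v_a)$ under the orientation convention for $\mathfrak{o}_{|K_P|}$) is exactly the step the paper also leans on, stated just before the proposition.
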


 Fix the orientation  $\mathfrak{o}_{|K_P|}$ of $K_P$,  we easily see that $\sigma_n^{K_P, \mathfrak{o}_{|K_P|}}(x_1, ..., x_m)$ is a cycle in the chain complex $(\mathcal{E}_{\Bbb C}(K_P), \partial)$, i.e., $\partial \sigma_n^{K_P, \mathfrak{o}_{|K_P|}}(x_1, ..., x_m)=0$. An easy argument shows that if $\lambda_1\wedge\cdots\wedge\lambda_n$ is a cycle in  $(\mathcal{E}_{\Bbb C}(K_P), \partial)$, then  $\sigma_n^{K_P, \mathfrak{o}_{|K_P|}}(x_1, ..., x_m)
 =\pm \lambda_1\wedge\cdots\wedge\lambda_n$ since  $(\mathcal{E}_{\Bbb C}(K_P), \partial)\cong \mathcal{C}(K_P)$ as chain complexes by Lemma~\ref{orient} and $K_P$ is a simplicial sphere. Therefore, it follows that

 \begin{corollary}\label{cycle}
Let $\lambda=(\lambda_1, ..., \lambda_n)$ be a characteristic function on $P$ and let $M_{\Bbb C}(P,\lambda)$ be the corresponding quasitoric manifold over $P$.
 Then $M_{\Bbb C}(P,\lambda)$ admits a $\mathbb{T}_{\Bbb C}^n$-invariant almost complex structure if and only if
 $\lambda_1\wedge\cdots\wedge\lambda_n$ is a cycle in  $\mathcal{E}_{\Bbb C}(K_P)$.
 \end{corollary}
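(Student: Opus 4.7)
The plan is to reduce Corollary~\ref{cycle} to Proposition~\ref{al-com} by using Lemma~\ref{orient} to convert the chain-level ``cycle'' hypothesis into an assertion about the characteristic matrix. First I would recall the identity that falls out of the computation in the proof of Theorem~\ref{cpx}: for any characteristic function $\lambda=(\lambda_1,\ldots,\lambda_n)$ on $P$,
\[
\lambda_1\wedge\cdots\wedge\lambda_n \;=\; \sum_{a}(\det\Lambda_a)\,{\bf x}_a
\]
where $a$ ranges over the $(n-1)$-simplices of $K_P$ oriented by $\mathfrak{o}_{|K_P|}$ and each $\det\Lambda_a\in\{\pm 1\}$ because $\lambda$ is a characteristic function.

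For the forward direction I would appeal to Proposition~\ref{al-com} directly: an invariant almost complex structure on $M_{\Bbb C}(P,\lambda)$ is equivalent (given $s_{\Bbb C}(P)=m-n$) to $\sigma_n^{K_P,\mathfrak{o}_{|K_P|}}(x_1,\ldots,x_m)=\lambda_1\wedge\cdots\wedge\lambda_n$; the left-hand side corresponds under Lemma~\ref{orient} to the fundamental cycle of the simplicial sphere $K_P$, so the right-hand side is a cycle in $(\mathcal{E}_{\Bbb C}(K_P),\partial)$.

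For the reverse direction, assuming $\partial(\lambda_1\wedge\cdots\wedge\lambda_n)=0$, I would transport this element along the isomorphism $(\mathcal{E}_{\Bbb C}(K_P),\partial)\cong\mathcal{C}(K_P)$ of Lemma~\ref{orient}. Because $K_P$ has no simplices of dimension $\geq n$, one has $Z_{n-1}(K_P)=H_{n-1}(K_P)\cong\Z$, generated by the fundamental cycle, which with the orientation $\mathfrak{o}_{|K_P|}$ is precisely $\sigma_n^{K_P,\mathfrak{o}_{|K_P|}}(x_1,\ldots,x_m)$. Hence $\lambda_1\wedge\cdots\wedge\lambda_n=k\cdot\sigma_n^{K_P,\mathfrak{o}_{|K_P|}}$ for some $k\in\Z$; matching coefficients against the displayed identity above forces $k=\det\Lambda_a\in\{\pm 1\}$, and in particular every $\det\Lambda_a$ takes the common value $k$. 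If $k=+1$, then Proposition~\ref{al-com} immediately yields the invariant almost complex structure. If $k=-1$, I would pass to the reversed orientation $-\mathfrak{o}_{|K_P|}$, under which every ${\bf x}_a$ changes sign; this produces $\sigma_n^{K_P,-\mathfrak{o}_{|K_P|}}=\lambda_1\wedge\cdots\wedge\lambda_n$, and Proposition~\ref{al-com} applies again.

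The main obstacle I anticipate is the sign ambiguity in the reverse direction. I need to justify that the $k=-1$ case genuinely reduces to the $k=+1$ case when invoking Proposition~\ref{al-com}; concretely, this amounts to the observation that the existence of a $\mathbb{T}_{\Bbb C}^n$-invariant almost complex structure on $M_{\Bbb C}(P,\lambda)$ does not depend on which of the two orientations of $P$ (equivalently of $|K_P|$) we started with, so that either sign in $\sigma_n^{K_P,\mathfrak{o}_{|K_P|}}=\pm\lambda_1\wedge\cdots\wedge\lambda_n$ is enough to trigger the conclusion of Proposition~\ref{al-com}. Everything else is a straightforward comparison of coefficients in $\mathcal{E}_{\Bbb C}(K_P)$ together with the sphere computation of $Z_{n-1}(K_P)$.
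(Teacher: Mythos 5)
Your proposal is correct and follows essentially the same route as the paper: the paper's proof is exactly the two-sentence argument preceding Corollary~\ref{cycle}, namely that $\sigma_n^{K_P,\mathfrak{o}_{|K_P|}}$ is the fundamental cycle of the simplicial $(n-1)$-sphere $K_P$ under Lemma~\ref{orient}, so any cycle in degree $n-1$ of $\mathcal{E}_{\Bbb C}(K_P)$ is $\pm\sigma_n^{K_P,\mathfrak{o}_{|K_P|}}$, and then Proposition~\ref{al-com} finishes it. You make the sign analysis explicit (pinning down $k\in\{\pm 1\}$ by comparing against $\sum_a(\det\Lambda_a)\,{\bf x}_a$ and disposing of $k=-1$ by swapping to $-\mathfrak{o}_{|K_P|}$), which the paper compresses into the single ``$=\pm$'' and an unstated appeal to the orientation-independence of the conclusion.
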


 \begin{example}
In Example~\ref{almost complex}, we give an orientation of $P$ such that all 1-dimensional oriented simplices of $K_P$ are $\langle 1, 2\rangle, \langle 2, 3\rangle, \langle 3, 4\rangle, \langle 4, 1\rangle$, respectively. We still choose
$\lambda=(\lambda_1, \lambda_2)$ such that  $\lambda_1=x_1+x_2+2x_3+3x_4$ and $\lambda_2=2x_1+3x_2+5x_3+7x_4$, as stated in Example~\ref{almost complex}. Then we know that $$\lambda_1\wedge\lambda_2 = x_1\wedge x_2-x_2\wedge x_3-x_3\wedge x_4-x_4\wedge x_1,$$ but obviously
 $\lambda_1\wedge\lambda_2$ is not a cycle in $\mathcal{E}_{\Bbb C}(K_P)$. Thus the corresponding 4-dimensional quasitoric manifold $M_{\Bbb C}(P,\lambda)$ does not admit a $\mathbb{T}_{\Bbb C}^4$-invariant almost complex structure.

 \vskip .1cm

 However, if we choose $\lambda'=(\lambda_1', \lambda_2')$ such that $\lambda_1'=x_1-x_3$ and $\lambda_2'=x_2-x_4$, then
 $$\lambda_1'\wedge\lambda_2'=x_1\wedge x_2+x_2\wedge x_3+x_3\wedge x_4+x_4\wedge x_1$$
 so $\lambda_1'\wedge\lambda_2'$ is a cycle in $\mathcal{E}_{\Bbb C}(K_P)$. Thus the corresponding 4-dimensional quasitoric manifold $M_{\Bbb C}(P,\lambda')$  admits a $\mathbb{T}_{\Bbb C}^4$-invariant almost complex structure.
\end{example}

 \subsection{A criterion for Buchstaber invariant $s_{\Bbb R}(P)=m-n$}
 In this case, we know from Subsection~\ref{exterior} and Theorem~\ref{face ring} that there is a natural surjective projection
 $$H^*_{T^m_{\Bbb R}}(\mathcal{Z}_P^{\Bbb R}; {\Bbb Z}_2)=R_{\R}(K_P)\longrightarrow \mathcal{E}_{\Bbb R}(K_P)=H^*_{T^m_{\Bbb R}}(\mathcal{Z}_P^{\Bbb R}; {\Bbb Z}_2)/\langle x_i^2|i=1, ..., m\rangle$$
where $\mathcal{E}_{\Bbb R}(K_P)=\wedge_{{\Bbb Z}_2}[x_1, ..., x_m]/\mathcal{I}_{K_P}$ with $\deg x_i=1$.

\vskip .1cm

There is a real analogue of Theorem~\ref{cpx}. Since we do not need to consider the orientation in the real case, its proof is much easier and omitted.

\begin{theorem}\label{real}
 $s_{\Bbb R}(P)=m-n$ if and only if
$w_n^{\mathbb{T}^m_{\Bbb R}}(\mathcal{T}\mathcal{Z}_P^{\Bbb R})=\sigma_n^{K_P}(x_1, ..., x_m)$ can be decomposed as the product
$\lambda_1\cdots\lambda_n$ of $n$ factors of degree 1  in $\mathcal{E}_{\Bbb R}(K_P)=H^*_{T^m_{\Bbb R}}(\mathcal{Z}_P^{\Bbb R}; {\Bbb Z}_2)/\langle x_i^2|i=1, ..., m\rangle$.
\end{theorem}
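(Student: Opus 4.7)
The plan is to mimic the proof of Theorem~\ref{cpx}, exploiting the crucial simplification that over $\mathbb{Z}_2$ all signs collapse and the orientation ambiguity disappears. Since $\mathcal{E}_{\Bbb R}(K_P) = \wedge_{{\Bbb Z}_2}[x_1,\ldots,x_m]/\mathcal{I}_{K_P}$, each squarefree monomial $x_{i_1}\wedge\cdots\wedge x_{i_n}$ associated with a face $\{i_1,\ldots,i_n\}\in K_P$ is well-defined independent of any ordering, and $\sigma_n^{K_P}(x_1,\ldots,x_m)$ is simply the sum of these monomials over all $(n-1)$-faces of $K_P$. Monomials corresponding to non-faces vanish by definition of $\mathcal{I}_{K_P}$.

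For the forward direction, assume $s_{\Bbb R}(P)=m-n$. Then there is a characteristic function $\lambda:\mathcal{F}(P)\to \mathbb{Z}_2^n$, producing an $(n\times m)$ matrix $\Lambda=(\lambda_{ij})$ over $\mathbb{Z}_2$. Setting $\lambda_i=\lambda_{i1}x_1+\cdots+\lambda_{im}x_m\in\mathcal{E}_{\Bbb R}(K_P)^{(1)}$ and expanding, I would compute
\[
\lambda_1\wedge\cdots\wedge\lambda_n = \sum_{a=\{i_1<\cdots<i_n\}\in K_P}(\det\Lambda_a)\, x_{i_1}\wedge\cdots\wedge x_{i_n},
\]
where $\Lambda_a$ is the submatrix of columns indexed by $a$; all other terms are killed by $\mathcal{I}_{K_P}$. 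The characteristic function condition forces $\det\Lambda_a=\pm 1$ over $\mathbb{Z}$ for every $(n-1)$-face $a\in K_P$, hence $\det\Lambda_a=1$ in $\mathbb{Z}_2$. So the right-hand side is exactly $\sigma_n^{K_P}(x_1,\ldots,x_m)$, giving the required decomposition.

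For the converse, suppose $\sigma_n^{K_P}(x_1,\ldots,x_m)=\lambda_1\wedge\cdots\wedge\lambda_n$ with $\deg\lambda_i=1$. Writing each $\lambda_i=\sum_j\lambda_{ij}x_j$ and expanding the wedge product in the same way, the coefficient of $x_{i_1}\wedge\cdots\wedge x_{i_n}$ for every $(n-1)$-face $a\in K_P$ must equal $\det\Lambda_a\bmod 2$, and this coefficient equals $1$ by hypothesis. Thus $\det\Lambda_a=1$ in $\mathbb{Z}_2$ whenever $a$ is an $(n-1)$-face; this is precisely the defining condition for $\Lambda=(\lambda_{ij})$ to represent a characteristic function $\lambda:\mathcal{F}(P)\to\mathbb{Z}_2^n$. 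By the discussion in Subsection~\ref{char} together with the equivalence $s_{\Bbb R}(P)=m-n\Leftrightarrow$ existence of a small cover, one concludes $s_{\Bbb R}(P)=m-n$.

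I do not anticipate any genuine obstacle: the proof is a direct streamlining of the complex case, where the only delicate point---choosing an orientation $\mathfrak{o}$ so that all signs $\det\Lambda_a=\pm 1$ align to $+1$---evaporates automatically over $\mathbb{Z}_2$. The only step worth double-checking is that the expansion of the wedge product indeed restricts to a sum over faces of $K_P$, which follows because any term involving a monomial whose index set is not in $K_P$ lies in $\mathcal{I}_{K_P}$ and hence is zero in $\mathcal{E}_{\Bbb R}(K_P)$.
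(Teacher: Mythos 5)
Your proof is correct and is exactly the streamlining of the complex case (Theorem~\ref{cpx}) that the paper has in mind when it says the real proof is ``much easier and omitted'': over $\mathbb{Z}_2$ the sign-bookkeeping disappears, the permanent and determinant coincide mod~2, so the coefficient of $x_a$ in $\lambda_1\wedge\cdots\wedge\lambda_n$ is $\det\Lambda_a\in\mathbb{Z}_2$, which equals $1$ for every $(n-1)$-face of $K_P$ precisely when $\Lambda$ is a characteristic matrix. The only small slip is saying ``$\det\Lambda_a=\pm 1$ over $\mathbb{Z}$'' in the forward direction: the real characteristic function already takes values in $\mathbb{Z}_2^n$, so the nonsingularity condition is directly $\det\Lambda_a=1$ in $\mathbb{Z}_2$ with no integer lift involved.
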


\begin{corollary}\label{ch-pol2}
The set $$\{\lambda=(\lambda_1, ..., \lambda_n)\text{ in }H^1_{\mathbb{T}^m_{\Bbb R}}(\mathcal{Z}_P^{\Bbb R}; {\Bbb Z}_2)\big|\sigma_n^{K_P}(x_1, ..., x_m)=\lambda_1\cdots\lambda_n \text{ in } \mathcal{E}_{\Bbb R}(K_P)\}$$
gives all characteristic functions on $P^n$.
\end{corollary}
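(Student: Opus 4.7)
The plan is to establish Corollary~\ref{ch-pol2} as a sharpening of Theorem~\ref{real}: rather than merely asserting the existence of a factorization when $s_{\Bbb R}(P)=m-n$, we identify the precise correspondence between factorizations of $\sigma_n^{K_P}(x_1,\dots,x_m)$ as a product of $n$ linear factors in $\mathcal{E}_{\Bbb R}(K_P)$ and characteristic functions on $P$. The proof proceeds by direct expansion of the product $\lambda_1\cdots\lambda_n$ in $\mathcal{E}_{\Bbb R}(K_P)$ and reading off the coefficients in the canonical monomial basis.

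Concretely, I would begin by noting that any degree-$1$ element of $\mathcal{E}_{\Bbb R}(K_P)=\wedge_{{\Bbb Z}_2}[x_1,\dots,x_m]/\mathcal{I}_{K_P}$ may be written uniquely as $\lambda_i=\lambda_{i1}x_1+\cdots+\lambda_{im}x_m$ with $\lambda_{ij}\in{\Bbb Z}_2$, assembling into an $(n\times m)$-matrix $\Lambda=(\lambda_{ij})$. Over ${\Bbb Z}_2$ the exterior product is commutative, and the relations $x_i^2=0$ together with the Stanley--Reisner ideal $\mathcal{I}_{K_P}$ force only those monomials $x_{i_1}\cdots x_{i_n}$ with $\{i_1,\dots,i_n\}\in K_P$ an $(n-1)$-simplex to survive in the expansion of $\lambda_1\cdots\lambda_n$. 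For such a simplex $a=\{i_1,\dots,i_n\}$, a routine computation shows the coefficient equals the permanent of the submatrix $\Lambda_a$ formed by the columns indexed by $a$, which coincides with $\det\Lambda_a \pmod 2$. Hence
\[
\lambda_1\cdots\lambda_n \;=\;\sum_{a\in K_P,\;\dim a=n-1}(\det\Lambda_a\bmod 2)\,{\bf x}_a\quad\text{in }\mathcal{E}_{\Bbb R}(K_P).
\]

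Comparing this with $\sigma_n^{K_P}(x_1,\dots,x_m)=\sum_{a} {\bf x}_a$, we see that the identity $\sigma_n^{K_P}(x_1,\dots,x_m)=\lambda_1\cdots\lambda_n$ in $\mathcal{E}_{\Bbb R}(K_P)$ is equivalent to $\det\Lambda_a\equiv 1\pmod 2$ for every $(n-1)$-simplex $a\in K_P$. Because each $(n-1)$-simplex of $K_P$ corresponds to a vertex $v=F_{i_1}\cap\cdots\cap F_{i_n}$ of $P$, this condition is precisely the defining nonsingularity requirement for a characteristic function $\lambda:\mathcal{F}(P)\to{\Bbb Z}_2^n$ as recalled in Subsection~\ref{char}. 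Thus both implications follow simultaneously: a characteristic function yields such a factorization, and any such factorization produces a characteristic function via the matrix $\Lambda$.

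There is no substantive obstacle here beyond organizing the bookkeeping; the simplification relative to the complex case handled in Theorem~\ref{cpx} and Corollary~\ref{ch-pol1} is that over ${\Bbb Z}_2$ signs collapse, so no auxiliary orientation $\mathfrak{o}$ of $K_P$ has to be chosen to align the signs of $\det\Lambda_a$. The only delicate point to articulate cleanly is the reduction from the exterior product expansion to the permanent/determinant identification of coefficients, which hinges on the fact that in $\mathcal{E}_{\Bbb R}(K_P)$ only square-free monomials supported on faces of $K_P$ survive.
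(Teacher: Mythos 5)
Your proof is correct and takes essentially the same route the paper intends: it is the $\Z_2$ analogue of the coefficient computation in the proof of Theorem~\ref{cpx}, where commutativity of $\wedge$ over $\Z_2$ eliminates the need to choose an orientation $\mathfrak{o}$, and the permanent-equals-determinant identification mod $2$ pins down the coefficient of each $\mathbf{x}_a$ as $\det\Lambda_a$. The paper omits this computation, remarking only that it is ``much easier'' than the complex case; your writeup supplies precisely those details.
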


\subsection{An application to cyclic polytopes}

The moment curve in $\R^n$ is given by $v(t)=(t, t^2, ..., t^n)$. For any $m>n$,  define the cyclic polytope $C^n(m)$ as the convex hull of $m$ distinct points $v(t_i)$, $t_1<t_2<\cdots<t_m$ on the moment curve in $\R^n$. $C^n(m)$ is a simplicial polytope, i.e. its boundary is a simplicial $(n-1)$-sphere, still denoted by $C^n(m)$.

\vskip .1cm
In the case of $n\geq 4$, it has been already shown in \cite{dj} that when $m\ge 2^n$, $s_{\Bbb F}(C^n(m))<m-n$, since $C^n(m)$ is 2-neighbourly. But actually the precise lower bound for $m$ is much smaller.

\vskip .1cm
First let us look at the case $n=4$.

\begin{lemma}\label{c4}
When $m\ge 8$, $s_{\Bbb R}(C^4(m))<m-4$. In particular, when $m=8$, $s_{\Bbb R}(C^4(m))=m-5$, and when $m=7$, $s_{\Bbb R}(C^4(m))=m-4$.
  \end{lemma}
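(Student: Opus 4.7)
\noindent \emph{Proof proposal.} The overall plan is to invoke Theorem~\ref{real} (equivalently, Corollary~\ref{ch-pol2}): $s_\R(C^4(m))=m-4$ if and only if the simple polytope dual to $C^4(m)$ admits a characteristic function, i.e., an assignment $\lambda:[m]\to\Z_2^4$ of nonzero vectors to the vertices of $C^4(m)$ such that on every facet the four values form a $\Z_2$-basis. Since $C^4(m)$ is $2$-neighborly (every pair of vertices spans an edge), the values $\lambda(i)$ must moreover be pairwise distinct nonzero elements of $\Z_2^4$. Throughout, facets of $C^4(m)$ will be enumerated via Gale's evenness condition.

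For the case $m=7$, I would construct $\lambda$ explicitly and verify it facet by facet. Normalizing $\lambda(1),\ldots,\lambda(4)=e_1,\ldots,e_4$ via the facet $\{1,2,3,4\}$, one then chooses $\lambda(5),\lambda(6),\lambda(7)\in\Z_2^4$ and tests the remaining facets of $C^4(7)$ (listed directly from Gale's condition, $14$ in total). A valid assignment can be found by a short case search; combined with the trivial bound $s_\R\le m-n$, this yields $s_\R(C^4(7))=3$.

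For $m=8$, the lower bound $s_\R(C^4(8))\ge 3$ is again an explicit construction: exhibit a $3\times 8$ matrix over $\Z_2$ whose restriction to each facet's four columns has rank $3$, i.e., an analogue of a characteristic function of rank $3$. The upper bound $s_\R(C^4(8))<4$ is the main content. Assume toward a contradiction that $\lambda:[8]\to\Z_2^4$ is a characteristic function and, after a change of basis, set $\lambda(i)=e_i$ for $i=1,\ldots,4$. The overlapping chain of facets $\{1,2,i,i+1\}$, $\{2,3,i,i+1\}$, $\{1,i,i+1,8\}$ and $\{i,i+1,7,8\}$ produced by Gale's evenness yields a cascading system of basis constraints on $\lambda(5),\ldots,\lambda(8)$. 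A careful propagation of these constraints forces $\lambda(i)=\lambda(j)$ for some $i\ne j$, contradicting $2$-neighborliness. I expect this combinatorial case analysis to be the main technical obstacle of the lemma.

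Finally, the case $m\ge 9$ reduces to $m=8$ by restriction. The plan is to verify that every facet of $C^4(8)$ on $\{1,\ldots,8\}$ remains a facet of $C^4(m)$: Gale's evenness on $[8]$ requires even parity of $|S\cap(a,b)|$ for consecutive complement elements $a<b$ in $[8]$, and passing to $C^4(m)$ only adds conditions corresponding to gaps involving the extra complement elements $9,10,\ldots,m$, which are themselves consecutive in $[m]\setminus S$ and hence contribute empty (even) intersections with $S$. Consequently, the restriction of any characteristic function on $C^4(m)$ to the first eight vertices would be a characteristic function on $C^4(8)$, contradicting the previous case and giving $s_\R(C^4(m))<m-4$ for all $m\ge 8$.
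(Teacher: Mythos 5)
Your plan to reduce $m\ge 9$ to $m=8$ by restriction is incorrect, and this is the central gap. You assert that every facet of $C^4(8)$ on $\{1,\ldots,8\}$ remains a facet of $C^4(m)$, arguing that the new complement elements $9,\ldots,m$ produce only empty intersections with $S$. This overlooks the Gale pair formed by $9$ and the \emph{largest original} complement element. Concretely, $S=\{1,3,4,8\}$ is a facet of $C^4(8)$ (it is of the form $i_1=1,\ i_4=m,\ i_3=i_2+1$ from Shephard's description), but in $C^4(9)$ the complement is $\{2,5,6,7,9\}$, and $|S\cap(7,9)|=|\{8\}|=1$ is odd, so $\{1,3,4,8\}$ is \emph{not} a facet of $C^4(9)$. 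More generally, the ``wrap-around'' facets $\{1,j,j+1,m\}$ depend on $m$ and do not persist when $m$ grows. So the restriction of a characteristic function on $C^4(m)$ to $[8]$ need not be a characteristic function on $C^4(8)$, and the contradiction you want does not transfer. A further weakness: your $m=8$ upper bound is only sketched (``cascading constraints \ldots I expect this to be the main technical obstacle''), so even the base case is not actually established.

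The paper's argument is both cleaner and uniform in $m$, requiring no reduction. From the facet description of $C^4(m)$ (for every $m$), and the fact that the coefficient of $x_S$ in $\lambda_1\cdots\lambda_4$ must equal $1$ for every facet $S$, one derives three constraints on the column vectors $\lambda(i)\in\Z_2^4$: (i) they are pairwise distinct; (ii) for cyclically adjacent $i,j$ (including the pair $\{1,m\}$, using the facets $\{1,k,k+1,m\}$), $\lambda(i)+\lambda(j)$ equals no $\lambda(k)$; and (iii) the $m$ cyclic edge-sums $\lambda(i)+\lambda(i+1\bmod m)$ are themselves pairwise distinct. Hence the $m$ vertex vectors and the $m$ edge-sums are $2m$ distinct nonzero elements of $\Z_2^4$, forcing $2m\le 15$, i.e.\ $m\le 7$. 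This immediately gives $s_\R(C^4(m))<m-4$ for all $m\ge 8$; the sharp values at $m=7,8$ are then supplied by explicit matrices, as you also propose. Note that the cyclic (wrap-around) pair $\{1,m\}$ is essential: without it one only gets $2m-1\le 15$, i.e.\ $m\le 8$, which fails to exclude $m=8$.

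One smaller point: for the lower bound $s_\R(C^4(8))\ge 3$ you ask for a $3\times 8$ matrix with rank-$3$ restriction to each facet's columns, but four columns in $\Z_2^3$ can never be independent. The correct formulation, as in the paper, is a $5\times 8$ matrix (a map $[8]\to\Z_2^5$) whose four columns over each facet are linearly independent; equivalently, a rank-$3$ inclusion $\Z_2^3\hookrightarrow\Z_2^8$ meeting each coordinate subspace $\Z_2^\sigma$, $\sigma\in K_P$, trivially.
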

 \begin{proof} For a convenience, write $v_i=v(i)$ where $i=1, ..., m$.  According to \cite{s}, if $a=\{v_{i_1}, v_{i_2}, v_{i_3}, v_{i_4}\}\in C^4(m)$ where $i_1<\cdots<i_4$, then there are exactly 2 possibilities:
\begin{enumerate}
  \item $i_1=1$, $i_4=m$, $i_3-i_2=1$.
  \item $i_2-i_1=i_4-i_3=1$.
\end{enumerate}
Suppose there exists a characteristic function $\lambda: \{v_i\}_{1\le i\le m}\to \Z_2^4$. Then by Theorem~\ref{real},
$$\sigma_4^{C^4(m)}(x_1, ..., x_m)=\lambda_1\cdots\lambda_4,$$
where $\lambda_i=\lambda_{i1}x_1+\cdots+\lambda_{im}x_m$ are elements of degree 1 in $\mathcal{E}_{\Bbb R}(C^4(m))$.
\vskip .1cm

We claim:
\begin{enumerate}
  \item $(\lambda_{1i}, \cdots, \lambda_{4i})\ne(\lambda_{1j}, \cdots, \lambda_{4j})$, if $i\ne j$.
  \item $(\lambda_{1i}, \cdots, \lambda_{4i})+(\lambda_{1j}, \cdots, \lambda_{4j})\ne(\lambda_{1k}, \cdots, \lambda_{4k})$, if $i-j\equiv 1$ or $m-1\pmod{m}$, and $k\ne i,j$.
  \item $(\lambda_{1i}, \cdots, \lambda_{4i})+(\lambda_{1j}, \cdots, \lambda_{4j})\ne(\lambda_{1k}, \cdots, \lambda_{4k})+(\lambda_{1l}, \cdots, \lambda_{4l})$, if $i-j\equiv k-l\equiv1$ or $m-1 \pmod{m}$, and $i, j, k, l$ are distinct.
\end{enumerate}

If Claim (1) is not true, without loss of generality, assume $(\lambda_{11}, \cdots, \lambda_{41})=(\lambda_{12}, \cdots, \lambda_{42})$, then for each $i$,
$$\lambda_i=\lambda_{i1}(x_1+x_2)+\lambda_{i3}x_3+\cdots+\lambda_{im}x_m.$$
Since $(x_1+x_2)^2=0$ in $\mathcal{E}_{\Bbb R}(C^4(m))$, the monomials in $\sigma_4^{C^4(m)}(x_1, ..., x_m)$ containing $x_1x_2$ must vanish. This is contradictory.
If Claim (2) is not true, also assume $(\lambda_{11}, \cdots, \lambda_{41})+(\lambda_{12}, \cdots, \lambda_{42})=(\lambda_{1k}, \cdots, \lambda_{4k})$, $k\ne 1, 2$, then
$$\lambda_i=\lambda_{i1}(x_1+x_k)+\lambda_{i2}(x_2+x_k)+\lambda_{i3}x_3+\cdots+\lambda_{im}x_m.$$
The monomials containing $x_1x_2x_k$ must be obtained from multiplying by factor $(x_1+x_k)^2$ or $(x_2+x_k)^2$, which vanishes. It is similar to verify Claim (3).

\vskip .1cm
So by the above three claims, the two sets $\{(\lambda_{1i}, \cdots, \lambda_{4i}): i=1, \cdots, m\}$ and $$\{(\lambda_{1i}, \cdots, \lambda_{4i})+(\lambda_{1j}, \cdots, \lambda_{4j}): i-j\equiv1\ \text{or}\ m-1\pmod{m}\}$$ occupy $2m$ different non-zero elements in $\Z_2^4$, which is impossible when $m\ge8$.

\vskip .1cm
 Actually, $m=8$ is the precise lower bound satisfying $s_{\Bbb R}(C^4(m))<m-4$ for the case of $n=4$. We can give the specific matrices determined by the characteristic functions of $C^4(7)$ and $C^4(8)$ as follows:
\begin{displaymath}
\left(
  \begin{array}{ccccccc}
    1 & 0 & 0 & 0 & 1 & 1 & 0 \\
    0 & 1 & 0 & 0 & 0 & 1 & 1 \\
    0 & 0 & 1 & 0 & 1 & 1 & 1 \\
    0 & 0 & 0 & 1 & 1 & 0 & 1 \\
  \end{array}
\right)
\end{displaymath} for $C^4(7)$, and
\begin{displaymath}
\left(
         \begin{array}{cccccccc}
           1 & 0 & 0 & 0 & 0 & 1 & 0 & 1 \\
           0 & 1 & 0 & 0 & 0 & 0 & 0 & 1 \\
           0 & 0 & 1 & 0 & 0 & 0 & 1 & 1 \\
           0 & 0 & 0 & 1 & 0 & 1 & 0 & 1 \\
           0 & 0 & 0 & 0 & 1 & 0 & 1 & 1 \\
         \end{array}
\right)
\end{displaymath} for $C^4(8)$, respectively. Note that the above matrix for $C^4(7)$ also means that
\begin{align*}\sigma_4^{C^4(7)}(x_1, ..., x_7)=(x_1+x_5+x_6)(x_2+x_6+x_7)(x_3+x_5+x_6+x_7)(x_4+x_5+x_7)\end{align*} in
 $\mathcal{E}_{\Bbb R}(C^4(7))$. Thus,  $s_{\Bbb R}(C^4(7))=s_{\Bbb R}(C^4(8))=3$.
\end{proof}

In the general case, we have:

\begin{proposition}
Let $n\geq 4$.
When $m\geq 2^{n-1}$, $s_{\Bbb R}(C^n(m))<m-n$.
\end{proposition}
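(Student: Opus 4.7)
The plan is to generalize the strategy of Lemma~\ref{c4} from $n=4$ to all $n\ge 4$. Suppose for contradiction that $s_{\Bbb R}(C^n(m))=m-n$ for some $m\ge 2^{n-1}$. By Theorem~\ref{real} there exist linear forms $\lambda_i=\sum_j\lambda_{ij}x_j$ ($i=1,\dots,n$) with $\sigma_n^{C^n(m)}(x_1,\dots,x_m)=\lambda_1\cdots\lambda_n$ in $\mathcal{E}_{\Bbb R}(C^n(m))$. Writing $\mu_j=(\lambda_{1j},\dots,\lambda_{nj})\in\Z_2^n$ and expanding the product in $\wedge_{\Z_2}[x_1,\dots,x_m]/\mathcal{I}_{C^n(m)}$ (where $x_i^2=0$), the coefficient of $x_{i_1}\cdots x_{i_n}$ in $\lambda_1\cdots\lambda_n$ equals $\det[\mu_{i_1},\dots,\mu_{i_n}]\in\Z_2$. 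Comparing with the coefficient in $\sigma_n^{C^n(m)}$, we conclude that $\{\mu_{i_1},\dots,\mu_{i_n}\}$ forms a $\Z_2$-basis of $\Z_2^n$ whenever $\{i_1,\dots,i_n\}$ is a facet of $C^n(m)$.

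I will then prove four claims, with indices taken cyclically modulo $m$ (so $\mu_{m+1}:=\mu_1$):
(A) $\mu_i\ne 0$ for all $i$;
(B) $\mu_i\ne\mu_j$ for $i\ne j$;
(C) $\mu_i+\mu_{i+1}\ne\mu_k$ for all $k\ne i,i+1$;
(D) $\mu_i+\mu_{i+1}\ne\mu_k+\mu_{k+1}$ for any two cyclic pairs with $\{i,i+1\}\cap\{k,k+1\}=\varnothing$.
Each claim asserts that a certain subset sum $\sum_{j\in T}\mu_j$ with $|T|\le 4$ is nonzero. If this sum vanished, the vectors $\{\mu_j\}_{j\in T}$ would be $\Z_2$-dependent, and so $\{\mu_j\}_{j\in S}$ would be dependent for any facet $S\supseteq T$, contradicting the basis property. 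Hence it suffices to exhibit, for every small subset $T$ appearing in (A)--(D), a facet of $C^n(m)$ containing $T$.

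The combinatorial input is Gale's evenness condition: $S\subset[m]$ with $|S|=n$ is a facet of $C^n(m)$ iff its complement $\{j_1<\cdots<j_{m-n}\}$ has odd consecutive differences, equivalently iff the maximal runs of consecutive integers in $S$ not touching $1$ or $m$ all have even length. Using this:
(A) $\{i\}$ lies in any facet containing $i$.
(B) Since $n\ge 4$, $C^n(m)$ is $2$-neighborly, so any pair $\{i,j\}$ lies in some facet.
(C)--(D) For any disjoint pair of cyclically consecutive pairs, one builds a facet by extending the two given pairs with further disjoint consecutive pairs when $n$ is even, or with further consecutive pairs together with a singleton endpoint block ($\{1\}$ or $\{m\}$) chosen to make the block sizes consistent with Gale when $n$ is odd. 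The cyclic case (where one of the pairs is $\{m,1\}$) is handled by facets of the form $\{1,m\}\cup\text{(interior blocks)}$ for even $n$, and $\{1,2\}\cup\cdots\cup\{m\}$ or $\{1\}\cup\cdots\cup\{m-1,m\}$ for odd $n$. Since $m\ge 2^{n-1}\ge 2n$ for $n\ge 4$, there is always ample room to place the extra blocks.

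With claims (A)--(D) established, the $2m$ elements
\[
\mu_1,\dots,\mu_m,\ \mu_1+\mu_2,\ \mu_2+\mu_3,\ \dots,\ \mu_{m-1}+\mu_m,\ \mu_m+\mu_1
\]
are all nonzero and pairwise distinct in $\Z_2^n$, forcing $2m\le 2^n-1$ and thus $m\le 2^{n-1}-1$, contradicting $m\ge 2^{n-1}$. Together with the standard bound $s_{\Bbb R}(C^n(m))\le m-n$, this yields the desired strict inequality. The main obstacle will be the case-by-case construction of Gale facets extending a prescribed three- or four-element subset, especially when $n$ is odd (so no facet decomposes purely into consecutive pairs) and when the cyclic pair $\{m,1\}$ appears; the parity constraints force the extending blocks to be arranged carefully, but the explicit Gale construction above handles each case uniformly.
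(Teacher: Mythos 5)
Your proof is correct and follows the same strategy the paper uses: reduce to the counting argument of Lemma~\ref{c4} via the determinant criterion $\det[\mu_{i_1},\dots,\mu_{i_n}]=1$ on facets, prove the four non-vanishing claims on partial sums $\sum_{j\in T}\mu_j$, and deduce $2m\le 2^n-1$, hence $m\le 2^{n-1}-1$. Where you genuinely differ from the paper is in how the face conditions supporting claims (B)--(D) are supplied. The paper invokes Shephard's theorem \cite{s} to assert that the criterion for a $4$-element subset to be a $3$-face of $C^n(m)$ is the \emph{same} conditions (1)--(2) for all $n\ge 4$, and then says Lemma~\ref{c4} transfers verbatim. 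You instead appeal directly to Gale's evenness condition and construct, for each subset $T$ of size at most $4$ appearing in the claims, a facet $S\supseteq T$. The two approaches buy different things: the paper's route is shorter but leans on an ``if and only if'' reading of Shephard's characterization that is stronger than what the argument actually needs (only the implication ``(1) or (2) $\Rightarrow$ $3$-face'' is used), whereas your route is self-contained and uses precisely what is needed, at the cost of explicit casework (parity of $n$, cyclic pair $\{m,1\}$, blocks touching the endpoints). Two small remarks: your claim (A) is the only one the paper leaves implicit --- it is of course forced since every $\mu_i$ sits in some facet column basis --- and you should also note, as you implicitly do, that cases like $\mu_i+\mu_{i+1}=\mu_{i+2}+\mu_{i+3}$ (overlapping but not disjoint cyclic pairs, e.g.\ $\mu_1+\mu_2$ vs.\ $\mu_2+\mu_3$) reduce to claim (B); you only listed the disjoint case in (D), and the degenerate cases need to be dispatched for the $2m$-element conclusion. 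These reductions are routine, so the proof goes through.
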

\begin{proof}
With the above notation, as shown in  \cite{s}, the criterion for the $3$-faces of $C^n(m)$ with $n\geq 4$ is compatible. This means that  $a=\{v_{i_1}, v_{i_2}, v_{i_3}, v_{i_4}\}$ with $i_1<\cdots< i_4$ satisfies either of the following conditions:
\begin{enumerate}
  \item $i_1=1$, $i_4=m$, $i_3-i_2=1$;
  \item $i_2-i_1=i_4-i_3=1$,
\end{enumerate}
if and only if $a$ is a $3$-face in $C^n(m)$.
So the argument of Lemma~\ref{c4} can be still applied to the case of higher dimensions.
\end{proof}

\begin{corollary}
Let $n\geq 4$.
If $m\geq 2^{n-1}$, then $s_{\Bbb C}(C^n(m))<m-n$.
\end{corollary}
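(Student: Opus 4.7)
The plan is to derive this corollary as an immediate consequence of the preceding Proposition together with the basic inequality $s_{\Bbb C}(P)\leq s_{\Bbb R}(P)$ recorded earlier in the paper. No new combinatorics on the face structure of $C^n(m)$ is needed at this stage; all the work about which $4$-element subsets are $3$-faces and the three ``claims'' about the columns $(\lambda_{1i},\dots,\lambda_{ni})$ has already been spent in establishing $s_{\Bbb R}(C^n(m))<m-n$.

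More precisely, I would argue as follows. First I would recall the inequality
\[
1\leq s_{\Bbb C}(P)\leq s_{\Bbb R}(P)\leq m-n
\]
valid for any simple $n$-polytope $P$ with $m$ facets, which was quoted as a Proposition in the Buchstaber invariant subsection. Next I would invoke the preceding Proposition, applied to $P=C^n(m)$ under the hypothesis $n\geq 4$ and $m\geq 2^{n-1}$, to obtain $s_{\Bbb R}(C^n(m))<m-n$. Chaining the two inequalities then yields $s_{\Bbb C}(C^n(m))\leq s_{\Bbb R}(C^n(m))<m-n$, which is the conclusion.

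There is no real obstacle here; the content of the corollary is purely the monotonicity $s_{\Bbb C}\leq s_{\Bbb R}$, which in turn reflects the geometric fact that every free subtorus action of $\mathbb{T}_{\Bbb C}^k\subset\mathbb{T}_{\Bbb C}^m$ on $\mathcal{Z}_P^{\Bbb C}$ restricts to a free action of $\mathbb{T}_{\Bbb R}^k=(\mathbb{Z}_2)^k\subset\mathbb{T}_{\Bbb R}^m$ on the fixed set $\mathcal{Z}_P^{\Bbb R}$ of complex conjugation. Thus the real criterion via Theorem~\ref{real} and the combinatorics of $3$-faces of $C^n(m)$, already exploited in the proof of the Proposition, transfers without additional work to the complex case.
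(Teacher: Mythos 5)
Your proof is correct and matches the paper's intent exactly: the corollary is an immediate consequence of the preceding Proposition (giving $s_{\Bbb R}(C^n(m))<m-n$ for $m\geq 2^{n-1}$, $n\geq 4$) together with the earlier-stated inequality $s_{\Bbb C}(P)\leq s_{\Bbb R}(P)$, and the paper supplies no further argument.
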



\end{document}